\newtheorem{theorem}{Theorem}[section]
\newtheorem{corollary}[theorem]{Corollary}
\newtheorem{lemma}[theorem]{Lemma}
\newtheorem{remark}[theorem]{Remark}
\newtheorem{proposition}[theorem]{Proposition}
\newcommand{\R}{\mathbb{R}}
\newcommand{\N}{\mathbb{N}}
\numberwithin{equation}{section}
\renewcommand{\le}{\leqslant}
\renewcommand{\leq}{\leqslant}
\renewcommand{\ge}{\geqslant}
\renewcommand{\geq}{\geqslant}
\def\H{\mathcal{H}}
\def\N{\mathbb N}
\def\R{\mathbb R}
\def\S{\mathcal S}
\def\I{\mathcal J}
\def\eps{\varepsilon}
\def\a{\alpha}
\def\ov{\overline}
\def\bal{\begin{aligned}}
\def\eal{\end{aligned}}
\def\F{{\mathcal{F}_{s,t}}}
\def\Feps{{\mathcal{F}_{s,t}^{\,\varepsilon}}}
\def\G{\mathcal{G}}
\def\l{\lambda}
\title[Nonlocal isoperimetric problems]{Nonlocal isoperimetric problems}
\author[A. Di Castro]{Agnese Di Castro}
\email[Agnese Di Castro]{\href{mailto:dicastro@mail.dm.unipi.it}{dicastro@mail.dm.unipi.it}}
\author[M. Novaga]{Matteo Novaga}
\email[Matteo Novaga]{\href{mailto:novaga@dm.unipi.it}{novaga@dm.unipi.it}}
\author[B. Ruffini]{Berardo Ruffini}
\email[Berardo Ruffin]{\href{mailto:berardo.ruffini@ujf-grenoble.fr}{berardo.ruffini@ujf-grenoble.fr}}
\author[E. Valdinoci]{Enrico Valdinoci}
\email[Enrico Valdinoci]{\href{mailto:enrico.valdinoci@wias-berlin.de}{enrico.valdinoci@wias-berlin.de}}
\address[A. Di Castro, M. Novaga]
{Dipartimento di Matematica, Universit\`a di Pisa
\newline  Largo Bruno Pontecorvo,~5
\\ 56127 Pisa, Italy}
\address[B. Ruffini]
{Universit\'e Joseph Fourier 
\\ BP 53
\\ 38041, Grenoble cedex 39, France}
\address[E. Valdinoci]
{Weierstrass Institute
\\ Mohrenstrasse~39
\\ 10117 Berlin, Germany}
\begin{document}

\begin{abstract}
We characterize the volume-constrained minimizers of a nonlocal free energy 
given by the difference of the $t$-perimeter and 
the $s$-perimeter, with $s$ smaller than $t$. 
Exploiting the quantitative fractional isoperimetric inequality,
we show that balls are the unique minimizers if the volume is sufficiently small,
depending on $t-s$, while the existence vs. nonexistence of minimizers 
for large volumes remains open.
We also consider the corresponding isoperimetric problem 
and prove existence and regularity of minimizers for all $s,\,t$.
When $s=0$ this problem reduces to the fractional isoperimetric problem,
for which it is well known that balls are the only minimizers.
\end{abstract}

\maketitle
\tableofcontents

\section{Introduction}\label{section0}
In this paper we deal with two nonlocal isoperimetric problems, which are closely related one with the other. 
To introduce them, we recall the definition and
some properties of the fractional perimeter. Given a number $\alpha\in(0,1)$, for a measurable set $E\subset\R^N$,
the fractional
perimeter $P_\alpha(E)$ is defined as the (squared) $H^{\alpha/2}$-seminorm of the
characteristic function of $E$, that is,
\[
P_\alpha(E):=\left[
\chi_E\right]^2_{H^{\alpha/2}}=\int_{\R^N}\int_{\R^N}\frac{|\chi_E(x)-\chi_E(y)|^2}{|x-y|^{N+\alpha}}dx\,
dy=\int_E\int_{E^c}\frac{
dx\,dy}{|x-y|^{N+\alpha}}.
\]
The notion of fractional perimeter has been introduced in~\cite{Vis, CRS10}
and it has been extensively
studied in several recent papers 
(see for instance~\cite{FMM11, SV12, SV13, CV13, labandadei5, delpino}
and references therein). In particular, according \cite[Theorem 1]{CV11}
(see also ~\cite{Bo, davila, ADM11}),
we have that the fractional perimeter~$P_\alpha$, if suitably renormalized, approaches
the classical perimeter~$P$ as~$\alpha\nearrow1$. More precisely, if
$\partial E$ is of class $C^{1,\gamma}$ for some $\gamma>0$, we have 
\begin{equation}\label{s1}
\lim_{\alpha \to 1^-} (1-\alpha) P_\alpha (E)=N\omega_{N} P(E),
\end{equation}
where $\omega_N$ denotes the volume of the $N$-dimensional ball of radius $1$.
On the other hand, 
the fractional perimeter~$P_\alpha$ approaches the Lebesgue measure~$|\cdot|$
as~$\alpha\searrow0$, that is, 
\begin{equation}\label{s0}
\lim_{\alpha \to 0^+} \alpha P_\alpha (E)=N \omega_N |E|,
\end{equation}
if $P_{\bar{\alpha}}(E)<+\infty$ for some $\bar{\alpha}>0$ (see \cite{Maz} and~\cite[Corollary 2.6]{DFPV}).

\smallskip

In the first part of the paper
we investigate the minimum problem:
\begin{equation}\label{problem}
\min_{|E|=m}\F (E)\qquad m\in (0,+\infty)\,,
\end{equation}
where 
\begin{equation}\label{Fst}
\F(E):=
\begin{cases}
(1-t)P_t(E)-sP_s(E) & \text{if $0<s<t<1$}\\
 \quad& \quad\\
N\omega_{N}P(E)-sP_s(E) & \text{if $0<s<t=1$}\\
\quad& \quad\\
(1-t)P_t(E)-N\omega_{N}|E| & \text{if $0=s<t<1$}\\
\quad &\quad\\
N\omega_{N}P(E)-N\omega_{N}|E|&\text{if $s=0$ and $t=1$}.
\end{cases}
\end{equation}

\noindent
Notice that thanks to \eqref{s0} and \eqref{s1}, for all $s,t\in(0,1)$ 
we have 
\begin{equation}\label{F.F}
\F (E) \underset{t\to1}{\to}
\mathcal F_{s,1}(E)\underset{s\to0}{\to} \mathcal F_{0,1}(E)
\quad \text{ and }\quad
\F(E) \underset{s\to0}{\to} \mathcal F_{0,t}(E)
\underset{t\to1}{\to}\mathcal F_{0,1}(E)\,,
\end{equation}
that is, $\F$ depends continuously on $s,t\in [0,1]$, with $s<t$.

Problem \eqref{problem} is reminiscent of recent results about isoperimetric problems with nonlocal
competing term arising in mathematical physics, where the  functionals take the form
\[
 \mathcal F=P+\mathcal{NL}
\]
being $P$ the perimeter and $\mathcal{NL}$ the nonlocal term,
% . For instance in \cite{KMP13_I,KMP13_II} it is investigated the
% where 
% \[
%  \mathcal{NL}(E)= \int_{\R^N}\int_{\R^N}\frac{dx\,dy}{|x-y|^\alpha}.
% \]
see for instance \cite{KMP13_I,KMP13_II,CS13,GNR13,FJ,labandadei5,BC14,J}.
We mention in particular the works by Kn\"upfer and Muratov
\cite{KMP13_I,KMP13_II} where the authors consider the case
where the nonlocal term is given by a Coulombic potential. 

In our framework, the energy in~\eqref{F.F}
presents a competing effect between the term~$P_t$
which has the tendency to ``aggregate'' the sets into balls,
and~$P_s$, which acts in the opposite way. We will see that, at small scales,
the aggregating effect is predominant, but this does
not occur at large scales.

More precisely, as a first result we show that minimizers exist 
and are regular at least for small volumes. 

\begin{theorem}\label{thexist}
For any $0\le s<t\le 1$, there exists 
$\bar m_0=\bar m_0(N, t-s)>0$ such that for all $m\in(0,\bar m_0)$, problem 
\eqref{problem} has a minimizer $F\subset \mathbb{R}^N$.
Moreover $F$ is bounded with boundary of class $C^{1,\beta}$, for some $\beta=\beta(N,t-s)\in (0,1)$,
outside a closed singular set of Hausdorff dimension 
at most $N-2$ (respectively $N-8$ if $t=1$).
\end{theorem}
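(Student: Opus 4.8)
The argument has three ingredients — existence of a minimizer, its interior regularity, and its boundedness — and it follows the by-now standard scheme for the volume-constrained minimization of a perimeter functional perturbed by a lower-order nonlocal term (cf.\ \cite{KMP13_I,KMP13_II}). The role of the smallness of $m$ is best seen after the dilation $E\mapsto m^{1/N}E$, which turns \eqref{problem} into the minimization of $(1-t)P_t(\cdot)-s\,m^{(t-s)/N}P_s(\cdot)$ at a fixed volume: a small perturbation of the pure fractional isoperimetric problem. The case $s=0$ is immediate, since then $\F$ differs from a fixed multiple of $P_t$ (resp.\ $P$) by the additive constant $-N\omega_N m$ on $\{|E|=m\}$, so balls minimize; from now on $0<s<t\le1$, and when $t=1$ one reads $P_t$ below as the classical perimeter. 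First I would record the interpolation estimate $P_s(E)\le C(N,s,t)\,|E|^{(t-s)/t}P_t(E)^{s/t}$, obtained by splitting the double integral defining $P_s$ at a scale $r>0$ — the part with $|x-y|<r$ is $\le r^{t-s}P_t(E)$ and the part with $|x-y|\ge r$ is $\le c_N s^{-1}|E|r^{-s}$ — and optimizing in $r$ (for $t=1$ the same splitting, using $|E\triangle(E-h)|\le|h|\,P(E)$, gives $P_s(E)\le C|E|^{1-s}P(E)^s$). By Young's inequality this yields $\F(E)\ge\frac{1-t}{2}P_t(E)-C_*|E|$, hence $e(m):=\inf\{\F(E):|E|=m\}$ is finite, $e(m)\le\F(B_m)$ (with $B_m$ the ball of volume $m$), and every minimizing sequence $E_k$ has $P_t(E_k)$ bounded in terms of $N,t-s,m$.

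Since $\{\chi_{E_k}\}$ is then bounded in $H^{t/2}(\R^N)$ with $|E_k|=m$, a nucleation lemma based on the fractional isoperimetric inequality gives points $x_k$ and a radius $r_0=r_0(N,t-s,m)$ with $|E_k\cap B_{r_0}(x_k)|\ge c_0>0$; after translation, $\chi_{E_k}\to\chi_F$ in $L^1_{\mathrm{loc}}$ with $|F|>0$. The point is to exclude loss of mass at infinity, and for this I would prove the \emph{strict} subadditivity $e(m)<e(\lambda)+e(m-\lambda)$ for every $\lambda\in(0,m)$, provided $m<\bar m_0(N,t-s)$. When $\lambda$ and $m-\lambda$ are both comparable to $m$ this follows from the isoperimetric lower bound $e(\mu)\ge(1-t)P_t(B_\mu)-C\mu^{(N-s)/N}$ (deduced, as above, from $P_t(E)\ge P_t(B_\mu)=P_t(B_1)\mu^{(N-t)/N}$ and the interpolation estimate) together with the strict concavity of $\mu\mapsto\mu^{(N-t)/N}$: since $(N-t)/N<(N-s)/N$ and $m$ is small, the concavity gain $\sim m^{(N-t)/N}$ beats the $\mu^{(N-s)/N}$ terms. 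When $\lambda$ is small it follows instead from $e(\lambda)\ge c\,\lambda^{(N-t)/N}$ and the local Lipschitz estimate $|e(m)-e(m-\lambda)|\le C_m\lambda$ (from dilating near-optimal sets), again because $(N-t)/N<1$. By the concentration-compactness alternative, dichotomy with parameter $\lambda$ would force $e(m)=\liminf_k\F(E_k)\ge e(\lambda)+e(m-\lambda)$, a contradiction; hence $|F|=m$. Finally $P_t(F)\le\liminf_kP_t(E_k)$ by lower semicontinuity while $P_s(E_k)\to P_s(F)$ (the splitting estimate shows $P_s$ is continuous along $L^1$-convergent sequences with equibounded $t$-perimeter), so $\F(F)\le\liminf_k\F(E_k)=e(m)$ and $F$ is a minimizer.

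For regularity, comparing $F$ with $F\cup B_\rho(x)$ and with $F\setminus B_\rho(x)$ at small $\rho$ — where the $sP_s$ contribution is $O(\rho^{N-s})=o(\rho^{N-t})$ — yields the usual density estimates, which make $F$ a $\Lambda$-minimizer of the $t$-perimeter (resp.\ of the perimeter when $t=1$) with gauge $r\mapsto C r^{N-s}$, of lower order since $N-s>N-t$ (resp.\ $N-s>N-1$). The regularity theory of Caffarelli–Roquejoffre–Savin \cite{CRS10} for (almost-)minimal nonlocal surfaces then gives $\partial F\in C^{1,\beta}$ with $\beta=\beta(N,t-s)\in(0,1)$ outside a closed set of Hausdorff dimension $\le N-2$; when $t=1$ the De Giorgi–Tamanini theory for almost minimizers of the perimeter gives the same with $N-8$. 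Boundedness follows by a standard truncation argument: if $F$ were unbounded, comparing $F$ with the volume-fixing rescalings of $F\cap B_R$ and estimating the change of $\F$ — the ``cut'' term $\int_{F\cap B_R}\int_{F\setminus B_R}|x-y|^{-N-t}\,dx\,dy$ being controlled by $|F\setminus B_R|$ and the $t$-perimeter of nearby truncations — leads to a differential inequality for $u(R):=|F\setminus B_R|$ which, together with $u(R)\to0$, forces $u\equiv0$ for large $R$.

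The main obstacle is the strict subadditivity of $e$ in the second step: it is precisely here that the smallness of the volume (equivalently, of the rescaled coupling $m^{(t-s)/N}$) and the gap $(N-t)/N<(N-s)/N$ are used in an essential way, and it is the reason the threshold $\bar m_0$ and the Hölder exponent $\beta$ depend on $t-s$.
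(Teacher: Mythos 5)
Your proposal is essentially correct but follows a genuinely different route to existence than the paper does. Where you pass through a concentration--compactness alternative and prove strict subadditivity of the isovolumetric function $e(m)$ (via the concavity gain of $\mu\mapsto\mu^{(N-t)/N}$ combined with the gap $(N-t)/N<(N-s)/N$), the paper proceeds constructively: it introduces a \emph{non-optimality criterion} (Proposition~\ref{nonoptimality}) and splits the analysis of a minimizing sequence $F_k$ according to whether the gluing inequality~\eqref{no-hp1} holds at every cut radius $\rho$ or fails at some $\rho$. In the first case an integro-differential inequality (Lemma~\ref{ID}, fed by Lemma~\ref{Aux}) shows $F_k$ already lives in a ball of controlled radius; in the second case the quantitative fractional isoperimetric deficit estimate (Lemma~\ref{deficit}, via \cite{labandadei5}) shows $F_k$ is $L^1$-close to $B_m$ and the non-optimality criterion then produces a better competitor $G_k$ in a universal ball. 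This directly localizes the minimizing sequence and avoids the concentration--compactness machinery entirely, at the price of invoking the quantitative fractional isoperimetric inequality. Your route buys a cleaner conceptual structure and avoids the quantitative isoperimetric input, but the dichotomy-exclusion step still requires a cut-and-paste argument with control of the cross-interaction in $P_t$ and $P_s$ across a well-chosen radius (essentially the content of the paper's Lemmas~\ref{Aux} and~\ref{ID}), so neither approach is strictly lighter. For the regularity and boundedness, your argument matches the paper's: density estimates from cut-and-paste comparisons, almost-minimality of $P_t$ with gauge $Cr^{N-s}$ (lower order since $N-s>N-t$), and then the regularity theory of \cite{CG11,labandadei5} (for $t<1$; \cite{bombieri,tamanini} for $t=1$). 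Two points worth tightening: the citation of \cite{CRS10} gives regularity for \emph{exact} nonlocal minimizers, and what you need here is the extension to $\omega$-minimizers from \cite{CG11,labandadei5}; and in the strict subadditivity argument you should make explicit that the ``small $\lambda$'' and ``$\lambda$ comparable to $m$'' regimes overlap with constants depending only on $N,t-s$, so that the resulting threshold $\bar m_0$ is uniform in $\lambda\in(0,m)$ --- the Lipschitz bound $|e(m)-e(m-\lambda)|\le C_m\lambda$ has $C_m\sim m^{-t/N}$, which when matched against $e(\lambda)\gtrsim\lambda^{(N-t)/N}$ does give a threshold $\lambda<\theta_0 m$ with $\theta_0$ dimensional, so the patching works, but it needs to be said.
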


Thanks to the fractional isoperimetric inequality in a quantitative form,
we also show that the 
the minimizer found in Theorem~\ref{thexist}
is necessarily a ball, if the volume $m$ is sufficiently small.

\begin{theorem}\label{thmain}
For any $0\leq s<t\leq 1$ and $\bar m_0$ as in Theorem~\ref{thexist}, there exists $\bar m_1=\bar m_1(N, t-s)\in (0,\bar m_0]$ 
such that for all $m\in (0,\bar m_1)$, the only minimizer 
of problem \eqref{problem} is given by the ball of measure $m$.
\end{theorem}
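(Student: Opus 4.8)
The plan is to exploit the quantitative fractional isoperimetric inequality to show that, once the volume is small enough, any competitor that is not a ball has strictly larger energy than the ball. Let $B$ denote the ball of volume $m$, and recall the scaling of the two terms: if $E$ has volume $m$ and we write $E = m^{1/N} \widetilde E$ with $|\widetilde E| = 1$, then $P_t(E) = m^{(N-t)/N} P_t(\widetilde E)$ and $P_s(E) = m^{(N-s)/N} P_s(\widetilde E)$ (with the obvious modifications when $t=1$ or $s=0$). Since $s < t$, the exponent $(N-s)/N$ is \emph{strictly larger} than $(N-t)/N$, so the $P_t$-term dominates the $P_s$-term as $m \to 0^+$. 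Concretely, after rescaling to unit volume, minimizing $\F$ over $|E|=m$ is equivalent (up to a positive multiplicative constant) to minimizing $P_t(\widetilde E) - c\, m^{(t-s)/N} P_s(\widetilde E)$ (resp. the appropriate limiting forms) over $|\widetilde E|=1$, where the coupling parameter $\e := c\,m^{(t-s)/N} \to 0$ as $m\to 0$.

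The key steps, in order, are as follows. First I would invoke Theorem~\ref{thexist}: for $m < \bar m_0$ a minimizer $F$ exists; after rescaling we may assume $|F| = 1$ and $F$ minimizes $G_\e(E) := P_t(E) - \e P_s(E)$ (suitably interpreted) among sets of unit volume. Second, I would use the quantitative fractional isoperimetric inequality: there is a constant $c_0 = c_0(N,t) > 0$ such that $P_t(E) \ge P_t(B_1)\bigl(1 + c_0\, A(E)^2\bigr)$, where $A(E)$ is the Fréchet asymmetry index $A(E) = \inf_x |E \triangle (x + B_1)|$, with $|B_1| = 1$; when $t=1$ this is the classical Fusco--Maggi--Pratelli inequality, and for $t\in(0,1)$ it is the fractional analogue (from the references cited in the excerpt). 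Third, I need a \emph{uniform} upper bound on the $s$-perimeter in terms of the asymmetry, of the form $P_s(E) \le P_s(B_1) + C\, A(E)$ for competitors $F$ with $P_t(F) \le P_t(B_1)\cdot 2$, say — i.e., for near-optimal competitors. Such a bound should follow from a continuity/interpolation estimate for the $s$-perimeter along with the a priori density estimates and diameter bounds proved in the course of Theorem~\ref{thexist} (which confine minimizers to a fixed large ball), so that the $P_s$ term is controlled by the symmetric difference $|F \triangle B_1|$, hence by $A(F)$.

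Combining these, for the rescaled minimizer $F$ we get
\begin{equation*}
G_\e(F) \;\ge\; P_t(B_1)\bigl(1 + c_0 A(F)^2\bigr) - \e\bigl(P_s(B_1) + C\,A(F)\bigr)
\;=\; G_\e(B_1) + c_0 P_t(B_1) A(F)^2 - \e C\, A(F).
\end{equation*}
Since $F$ is a minimizer, $G_\e(F) \le G_\e(B_1)$, which forces $c_0 P_t(B_1) A(F)^2 \le \e C A(F)$, hence $A(F) \le \e C / (c_0 P_t(B_1))$. Thus for $\e$ small — equivalently $m < \bar m_1$ for a suitable threshold $\bar m_1 = \bar m_1(N, t-s) \le \bar m_0$ — the asymmetry $A(F)$ is as small as we like. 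The final step is a \emph{rigidity / bootstrap} argument upgrading "almost a ball" to "exactly a ball": because $F$ is also a minimizer (not merely a near-minimizer), and $A(F)$ is small, the $C^{1,\beta}$ regularity from Theorem~\ref{thexist} together with the Euler--Lagrange equation — which reads (nonlocal mean curvature)$_t$ $- \e$(nonlocal mean curvature)$_s$ $=$ const on $\partial F$ — lets one apply a quantitative version of the Alexandrov-type moving-plane or the second-variation argument to conclude $F = B_1$. Alternatively, one shows $\partial F$ is a $C^{1,\beta}$ normal graph of small norm over $\partial B_1$ and feeds this into the fact that the ball is the \emph{strict} $W^{1,\infty}$-local minimizer of $P_t$ under volume constraint (with a coercivity/second-variation gap that beats the $\e$-perturbation from $P_s$), yielding equality only at the ball.

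I expect the \textbf{main obstacle} to be the third step and the final rigidity step rather than the scaling heuristics: specifically, obtaining the one-sided control $P_s(F) \le P_s(B_1) + C A(F)$ with a constant $C$ uniform over the relevant family of (rescaled) minimizers — this requires the uniform diameter and density bounds from the proof of Theorem~\ref{thexist} and a careful nonlocal interpolation estimate — and then promoting smallness of $A(F)$ to the exact identification $F = B_1$, where one must ensure the stability/coercivity constant for $P_t$ at the ball strictly dominates the linear-in-$\e$ loss coming from the defocusing $P_s$ term. Handling the boundary cases $t=1$ (classical perimeter, $C^{1,\beta}$ outside an $(N-8)$-dimensional set, classical quantitative isoperimetric inequality) and $s=0$ (the $P_s$ term becomes $N\omega_N|E|$, which is \emph{constant} on the constraint, so the problem trivially reduces to the pure $t$-perimeter isoperimetric problem and the ball is the unique minimizer for all $m$) should be routine once the main case is settled.
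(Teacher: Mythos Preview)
Your overall architecture matches the paper's: rescale so the volume constraint is fixed and the coupling becomes $\eps=m^{(t-s)/N}\to 0$; show the minimizer is close to a ball; then use a local rigidity/second-variation argument at the ball to conclude it \emph{is} a ball. Your ``alternative'' route at the end --- write $\partial F_\eps$ as a $C^{1,\beta}$ normal graph over $\partial B$ and exploit coercivity of the second variation of $P_t$ to absorb the $\eps P_s$ term --- is precisely what the paper does. However, the paper's implementation differs from your primary route in ways that matter.

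First, your step~3 inequality $P_s(F)\le P_s(B_1)+C\,A(F)$ is both hard to justify (Lipschitz dependence of a nonlocal perimeter on the $L^1$ asymmetry is not a standard fact, and the interpolation you mention yields at best an exponent $1-\tfrac st<1$ on $|F\Delta B|$) and, more importantly, unnecessary: it only delivers $A(F)\lesssim \eps$, which still leaves the entire rigidity step undone. The paper bypasses this by getting $L^1$ closeness directly from $\F(F)\le\F(B)$ together with the quantitative $t$-isoperimetric inequality (your step~2 alone), without ever bounding $P_s(F)-P_s(B)$ in terms of $A(F)$.

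Second --- and this is where the actual work lies --- the paper upgrades $L^1$ closeness to $C^1$ closeness by using the $\omega$-minimality of $F_\eps$ for $P_t$ (the regularity theory behind Theorem~\ref{thexist}) plus the density estimates, which force $\partial F_\eps\to\partial B$ in Hausdorff distance and then in $C^1$. Once $\partial F_\eps=\{(1+u_\eps(x))x:x\in\partial B\}$ with $\|u_\eps\|_{C^1}$ small, the paper invokes a two-sided Fuglede-type expansion (their Theorem~\ref{teodelta}): for every $\alpha\in(0,1)$,
\[
c_1\Big([u]^2_{H^{\frac{1+\alpha}{2}}(\partial B)}+\alpha P_\alpha(B)\|u\|_{L^2}^2\Big)
\ \le\ P_\alpha(E)-P_\alpha(B)\ \le\ c_2\,[u]^2_{H^{\frac{1+\alpha}{2}}(\partial B)}.
\]
Using the lower bound with $\alpha=t$ and the upper bound with $\alpha=s$, the minimality inequality $(1-t)(P_t(F_\eps)-P_t(B))\le \eps\, s(P_s(F_\eps)-P_s(B))$ becomes
\[
c_1(1-t)[u_\eps]^2_{H^{\frac{1+t}{2}}}\ \le\ \eps\, c_2\, s\,[u_\eps]^2_{H^{\frac{1+s}{2}}}.
\]
The decisive lemma you do not mention is the spectral comparison on the sphere (their inequality~\eqref{H}):
\[
(1-s)[u]^2_{H^{\frac{1+s}{2}}(\partial B)}\ \le\ c_N\,(1-t)[u]^2_{H^{\frac{1+t}{2}}(\partial B)},
\]
which comes from the eigenvalue bounds for the hypersingular Riesz operator $\I_\alpha$ on $\partial B$. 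Plugging this in gives $c_1\le c_N\,\eps\, c_2\,\tfrac{s}{1-s}$ unless $u_\eps\equiv 0$, forcing $F_\eps=B$ for $\eps$ small. This is the quantitative ``coercivity beats perturbation'' statement you allude to; the point is that both sides are controlled by the \emph{same} seminorm up to a universal constant, so one needs the seminorm comparison~\eqref{H}, not merely that the ball is a strict local minimizer of $P_t$.

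In short: drop the $A(F)$ detour, go straight to $C^1$ graphicality via $\omega$-minimality, and identify the two missing analytic ingredients --- the two-sided Fuglede expansion of $P_\alpha$ near the ball, and the inequality $(1-s)[u]^2_{H^{(1+s)/2}}\le c_N(1-t)[u]^2_{H^{(1+t)/2}}$ on $\partial B$.
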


We stress that our estimates, similarly to those in \cite{labandadei5}, 
depend only on a lower bound on the difference $t-s$,
and pass to the limit as $s\to 0$ and $t\to 1$
(as a matter of fact, the normalizing constants
appearing in~\eqref{Fst} has exactly the purpose of
making our estimates stable as $s\to 0$ and $t\to 1$).

Moreover, as far as we know, our results are new even in the case~$t=1$.

We also point out that we do not know if a minimizer exists for any volume $m$. 
However, we show that {\em a minimizer cannot be a ball if $m$ is
large enough} (see Theorem \ref{prounstable}), so the minimization problem
can be in general quite rich.

\smallskip

The second problem we consider is the following 
generalized isoperimetric problem:
\begin{equation}\label{isoperintro}
\min_{E\subset\R^N}\,\widetilde\F(E)
\qquad 0\le s<t\le 1\,,
\end{equation}
where
\[
\widetilde\F(E):=
\begin{cases}
\dfrac{\left((1-t)P_t(E)\right)^{N-s}}{\left(sP_s(E)\right)^{N-t}} & \text{if $0<s<t<1$}\\
 \quad& \quad\\
\dfrac{(N\omega_{N}P(E))^{N-s}}{\left(sP_s(E)\right)^{N-1}} & \text{if $0<s<t=1$}\\
\quad& \quad\\
\dfrac{(1-t)P_t(E)^{N}}{(N\omega_{N}|E|)^{N-t}} & \text{if $0=s<t<1$}\\
\quad &\quad\\
N\omega_{N}\frac{P(E)^{N}}{|E|^{N-1}}&\text{if $s=0$ and $t=1$}.
\end{cases}
\]
Again, thanks to \eqref{s0} and \eqref{s1} we see that
$$
\widetilde{\F} (E) \underset{t\to1}{\to}  
\widetilde{\mathcal F_{s,1}}(E)\underset{s\to0}{\to}  \widetilde{\mathcal F_{0,1}}(E) \quad \text{ and }\quad
\widetilde{\F}(E) \underset{s\to0}{\to}
\widetilde{\mathcal F_{0,t}}(E)\underset{t\to1}{\to}
\widetilde{\mathcal F_{0,1}}(E).
$$
Since, for $s=0$ and $t=1$, problem \eqref{isoperintro} reduces to the classical isoperimetric one, we can
think to it as a generalized isoperimetric problem for fractional perimeters. 

We now state our main result about problem \eqref{isoperintro}.
\begin{theorem}\label{mainisopintro}
For any $0\le s<t\le 1$, there exists a nontrivial minimizer 
$E_{s,t}$ of problem \eqref{isoperintro}. Moreover 
$E_{s,t}$ is bounded and has boundary of class $C^{1,\beta}$,
for some $\beta=\beta(N,t-s)\in (0,1)$, 
outside a closed singular set of Hausdorff dimension 
at most $N-2$ (respectively $N-8$ if $t=1$).
\end{theorem}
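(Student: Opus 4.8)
The plan is to exploit the scaling invariance of $\widetilde{\F}$ to reduce the unconstrained problem \eqref{isoperintro} to a volume-constrained one, and then run a standard direct method argument combined with the regularity theory already developed for Theorem \ref{thexist}. First I would observe that $\widetilde{\F}$ is invariant under dilations: since $P_\alpha(\lambda E)=\lambda^{N-\alpha}P_\alpha(E)$ and $|\lambda E|=\lambda^N|E|$, one checks directly that in each of the four cases the numerator and denominator scale by the same power of $\lambda$, so $\widetilde{\F}(\lambda E)=\widetilde{\F}(E)$. Hence the infimum in \eqref{isoperintro} equals the infimum over sets of a fixed measure, say $|E|=1$. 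On that class, minimizing $\widetilde{\F}$ is equivalent (by monotonicity of the functional in $P_t$ for fixed $P_s$, and vice versa) to a two-parameter constrained problem; more concretely, I would show that a minimizing sequence can be taken with $(1-t)P_t(E_n)$ bounded and $sP_s(E_n)$ bounded away from zero, so that $\widetilde{\F}(E_n)\to\inf\widetilde{\F}>0$ forces uniform control on $[\chi_{E_n}]_{H^{t/2}}$.

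Next I would establish compactness. The uniform bound on the $H^{t/2}$-seminorm of the characteristic functions, together with $|E_n|=1$, gives precompactness in $L^1_{\rm loc}$ via fractional Sobolev embedding; the only issue is loss of mass at infinity. Here I would use a concentration–compactness / truncation argument: if mass escapes, one splits $E_n$ into a piece near the origin and a far piece, and uses the strict subadditivity of the functional $E\mapsto \inf\{\widetilde{\F}\}$ with respect to this splitting (which follows from the superadditivity of volume against the sub-additivity behaviour of the fractional perimeters, in the spirit of the arguments in \cite{FFMMM} or the analogous step in the fractional isoperimetric problem) to rule out dichotomy, and a density-type lower bound to rule out vanishing. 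This yields a limit set $E_{s,t}$ with $|E_{s,t}|=1$, and lower semicontinuity of $P_t$ (Fatou on the double integral) together with the continuity properties already recorded in \eqref{F.F} give $\widetilde{\F}(E_{s,t})\le\liminf\widetilde{\F}(E_n)$, so $E_{s,t}$ is a minimizer; it is nontrivial since the infimum is strictly positive.

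For boundedness and regularity I would transfer the problem to the constrained setting of Theorem \ref{thexist}. A minimizer $E_{s,t}$ of $\widetilde{\F}$ with $|E_{s,t}|=1$ is, after writing $a:=(1-t)P_t(E_{s,t})$ and $b:=sP_s(E_{s,t})$, a minimizer of the linear combination $\mu\,(1-t)P_t(\cdot)-\lambda\, sP_s(\cdot)$ under the volume constraint $|E|=1$, where the multipliers $\mu,\lambda>0$ come from differentiating the ratio; equivalently it is a quasi-minimizer of $P_t$ with the $P_s$-term and the volume constraint treated as lower-order perturbations (the $s$-perimeter is a bounded perturbation of the $t$-perimeter at small scales since $s<t$). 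The regularity statements — $C^{1,\beta}$ boundary away from a closed singular set of dimension $\le N-2$, improving to $\le N-8$ when $t=1$ — then follow from exactly the same De Giorgi–type / Almgren quasi-minimality argument used for Theorem \ref{thexist}, applied to $E_{s,t}$; boundedness follows from a standard argument showing that a far-away small component could be removed and reglued near the bulk to strictly decrease the energy, contradicting minimality.

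The main obstacle I expect is the compactness step, specifically ruling out escape of mass to infinity: unlike the constrained problem \eqref{problem} for small $m$ where the ball is an explicit competitor and the quantitative isoperimetric inequality pins things down, here the volume is normalized but large relative to the interesting scale, so one genuinely needs the concentration–compactness dichotomy argument and a careful verification of strict subadditivity of the infimum of $\widetilde{\F}$ under disjoint unions. Establishing that strict inequality — and handling the four boundary cases $s=0$ and/or $t=1$ uniformly using \eqref{s0}, \eqref{s1} and \eqref{F.F} — is where the real work lies.
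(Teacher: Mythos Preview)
Your proposal has a genuine gap in the compactness step. The ``strict subadditivity of the infimum'' argument you sketch does not apply here: $\widetilde{\F}$ is \emph{scale invariant}, so its infimum does not depend on the volume, and the usual Lions-type exclusion of dichotomy (which relies on the infimum being a strictly subadditive function of mass) has no bite. There is nothing preventing a minimizing sequence from splitting into two far-apart pieces, each a rescaled near-minimizer, without any increase in $\widetilde{\F}$. Relatedly, your semicontinuity claim is incomplete: since $P_s$ sits in the \emph{denominator}, lower semicontinuity of $P_t$ alone does not give $\widetilde{\F}(E_{s,t})\le\liminf\widetilde{\F}(E_n)$; you must also show that $P_s$ does not drop in the limit, i.e.\ an \emph{upper} semicontinuity statement for $P_s$ along the sequence, and this is precisely what fails if mass escapes.

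The paper circumvents both issues by a different mechanism. It first solves the problem in growing cubes $Q_n$ (Lemma~\ref{lemBR}), obtaining minimizers $E_n$ with uniformly bounded $t$-perimeter; it then decomposes each $E_n$ into its intersections with unit cubes, tracks where the mass concentrates, and extracts (possibly infinitely many) limit pieces $G_i$ located at mutually diverging centers. Crucially, no attempt is made to rule out splitting. Instead one proves separately that $\sum_{[i]} P_t(G_i)\le\liminf P_t(E_n)$ (Fatou) and $\sum_{[i]} P_s(G_i)\ge\limsup P_s(E_n)$ (this is the delicate inequality~\eqref{limittris}, obtained via the compact embedding $H^{t/2}\hookrightarrow H^{s/2}$ together with a truncation lemma controlling the interaction between near and far pieces). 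The existence is then clinched by an elementary algebraic selection: if $\frac{\sum a_i}{(\sum b_i)^{\alpha}}\le S$ with $\alpha=(N-t)/(N-s)\in(0,1)$, then $a_j/b_j^{\alpha}\le S$ for some $j$ (inequality~\eqref{quasifine}). Thus one of the pieces $G_j$ is already a minimizer. For regularity, rather than passing through Lagrange multipliers, the paper shows directly (Proposition~\ref{regfrac}) that a ratio minimizer is a multiplicative $\omega$-minimizer of $P_t$ and then invokes the same regularity machinery as in Section~\ref{section4}.
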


We point out that, for $s=0$, the problem reduces to the fractional 
isoperimetric problem, for which it is known that the ball 
is the unique minimizer \cite{FS08} (see also \cite{FMM11} for a quantitative 
version of this result). However, we do not know if the ball is still a
minimizer of problem \eqref{isoperintro} for $s>0$.

\smallskip

The paper is organized as follows:
in Section \ref{section1} we recall some general 
properties of the fractional perimeters and, more generally, 
of the fractional Sobolev seminorms. 
In Sections \ref{section2}--\ref{section5} 
we deal with problem \eqref{problem}. Section
\ref{section2} contains the main tools exploited later to prove 
Theorems \ref{thexist} and \ref{thmain}. The
cornerstone of the section is an optimality criterion (see Proposition
\ref{nonoptimality})
which entails density estimates for
minimizers (see Proposition \ref{densityestimate}) 
and the fact that minimizers must be close to a ball, 
if the volume is small enough (see Lemma \ref{deficit}). An
elementary, but important result is then provided 
by Proposition \ref{necessario}, stating that any minimum must be
necessary bounded and, if $t=1$ (that is, $\mathcal{F}_{s,1}=N\omega_NP-sP_s$), 
also essentially connected.
Section \ref{section3} contains Theorem \ref{PRE-thexist}, 
which solves the existence
part of Theorem \ref{thexist}, while in Section \ref{section4} we prove 
that any minimizer has smooth boundary, out of a closed singular set. 
Then, in Section \ref{section5} we show that, if the volume 
$m$ is below a certain threshold $\bar m_1>0$, 
the ball is the unique minimizer for problem \eqref{problem}.
Eventually, in Section \ref{section6}, 
we deal with problem \eqref{isoperintro}. The main result here is
given by Theorem \ref{mainisopintro}, where we show the existence 
and regularity of minimizers.

\section{General properties of fractional perimeters}\label{section1}
Before starting to prove some properties of fractional perimeters it is convenient to fix some notation which will be used throughout the rest of the paper. Firstly, notice that we will denote by $c_N$ a general positive constant depending only on the dimension $N$ and by $c_0$ a positive constant depending on $N$ and $\delta_0$ a fixed quantity such that $0<\delta_0\leq t-s$, which will not necessarily be the same at different occurrences and which can also change from line to line; special constants will be denoted by $c_1$, $c_2$,....
Relevant dependences on parameters will be emphasized by using parentheses.

As customary, we denote by 
$B(x_0,R):=\{x\in \mathbb{R}^N : |x-x_0|<R\}$
the open ball centered in $x_0 \in \mathbb{R}^N$ with radius $R > 0$. We shall use the shorter notation $B=B(0,1)$, with $|B(0,1)|=\omega_N$. Moreover, when not important and clear from the context, we shall denote by $B_m$ the ball of volume $m$, that is of radius $R=(m/|B(0,1)|)^{1/N}$.

Finally, as usual, given two sets $E$ and $F$ of $\mathbb{R}^N$, we denote the symmetric difference between $E$ and
$F$ as $E \Delta F = (E \setminus F) \cup (F \setminus E )$.

\vspace{5mm}

We begin by a simple result.

\begin{lemma}\label{rmk1}
Let $E=E_1\cup E_2$ a subset of $\R^N$ with $|E_1\cap E_2|=0$. Then
\begin{equation}\label{eqsum}
 P_\a(E)=P_\a(E_1)+P_\a(E_2)-
2\int_{E_1}\int_{E_2}\frac{dx\,dy}{|x-y|^{N+\a}}.
\end{equation}
In particular
\begin{equation}\label{R2}
P_\a(E)\le P_\a(E_1)+P_\a(E_2).
\end{equation} 
\end{lemma}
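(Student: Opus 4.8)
The plan is to work directly from the definition of the fractional perimeter as a double integral over $E$ and its complement, splitting the complement according to the decomposition $E = E_1 \cup E_2$. Since $|E_1 \cap E_2| = 0$, for the purpose of the integrals defining $P_\a$ we may treat $E$ as the disjoint union of $E_1$ and $E_2$, so that $E^c = E_1^c \cap E_2^c$, and $E_1^c = E_2 \cup E^c$ (up to a null set), and similarly $E_2^c = E_1 \cup E^c$.

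First I would write
\[
P_\a(E) = \int_E \int_{E^c} \frac{dx\,dy}{|x-y|^{N+\a}}
= \int_{E_1} \int_{E^c} \frac{dx\,dy}{|x-y|^{N+\a}}
+ \int_{E_2} \int_{E^c} \frac{dx\,dy}{|x-y|^{N+\a}},
\]
using the (almost) disjointness of $E_1$ and $E_2$ in the outer integral. Then I would expand each $P_\a(E_i)$ the same way: since $E_i^c = E_j \cup E^c$ for $\{i,j\}=\{1,2\}$ (up to null sets), we get
\[
P_\a(E_i) = \int_{E_i}\int_{E^c} \frac{dx\,dy}{|x-y|^{N+\a}}
+ \int_{E_i}\int_{E_j} \frac{dx\,dy}{|x-y|^{N+\a}}.
\]
Adding the two identities for $i=1,2$ and noting that $\int_{E_1}\int_{E_2} = \int_{E_2}\int_{E_1}$ by symmetry of the kernel, one obtains
\[
P_\a(E_1) + P_\a(E_2) = P_\a(E) + 2\int_{E_1}\int_{E_2}\frac{dx\,dy}{|x-y|^{N+\a}},
\]
which rearranges to \eqref{eqsum}. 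The inequality \eqref{R2} is then immediate since the double integral $\int_{E_1}\int_{E_2}|x-y|^{-N-\a}\,dx\,dy$ is nonnegative.

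The only point requiring a small amount of care — and the closest thing to an obstacle — is the bookkeeping with the null sets: one must check that replacing $E_1^c$ by $E_2 \cup E^c$ (and similarly for $E_2^c$) changes the domains of integration only by sets of measure zero, so that none of the integrals are affected, and also that $\int_{E_1 \cap E_2}\int_{E^c}(\cdots) = 0$ so the outer splitting is exact. Since $|E_1 \cap E_2| = 0$, all of these modifications are on null sets, and the integrals — being integrals of a fixed measurable (possibly infinite) nonnegative function — are unchanged; if $P_\a(E) = +\infty$ the identity \eqref{eqsum} still holds in $[0,+\infty]$ provided at least one of $P_\a(E_1), P_\a(E_2)$ together with the cross term makes sense, and \eqref{R2} holds trivially. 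No deep tool is needed beyond Tonelli's theorem for the symmetry of the cross term.
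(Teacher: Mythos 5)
Your proof is correct and just as short as the paper's, but it runs through a different elementary identity. You work from the set-theoretic form $P_\a(E)=\int_E\int_{E^c}|x-y|^{-N-\a}\,dx\,dy$ and split both the outer domain ($E=E_1\cup E_2$ up to a null set) and, for each $E_i$, its complement ($E_i^c=E_j\cup E^c$ up to a null set, $j\neq i$); adding the resulting identities and using symmetry of the kernel gives \eqref{eqsum}. The paper instead uses the seminorm form $P_\a(E)=\int\!\int|\chi_E(x)-\chi_E(y)|^2|x-y|^{-N-\a}\,dx\,dy$, substitutes $\chi_E=\chi_{E_1}+\chi_{E_2}$ (valid a.e.), and expands the square so that the diagonal terms give $P_\a(E_1)+P_\a(E_2)$ and the cross term gives $-2\int_{E_1}\int_{E_2}$. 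The two arguments are equivalent in content — the paper's quadratic-form expansion is perhaps marginally cleaner and generalizes more readily to finite sums $E=\bigcup_i E_i$, whereas yours makes the sign of the cross term visibly geometric (the interaction $\int_{E_1}\int_{E_2}$ is counted in both $P_\a(E_1)$ and $P_\a(E_2)$ but not in $P_\a(E)$). Your remarks on null sets, nonnegativity of the integrand, and validity in $[0,+\infty]$ are all fine and in fact slightly more careful than the paper's brief computation.
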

\begin{proof}
 Let us denote by $\chi_E$ the characteristic function of the set $E$. We have
 \[
  \begin{aligned}
P_\a(E)&= \int_{\R^N}\int_{\R^N}\frac{(\chi_E(x)-\chi_E(y))^2}{|x-y|^{N+\a}}dx\,dy\\
&=\int_{\R^N}\int_{\R^N}\frac{(\chi_{E_1}(x)+\chi_{E_2}(x)-\chi_{E_1}(y)-\chi_{E_2}(y))^2}{|x-y|^{N+\a}}dx\,dy\\ 
&=\int_{\R^N}\int_{\R^N}\frac{(\chi_{E_1}(x)-\chi_{E_1}(y))^2+(\chi_{E_2}(x)-\chi_{E_2}(y))^2}{|x-y
|^{N+\a}} \\
&+\int_{\R^N}\int_{\R^N}\frac{2(\chi_{E_1}(x)-\chi_{E_1}(y))(\chi_{E_2}(x)-\chi_{E_2}(y))} { |x-y|^ { N+\a } }
dx\,dy\\
&=P_\a(E_1)+P_\a(E_2)-2\int_{E_1}\int_{E_2}\frac{dx\,dy}{|x-y|^{N+\a}}.
  \end{aligned}
 \]

\end{proof}

For further use, we also prove the following
interpolation estimate (by reasoning as in~\cite[Proposition 4.2 and Corollary $4.4$]{bralinpar}):

\begin{lemma}\label{int_est}
For any $E\subset\R^N$ and $0< s<t< 1$ there holds
\begin{equation}\label{basicestimates}
P_s(E)\le c_N\,\frac 1s\left( 1-\frac st\right)^{-1}|E|^{1-\frac s t}(1-t)^\frac s tP_t(E)^\frac s t.
\end{equation}
\end{lemma}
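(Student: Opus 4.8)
The plan is to interpolate the $H^{s/2}$-seminorm of $\chi_E$ between the $L^2$-norm (which equals $|E|$, since $\chi_E$ is a characteristic function) and the $H^{t/2}$-seminorm, following the strategy of Brasco–Lindgren–Parini. The key observation is the scaling behavior of the fractional seminorms: for $\chi_{E_\lambda}$ with $E_\lambda = \lambda E$ one has $[\chi_{E_\lambda}]^2_{H^{\alpha/2}} = \lambda^{N-\alpha}[\chi_E]^2_{H^{\alpha/2}}$, while $|E_\lambda| = \lambda^N |E|$. So I would first fix a scaling parameter $\lambda>0$ and split the double integral defining $P_s(E)$ according to whether $|x-y|<\lambda$ or $|x-y|\ge\lambda$.

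First I would handle the near-diagonal piece $\{|x-y|<\lambda\}$. Here I use $|x-y|^{-N-s} = |x-y|^{-N-t}\,|x-y|^{t-s} \le |x-y|^{-N-t}\,\lambda^{t-s}$, which bounds this piece by $\lambda^{t-s} P_t(E)$ — but to get the right power I would instead keep things sharper and bound it by $\lambda^{t-s}$ times the full $P_t$-integral, i.e. by $\lambda^{t-s} P_t(E)$. Next, for the far piece $\{|x-y|\ge\lambda\}$, I bound $|\chi_E(x)-\chi_E(y)|^2 \le |\chi_E(x)-\chi_E(y)|$ is not quite what I want; rather I use $(\chi_E(x)-\chi_E(y))^2 \le \chi_E(x)+\chi_E(y)$ and integrate: $\int_{\{|x-y|\ge\lambda\}} \frac{\chi_E(x)+\chi_E(y)}{|x-y|^{N+s}}\,dx\,dy = 2|E|\int_{\{|z|\ge\lambda\}}|z|^{-N-s}\,dz = 2|E|\,\frac{N\omega_N}{s}\,\lambda^{-s}$. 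Combining, $P_s(E) \le \lambda^{t-s} P_t(E) + c_N\, s^{-1}\, |E|\,\lambda^{-s}$.

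Then I optimize in $\lambda$. Setting the derivative in $\lambda$ to zero gives $\lambda^t \sim \frac{s}{t-s}\cdot\frac{|E|}{P_t(E)}$ up to dimensional constants, hence $\lambda \sim \big(|E|/P_t(E)\big)^{1/t}$ times a factor involving $s/(t-s)$. Substituting back yields
\[
P_s(E) \le c_N\,\frac1s\left(1-\frac st\right)^{-1}|E|^{1-\frac st}\,P_t(E)^{\frac st},
\]
and the factor $(1-t)^{s/t}$ appears because the statement is phrased with the normalized perimeter $(1-t)P_t(E)$ in mind — i.e. I would rewrite $P_t(E)^{s/t} = (1-t)^{-s/t}\big((1-t)P_t(E)\big)^{s/t}$, absorbing $(1-t)^{-s/t}$ and keeping the claimed $(1-t)^{s/t}$ on the other side. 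I should double-check the exact bookkeeping of constants here, since the precise $s^{-1}(1-s/t)^{-1}$ prefactor is what makes the estimate stable as $s\to 0$ and $t\to 1$; that constant-chasing — rather than any conceptual difficulty — is the main thing to get right, together with verifying that the elementary inequality $(\chi_E(x)-\chi_E(y))^2\le \chi_E(x)+\chi_E(y)$ gives a bound tight enough to reproduce the stated exponents.
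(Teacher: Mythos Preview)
Your splitting-and-optimizing strategy is essentially the same as the paper's, and your far-from-diagonal bound agrees with theirs. The gap is in the near-diagonal piece and in your final paragraph about the factor $(1-t)^{s/t}$.

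Your bound on the region $\{|x-y|<\lambda\}$, via $|x-y|^{-N-s}\le\lambda^{t-s}|x-y|^{-N-t}$, gives $\lambda^{t-s}P_t(E)$, and after optimizing in $\lambda$ you arrive (correctly) at
\[
P_s(E)\ \le\ c_N\,\frac{1}{s}\left(1-\frac st\right)^{-1}|E|^{1-\frac st}\,P_t(E)^{\frac st}.
\]
This is \emph{not} the statement of the lemma: the right-hand side there is $(1-t)^{s/t}P_t(E)^{s/t}=\big((1-t)P_t(E)\big)^{s/t}$, which is strictly smaller. Your suggestion to write $P_t(E)^{s/t}=(1-t)^{-s/t}\big((1-t)P_t(E)\big)^{s/t}$ and ``absorb'' $(1-t)^{-s/t}$ into $c_N$ fails, because $(1-t)^{-s/t}\to+\infty$ as $t\to1^-$; it is not a dimensional constant. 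So your argument proves a weaker inequality that does not survive the limit $t\to1$ (which is exactly the regime the paper cares about; see the Remark right after the lemma and the role of \eqref{basicestimates} throughout the paper).

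The missing ingredient is a sharper near-diagonal bound that produces the factor $(1-t)$. The paper uses, instead of your pointwise kernel comparison, the inequality
\[
\int_{\R^N}|u(x+h)-u(x)|\,dx\ \le\ c_N\,(1-t)\,|h|^{t}\,P_t(E)\qquad\text{for all }h\ne0,
\]
for $u=\chi_E$ (this is \cite[Lemma~A.1]{bralinpar}, a Maz'ya--Shaposhnikova/Bourgain--Brezis--Mironescu type estimate). Inserting this into $\int_{|h|<1}|h|^{-N-s}\big(\int_{\R^N}|u(x+h)-u(x)|\,dx\big)\,dh$ gives $I_1\le c_N\,\dfrac{1-t}{t-s}\,P_t(E)$, and after the same scaling-and-optimizing step you obtain the claimed estimate with $(1-t)^{s/t}$. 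Without this lemma (or an equivalent device) the factor $(1-t)^{s/t}$ cannot be recovered.
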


\begin{proof}
We reason as in \cite[Prop. 4.2]{bralinpar}.
Letting $u=\chi_E$, we can write
\begin{eqnarray*}
P_s(E)&=&\int_{\R^N}\int_{\R^N}\frac{|u(x+h)-u(x)|}{|h|^{N+s}}\,dxdh
\\
&=& \int_{|h|<1}\int_{\R^N}\frac{|u(x+h)-u(x)|}{|h|^{N+s}}\,dxdh
\\
&&+ \int_{|h|\ge 1}\int_{\R^N}\frac{|u(x+h)-u(x)|}{|h|^{N+s}}\,dxdh
\,=:\, I_1+I_2.
\end{eqnarray*}
We recall that, by \cite[Lemma A.1]{bralinpar} (see also \cite{Maz}),
there exists a constant $c_N$ such that 
\begin{equation}\label{brA1}
\int_{\R^N}\frac{|u(x+h)-u(x)|}{|h|^{t}}\,dx \le
c_N\, (1-t) P_t(E)
\,,
\end{equation}
for all $|h|>0$. We then estimate
\begin{eqnarray}\nonumber
I_1 &=& \int_{|h|<1}\int_{\R^N}\frac{|u(x+h)-u(x)|}{|h|^{N+s}}\,dxdh
\\ \label{stimaone}
&\le& c_N (1-t) P_t(E) \int_{|h|<1} \frac{1}{|h|^{N-(t-s)}}\,dh
\\\nonumber
&=& c_N\frac{1-t}{t-s} P_t(E),
\end{eqnarray}
and
\begin{eqnarray}\nonumber
I_2 &=& \int_{|h|\ge 1}\int_{\R^N}\frac{|u(x+h)-u(x)|}{|h|^{N+s}}\,dxdh
\\ \label{stimatwo}
&\le& 2 |E| \int_{|h|\ge 1} \frac{1}{|h|^{N+s}}\,dh
\\\nonumber
&=& \frac{2N\omega_N}{s}|E|.
\end{eqnarray}
Putting together \eqref{stimaone} and \eqref{stimatwo}
we then get
\begin{equation}\label{stimathree}
P_s(E)\le c_N\frac{1-t}{t-s} P_t(E) + \frac{2N\omega_N}{s}|E|.
\end{equation}
If we evaluate \eqref{stimathree} on the set $\lambda E$, 
with $\lambda>0$, we obtain
\[
\lambda^{N-s}P_s(E)\le 
c_N\frac{1-t}{t-s} \lambda^{N-t}P_t(E) 
+ \lambda^N \frac{2N\omega_N}{s}|E|,
\]
that is,
\begin{equation}\label{stimafour}
\lambda^{t-s}P_s(E)-\lambda^t\frac{2N\omega_N|E|}{s}
\le \frac{c_N(1-t)}{t-s}P_t(E).
\end{equation}
The expression at the left-hand side of \eqref{stimafour}
reaches its maximum at
\[
\lambda = \left(\frac{s(t-s)P_s(E)}{2N\omega_Nt|E|}\right)^\frac 1 s.
\]
Substituting this value of $\lambda$ into \eqref{stimafour}
we get \eqref{basicestimates}.
\end{proof}

\begin{remark}\rm
If we let $t\to1^-$ in \eqref{basicestimates},
we recover the estimate in \cite[Cor. 4.4]{bralinpar}:
\begin{equation}\label{basicestimate}
P_s(E)\le \frac{c_N}{s(1-s)}|E|^{1-s}P(E)^s.
\end{equation}
Indeed the proof of Lemma~\ref{int_est} extends to the case $t=1$, by substituting $(1-t)P_t(E)$ with $P(E)$ in the
right hand side of \eqref{brA1}.
\end{remark}

\noindent
We show now  a version
of the local fractional isoperimetric inequality.
For this, we recall that the fractional perimeter of a set~$E$ in
a bounded set~$\Omega$ is defined by
\begin{equation}\label{67-0}
P_\a(E,\Omega):= \int_{E\cap\Omega}\int_{\R^N\setminus E}
\frac{dx\,dy}{|x-y|^{N+\alpha}}+
\int_{\Omega\setminus E}\int_{E\setminus \Omega}
\frac{dx\,dy}{|x-y|^{N+\alpha}}
..\end{equation}
With this setting, we have a variant of Lemma~\ref{rmk1} as follows:

\begin{lemma}
Let~$\Omega_1$ and~$\Omega_2$ be disjoint bounded sets. Then
\begin{equation}\label{U35}
P_\a(E,\Omega_1)+P_\a(E,\Omega_2)\le P_\a(E,\Omega_1\cup\Omega_2)
+2\int_{\Omega_1}\int_{\Omega_2}\frac{dx\,dy}{|x-y|^{N+\alpha}}
..\end{equation}
\end{lemma}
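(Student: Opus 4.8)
The plan is to unwind the definition~\eqref{67-0} and reduce~\eqref{U35} to an elementary counting identity. Set $\Omega:=\Omega_1\cup\Omega_2$. First I would rewrite the localized perimeter as a single double integral over ``split pairs'' having an endpoint in $\Omega$: splitting $\R^N\setminus E$ into its intersections with $\Omega$ and with $\R^N\setminus\Omega$ in the first summand of~\eqref{67-0}, and using the symmetry of the kernel $|x-y|^{-N-\a}$ to relabel the dummy variables in the second summand, one checks directly that
\[
P_\a(E,\Omega)=\int\!\!\int_{S(\Omega)}\frac{dx\,dy}{|x-y|^{N+\a}},\qquad
S(\Omega):=\bigl\{(x,y)\in\R^N\times\R^N:\ x\in E,\ y\notin E,\ \{x,y\}\cap\Omega\neq\emptyset\bigr\}.
\]
In words, $P_\a(E,\Omega)$ integrates the kernel over every ordered pair with one endpoint in $E$, one in $E^c$, and at least one endpoint in $\Omega$, each such pair being counted exactly once.

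Granting this representation, the inequality becomes purely set-theoretic. Since a pair meets $\Omega_1\cup\Omega_2$ if and only if it meets $\Omega_1$ or $\Omega_2$, we have $S(\Omega_1)\cup S(\Omega_2)=S(\Omega)$; hence, from the pointwise identity $\chi_{S(\Omega_1)}+\chi_{S(\Omega_2)}=\chi_{S(\Omega_1)\cup S(\Omega_2)}+\chi_{S(\Omega_1)\cap S(\Omega_2)}$ and the representation above,
\[
P_\a(E,\Omega_1)+P_\a(E,\Omega_2)=P_\a(E,\Omega)+\int\!\!\int_{S(\Omega_1)\cap S(\Omega_2)}\frac{dx\,dy}{|x-y|^{N+\a}}.
\]
Finally I would bound the last integral: if $(x,y)\in S(\Omega_1)\cap S(\Omega_2)$, then $\{x,y\}$ meets both $\Omega_1$ and $\Omega_2$, and since $\Omega_1\cap\Omega_2=\emptyset$ this forces one endpoint into $\Omega_1$ and the other into $\Omega_2$; thus $S(\Omega_1)\cap S(\Omega_2)\subset(\Omega_1\times\Omega_2)\cup(\Omega_2\times\Omega_1)$, so that, again by the symmetry of the kernel, the last integral is at most $2\int_{\Omega_1}\int_{\Omega_2}|x-y|^{-N-\a}\,dx\,dy$. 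Combining the two displays yields~\eqref{U35}.

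I expect the only delicate point to be the first step, namely justifying the pair-representation of $P_\a(E,\Omega)$ from~\eqref{67-0}: one must match the first summand of~\eqref{67-0} with the pairs whose $E$-endpoint lies in $\Omega$, and (after relabeling via the kernel's symmetry) the second summand with the pairs whose $E^c$-endpoint lies in $\Omega$ but whose $E$-endpoint lies outside $\Omega$, then verify that these two families are disjoint and together exhaust $S(\Omega)$. This is a short verification, but it is where the bookkeeping has to be done carefully. As an alternative one could bypass the representation entirely and argue exactly as in the proof of Lemma~\ref{rmk1}, writing everything in terms of the characteristic functions of $E$, $\Omega_1$ and $\Omega_2$ and collecting terms; the route above seems cleaner to me.
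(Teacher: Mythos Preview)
Your proof is correct and takes a genuinely different route from the paper's. The paper proceeds by brute-force expansion: it writes out all six double integrals appearing in $P_\a(E,\Omega_1\cup\Omega_2)-P_\a(E,\Omega_1)-P_\a(E,\Omega_2)$ directly from~\eqref{67-0}, cancels the matching terms, and is left with exactly
\[
-\int_{\Omega_1\setminus E}\int_{E\cap\Omega_2}\frac{dx\,dy}{|x-y|^{N+\a}}
-\int_{\Omega_2\setminus E}\int_{E\cap\Omega_1}\frac{dx\,dy}{|x-y|^{N+\a}},
\]
whence~\eqref{U35} follows by observing that both domains of integration sit inside $\Omega_1\times\Omega_2$ (up to swapping variables). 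Your approach instead first repackages~\eqref{67-0} as a single integral over the set $S(\Omega)$ of ``split pairs touching $\Omega$'', and then the whole lemma collapses to the inclusion--exclusion identity $\chi_{S(\Omega_1)}+\chi_{S(\Omega_2)}=\chi_{S(\Omega)}+\chi_{S(\Omega_1)\cap S(\Omega_2)}$ together with the observation that $S(\Omega_1)\cap S(\Omega_2)\subset(\Omega_1\times\Omega_2)\cup(\Omega_2\times\Omega_1)$. The two arguments produce the same exact identity for the defect (your overlap integral over $S(\Omega_1)\cap S(\Omega_2)$ is precisely the sum of the two integrals the paper obtains), so neither is stronger; but your formulation is conceptually cleaner, makes the role of disjointness transparent, and would extend immediately to finitely many pairwise disjoint $\Omega_i$ without any additional bookkeeping, whereas the paper's term-by-term cancellation would become unwieldy. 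The one step you flag as delicate---the pair representation of $P_\a(E,\Omega)$---is indeed the only thing to check, and your sketch of it (first summand gives pairs with $E$-endpoint in $\Omega$, second summand after relabeling gives pairs with $E^c$-endpoint in $\Omega$ and $E$-endpoint outside $\Omega$) is correct.
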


\begin{proof} We use~\eqref{67-0} (omitting the integrands for simplicity)
to compute
\begin{eqnarray*}
&& P_\a(E,\Omega_1\cup\Omega_2)
-P_\a(E,\Omega_1)-P_\a(E,\Omega_2)
\\ &=& \int_{E\cap(\Omega_1\cup\Omega_2)}\int_{\R^N\setminus E}
+
\int_{(\Omega_1\cup\Omega_2)\setminus E}\int_{E\setminus (\Omega_1\cup\Omega_2)}
\\ &&-
\int_{E\cap\Omega_1}\int_{\R^N\setminus E}
-
\int_{\Omega_1\setminus E}\int_{E\setminus \Omega_1}
-
\int_{E\cap\Omega_2}\int_{\R^N\setminus E}
-
\int_{\Omega_2\setminus E}\int_{E\setminus \Omega_1}
\\ &=&
\int_{E\cap\Omega_1}\int_{\R^N\setminus E}+\int_{E\cap\Omega_2}\int_{\R^N\setminus E}
+
\int_{\Omega_1\setminus E}\int_{E\setminus (\Omega_1\cup\Omega_2)}
+ \int_{\Omega_2\setminus E}\int_{E\setminus (\Omega_1\cup\Omega_2)}
\\ &&-
\int_{E\cap\Omega_1}\int_{\R^N\setminus E}
-
\int_{\Omega_1\setminus E}\int_{E\setminus \Omega_1}
-
\int_{E\cap\Omega_2}\int_{\R^N\setminus E}
-
\int_{\Omega_2\setminus E}\int_{E\setminus \Omega_1}
\\ &=&
\int_{\Omega_1\setminus E}\int_{E\setminus (\Omega_1\cup\Omega_2)}
+ \int_{\Omega_2\setminus E}\int_{E\setminus (\Omega_1\cup\Omega_2)}
-\int_{\Omega_1\setminus E}\int_{E\setminus \Omega_1}
-
\int_{\Omega_2\setminus E}\int_{E\setminus \Omega_1}\\&=&
-\int_{\Omega_1\setminus E}\int_{(E\setminus \Omega_1)\cap\Omega_2}
-\int_{\Omega_2\setminus E}\int_{(E\setminus \Omega_2)\cap\Omega_1}
.
\end{eqnarray*}
This implies~\eqref{U35}.
\end{proof}

Then, we have the following local fractional isoperimetric inequality:
\begin{lemma}\label{lemfracisoploc}
 Let $\Omega$ be a open bounded set with Lipschitz boundary and let $E\subseteq\R^N$ such that $|E\cap
\Omega|<|\Omega|/2$. Then there exists a constant $C=C(|\Omega|,N,\a)$ such that
\begin{equation}\label{fracisoploc}
 P_\a(E,\Omega)\ge C |E\cap\Omega|^{\frac{N-\a}{N}}.
\end{equation}
\end{lemma}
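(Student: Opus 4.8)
The plan is to reduce the local fractional isoperimetric inequality to the classical relative isoperimetric inequality in $\Omega$. First I would recall that, since $\partial\Omega$ is Lipschitz, the classical relative isoperimetric inequality holds: there is a constant $C_1=C_1(\Omega,N)$ such that for every set $F$ of finite perimeter with $|F\cap\Omega|\le|\Omega|/2$ one has $P(F,\Omega)\ge C_1|F\cap\Omega|^{\frac{N-1}{N}}$. The strategy is then to bound $P_\a(E,\Omega)$ from below by (a power of) the classical relative perimeter $P(E,\Omega)$, after which \eqref{fracisoploc} follows from the classical inequality and the fact that for sets contained in a fixed bounded region a power $|E\cap\Omega|^{(N-1)/N}$ controls $|E\cap\Omega|^{(N-\a)/N}$ up to a constant depending on $|\Omega|$.

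The comparison between the fractional and the classical relative perimeter is the heart of the matter. If $P(E,\Omega)=+\infty$ there is nothing to prove, so assume $E$ has finite perimeter in $\Omega$. One natural route is to work directly from the definition \eqref{67-0}: for $x\in E\cap\Omega$ and $y$ ranging over a small cone or ball in $\R^N\setminus E$ near $\partial^*E\cap\Omega$, the kernel $|x-y|^{-(N+\a)}$ is bounded below, so integrating over a fixed-size neighbourhood produces a term comparable to the (classical) perimeter measure of $\partial^* E$ inside $\Omega$. More concretely, one fixes $r_0>0$ (depending on $\Omega$) and restricts both integrations in \eqref{67-0} to pairs with $|x-y|<r_0$; on this range the kernel is $\ge r_0^{-(N+\a)}$, and
\[
\int_{E\cap\Omega}\Bigl(\int_{(\R^N\setminus E)\cap B(x,r_0)}dy\Bigr)\,dx
\ge c_N\,r_0^{N+1}\,P(E,B(x,r_0)\text{-localized})
\]
by the classical fact that, for each $x$ near $\partial^* E$, the complement occupies a definite fraction of a small ball. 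Patching these local contributions (using a covering of $\Omega$ by balls of radius $r_0$ and that a point of $\partial^*E$ is counted a bounded number of times) yields $P_\a(E,\Omega)\ge C_2(\Omega,N,\a)\,P(E,\Omega)$.

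A cleaner alternative, which I would actually prefer, is to invoke the fractional Sobolev inequality already available in the paper's framework: by \eqref{basicestimates} (or rather its structure) and the elementary inequality $P(E)\le C\,[\,(1-\a)P_\a(E)\,]$-type bounds, one knows $[\chi_E]_{W^{\a,1}}$ controls lower-order quantities; localized to $\Omega$ with a cutoff $\varphi\in C_c^\infty(\Omega')$, $\Omega\Subset\Omega'$, and using \eqref{U35} to split the seminorm, one gets $P_\a(\varphi\chi_E\text{-region})\le P_\a(E,\Omega')+\text{(bounded geometric error)}$, and then the global fractional isoperimetric inequality applied to the truncated function gives \eqref{fracisoploc} directly with the correct exponent $\frac{N-\a}{N}$.

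The main obstacle I anticipate is getting the exponent right and the constant's dependence clean. The classical relative inequality naturally produces the exponent $(N-1)/N$, whereas the statement asks for $(N-\a)/N$; converting between them requires using $|E\cap\Omega|\le|\Omega|$ to trade exponents, which is legitimate only because we are in a bounded set and it is the \emph{smallness} of $|E\cap\Omega|$ that matters. So I would be careful that the inequality degrades gracefully: for $|E\cap\Omega|$ small, $|E\cap\Omega|^{(N-1)/N}\ge|\Omega|^{-\a/N}|E\cap\Omega|^{(N-\a)/N}$, which is the right direction. The other delicate point is the uniformity of the "definite fraction of the ball lies in $E^c$" estimate near $\partial^*E$ — this is standard (it follows from the relative isoperimetric inequality on balls, or from the definition of reduced boundary), but one must phrase it measure-theoretically rather than assuming any regularity of $\partial E$. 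Once these are handled, the assembly of \eqref{fracisoploc} is routine.
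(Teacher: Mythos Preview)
Your main route has a genuine gap: the inequality $P_\a(E,\Omega)\ge C(\Omega,N,\a)\,P(E,\Omega)$ that you try to establish in Strategy~1 is \emph{false}. A set $E\subset\Omega$ with highly oscillatory boundary can have arbitrarily large (even infinite) classical relative perimeter while keeping $P_\a(E,\Omega)$ bounded; this is the same phenomenon behind $(1-\a)P_\a\to P$ as $\a\nearrow1$ rather than a pointwise comparison. Your heuristic ``the complement occupies a definite fraction of a small ball near $\partial^*E$'' gives a lower bound of the form $\int_{E\cap\Omega}|B(x,r_0)\setminus E|\,dx$, not a quantity comparable to $P(E,\Omega)$, and the proposed patching over a covering does not recover the perimeter measure. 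Relatedly, the sentence ``if $P(E,\Omega)=+\infty$ there is nothing to prove'' is backwards: in that case your strategy \emph{fails}, since $P_\a(E,\Omega)$ can still be finite and the inequality \eqref{fracisoploc} must be shown by other means.

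Your ``cleaner alternative'' is pointed in the right direction but, as written, is too vague to be a proof; in particular you never explain how to control $P_\a$ of the localized set by $P_\a(E,\Omega)$ (the extra interactions between $E\cap\Omega$ and $E\setminus\Omega$ are not obviously bounded by the geometric error you mention). The paper bypasses all of this by applying directly the fractional Poincar\'e--Sobolev inequality on $\Omega$: for $f=\chi_E$, $p=1$ and $q=\tfrac{N}{N-\a}$ one has
\[
2P_\a(E,\Omega)\ \ge\ \int_\Omega\int_\Omega\frac{|\chi_E(x)-\chi_E(y)|}{|x-y|^{N+\a}}\,dx\,dy\ \ge\ C(N,\a,|\Omega|)\,\|\chi_E-|E\cap\Omega|/|\Omega|\|_{L^q(\Omega)},
\]
and computing the right-hand side explicitly together with $|E\cap\Omega|\le|\Omega|/2$ gives $C\,|E\cap\Omega|^{(N-\a)/N}$ immediately, with the correct exponent and no detour through the classical perimeter.
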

\begin{proof}
The case $t=1$ is classical. For its proof we refer to \cite[Section $II.1.6$]{M12}.
 We begin by recalling the Poincar\'e-type inequality for fractional Sobolev spaces (see for instance
\cite[Equations $(2)$ and $(3)$]{Bo}: for any $p\ge1$ and $\a\in(0,1)$, given a function $f\in L^p(\Omega)$ we have that  
 \begin{equation}\label{poincare}
\int_\Omega\int_\Omega\frac{|f(x)-f(y)|^p}{|x-y|^{N+\a p}}\ge
C(N,\a,p,|\Omega|)\,\|f-f_\Omega\|_{L^q(\Omega)},
 \end{equation}
where
\[
 f_\Omega=\frac{1}{\Omega}\int_\Omega |f|\,dx
\]
and
\begin{equation}\label{indici}
 \frac1q=\frac1p-\frac \a N.
\end{equation}
By applying \eqref{poincare} with $p=1$, $\a\in(0,1)$ and $f=\chi_E$, and by the very definition of $ P_\a(E)$ we
get
that
\[
 \begin{aligned}
  2P_\a(E,\Omega)&\ge \int_\Omega\int_\Omega \frac{|\chi_E(x)-\chi_E(y)|}{|x-y|^{N+\a}}\\
   &\ge C(N,\a,|\Omega|)\left(\int_\Omega
\left|\chi_E(x)-\frac{|E\cap\Omega|}{|\Omega|}\right|^q\,dx\right)^{1/q}\\
   &=C(N,\a,|\Omega|)\left[|E\cap\Omega|\left(1-\frac{|E\cap\Omega|}{|\Omega|}\right)^q+|\Omega\setminus
E|\left(\frac{|E\cap\Omega|}{|\Omega|}\right)^q \right]^{1/q}\\
   &\ge C(N,\a,|\Omega|)|E\cap\Omega|^{1/q}\left(1-\frac{|E\cap\Omega|}{|\Omega|}\right)\\
  &\ge \frac{C(N,\a,|\Omega|)}{2} |E\cap\Omega|^{1/q}.
 \end{aligned}
\]
Since, by \eqref{indici}, $q=N/(N-\a)$, the proof is concluded.
\end{proof}

\noindent
Beside the local fractional isoperimetric inequality \eqref{fracisoploc}, we recall from \cite{FS} the
standard (fractional) one: if $0<t_0\le \a\le 1$ then it holds (if $|E|<+\infty$)
% \begin{equation}\label{isop}
% P(E)\ \ge\ c(N) |E|^\frac{N-1}{N}\,,
% \end{equation}
% \begin{equation}\label{isoploc}
% P(E)\ \ge\ c(N) \min\left(|E\cap \Omega|^\frac{N-1}{N},
% |\Omega\setminus E|^\frac{N-1}{N}\right)\,,
% \end{equation}
\begin{equation}\label{fracisop}
(1-\a)P_\a(E)\ \ge\ c(N,t_0) |E|^\frac{N-\a}{N},\qquad \text{ where $c(N,t_0)=\frac{c_N}{t_0}$},
\end{equation}
% \begin{equation}\label{fracisoploc}
% (1-t)P_t(E,\Omega)\ \ge\ 
% c(\Omega,t_0) \min\left(|E\cap \Omega|^\frac{N-t}{N},
% |\Omega\setminus E|^\frac{N-t}{N}\right)\,,
% \end{equation}
% where $\Omega\subset\R^N$ is a bounded open set with Lipschitz 
% boundary.

We now recall some basic facts on hypersingular Riesz operators on the sphere,
following \cite[pp. 4-5]{labandadei5} (see also \cite[pp. 159-160]{Sam}).
We denote by $\S_k$ the space of spherical harmonics of degree $k$, and
by $d(k)$ the dimension of $\S_k$. For $\alpha\in (0,1)$ we also let 
$\I_\alpha$ be the operator defined as 
\[
\I_\alpha u(x) = 2\,{\rm p.v.} \int_{\partial B}	\frac{u(x)-u(y)}{|x-y|^{N+\alpha}}\,d\H^{N-1}(y)\qquad {\rm for }\ u\in
C^2(\partial B),
\]
(with the symbol p.v. we mean that the integral is considered in the Cauchy principal value sense)
and we let $\lambda_k^\alpha$ be the $k^{\rm th}$ eigenvalue of $\I_\alpha$, that is,
\[
\I_\alpha Y = \lambda_k^\alpha Y \qquad {\rm for\ any\ }Y\in \S_k.
\]
We then have $\lambda_k^\alpha\to +\infty$ as $k\to +\infty$, and
\[
\lambda_0^\alpha=0\qquad \lambda_{k+1}^\alpha>\lambda_k^\alpha\qquad \forall k\in\mathbb N\cup\{0\}.
\]
If we let $\{Y^i_k\}_{i=1}^{d(k)}$ be an orthonormal basis of $\S_k$
in $L^2(\partial B)$, and denote by 
\[
a_k^i(u) := \int_{\partial B}u\, Y^i_k\,d\H^{N-1},
\]
the Fourier coefficients of $u\in L^2(\partial B)$ corresponding to $Y^i_k$,
we have
\begin{eqnarray}\label{armonic}
\nonumber[u]^2_{H^\frac{1+\alpha}{2}(\partial B)} &:=& \int_{\partial B}\int_{\partial
B}\frac{|u(x)-u(y)|^2}{|x-y|^{N+\alpha}} \,d\H^{N-1}(x)\,d\H^{N-1}(y)
\\
\nonumber&=& \int_{\partial B}u\, \I_\alpha u\,d\H^{N-1} 
\\
&=& \sum_{k=0}^\infty \sum_{i=0}^{d(k)}\lambda_k^\alpha \,a_k^i(u)^2.
\end{eqnarray}

\begin{proposition}\label{prola}{\rm (\cite[Proposition 2.3]{labandadei5})} We have
\begin{eqnarray*}
\lambda_k^\alpha\geq \lambda_1^\alpha = \alpha(N-\alpha)\frac{P_\alpha(B)}{P(B)} 
\ge \frac{1}{c_N(1-\alpha)}\,.
\end{eqnarray*}
%where $C$ depends only on $N$.
\end{proposition}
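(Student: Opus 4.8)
The plan is to compute the quantities $\lambda_1^\alpha$ explicitly using the spectral decomposition in \eqref{armonic} applied to a suitable test function, and then to invoke the lower bound on the eigenvalues of the hypersingular Riesz operator proved in \cite{labandadei5}. First I would recall that the coordinate functions $x_i$, restricted to $\partial B$, are (up to normalization) the spherical harmonics of degree one, so that they form a basis of $\S_1$. The key identity to establish is the equality $\lambda_1^\alpha = \alpha(N-\alpha)\frac{P_\alpha(B)}{P(B)}$; this should follow by evaluating $[u]^2_{H^{(1+\alpha)/2}(\partial B)}$ for $u$ equal to one of these coordinate functions and comparing with $\lambda_1^\alpha \|u\|_{L^2(\partial B)}^2$, using \eqref{armonic}. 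Concretely, I would show that $\sum_{i=1}^N [x_i]^2_{H^{(1+\alpha)/2}(\partial B)} = \lambda_1^\alpha \sum_{i=1}^N \|x_i\|_{L^2(\partial B)}^2 = \lambda_1^\alpha P(B)$, since $\sum_i x_i^2 = 1$ on $\partial B$ gives $\sum_i \|x_i\|_{L^2(\partial B)}^2 = \H^{N-1}(\partial B) = P(B)$.

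The main computational step is then to recognize that $\sum_{i=1}^N |x-y|^2$-type expansion links the left-hand side to the fractional perimeter of the ball. Writing $|x-y|^2 = 2 - 2x\cdot y$ for $x,y\in\partial B$, one has $\sum_{i=1}^N |x_i(x)-x_i(y)|^2 = |x-y|^2$, hence
\[
\sum_{i=1}^N [x_i]^2_{H^{(1+\alpha)/2}(\partial B)} = \int_{\partial B}\int_{\partial B}\frac{|x-y|^2}{|x-y|^{N+\alpha}}\,d\H^{N-1}(x)\,d\H^{N-1}(y) = \int_{\partial B}\int_{\partial B}\frac{d\H^{N-1}(x)\,d\H^{N-1}(y)}{|x-y|^{N+\alpha-2}}.
\]
The remaining task is to identify this boundary double integral with a multiple of $P_\alpha(B)$. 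This is where I expect the main obstacle: one must relate the surface integral $\int_{\partial B}\int_{\partial B}|x-y|^{-(N+\alpha-2)}$ to the volume integral $P_\alpha(B) = \int_B\int_{B^c}|x-y|^{-(N+\alpha)}$. This can be done either by a direct change to polar coordinates (integrating the singular kernel in the radial variables first, which produces the factor $\alpha(N-\alpha)$ from integrating $r^{-1-\alpha}$ type terms over rays), or by citing the explicit computation in \cite{labandadei5, Sam}. I would follow the former: parametrize $B$ and $B^c$ by $(r,\omega)$ with $r>0$, $\omega\in\partial B$, write $|r_1\omega_1 - r_2\omega_2|$, and reduce the radial integration to a one-dimensional computation whose value is proportional to $|\omega_1-\omega_2|^{-(N+\alpha-2)}$ with proportionality constant $\frac{1}{\alpha(N-\alpha)}$ (up to absolute constants). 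Matching the two sides yields $\lambda_1^\alpha = \alpha(N-\alpha)\frac{P_\alpha(B)}{P(B)}$.

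Finally, the inequality $\lambda_1^\alpha \ge \frac{1}{c_N(1-\alpha)}$ follows by combining the identity just proved with the fractional isoperimetric inequality \eqref{fracisop}: taking $t_0 = \alpha$ there (or observing scaling) gives $(1-\alpha)P_\alpha(B) \ge c(N,\alpha)\omega_N^{(N-\alpha)/N}$, and since $P(B) = N\omega_N$ is a dimensional constant, we get $\frac{P_\alpha(B)}{P(B)} \ge \frac{c_N}{1-\alpha}$ for a new dimensional constant $c_N$; multiplying by $\alpha(N-\alpha) \ge $ (a dimensional constant times $\alpha$) and absorbing $\alpha$ — noting $\alpha(N-\alpha)$ stays bounded below away from $\alpha=0$ only after the stated normalization, so in fact the cleaner route is to use that $\alpha(N-\alpha)P_\alpha(B)$ is comparable to $(1-\alpha)^{-1}$ up to dimensional constants by \eqref{fracisop} and the elementary bound $P_\alpha(B)\le c_N/(\alpha(1-\alpha))$ — one concludes $\lambda_1^\alpha\ge \frac{1}{c_N(1-\alpha)}$. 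The monotonicity $\lambda_k^\alpha \ge \lambda_1^\alpha$ for $k\ge 1$ is already recorded in the text preceding the statement (the eigenvalues are strictly increasing in $k$), so no further argument is needed for that part.
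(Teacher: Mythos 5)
The paper itself offers no proof of this proposition; it is cited verbatim from \cite[Proposition 2.3]{labandadei5}. Your proposal must therefore stand on its own, and most of it does: the identification of the coordinate functions $x_i$ with an orthogonal basis of $\S_1$, the identity $\sum_i[x_i]^2_{H^{(1+\alpha)/2}(\partial B)}=\lambda_1^\alpha\sum_i\|x_i\|_{L^2(\partial B)}^2=\lambda_1^\alpha P(B)$, the reduction to the boundary double integral $\int_{\partial B}\int_{\partial B}|x-y|^{-(N+\alpha-2)}$, and the final deduction of $\lambda_1^\alpha\ge c_N^{-1}(1-\alpha)^{-1}$ from~\eqref{fracisop} are all correct.

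The gap is precisely in the step you flagged as the ``main obstacle'', and your sketch for filling it does not work. You claim that after passing to polar coordinates in $P_\alpha(B)$ the inner radial integral
\[
\int_0^1\!\!\int_1^\infty\frac{r^{N-1}\rho^{N-1}\,dr\,d\rho}{\big[(r-\rho)^2+r\rho\,|\omega-\sigma|^2\big]^{(N+\alpha)/2}}
\]
is equal to $\frac{1}{\alpha(N-\alpha)}\,|\omega-\sigma|^{-(N+\alpha-2)}$. This is false pointwise in the angular variable. Already for $N=1$, $B=(-1,1)$: taking $\omega=\sigma$ so $|\omega-\sigma|=0$, the radial integral is $\int_0^1\int_1^\infty(\rho-r)^{-1-\alpha}\,d\rho\,dr=\frac{1}{\alpha(1-\alpha)}\neq 0$, whereas your formula gives $\frac{0^{1-\alpha}}{\alpha(1-\alpha)}=0$; and taking $\omega=-\sigma$ so $|\omega-\sigma|=2$, the radial integral equals $\int_0^1\int_1^\infty(r+\rho)^{-1-\alpha}\,d\rho\,dr=\frac{2^{1-\alpha}-1}{\alpha(1-\alpha)}$, whereas your formula gives $\frac{2^{1-\alpha}}{\alpha(1-\alpha)}$. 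The global identity $\int_{\partial B}\int_{\partial B}|x-y|^{-(N+\alpha-2)}=\alpha(N-\alpha)P_\alpha(B)$ is nevertheless true (for $N=1$ both sides equal $2^{2-\alpha}$), but it holds only after integration over the sphere, not termwise in the radial/angular decomposition. To close the argument one needs a genuinely different mechanism: either evaluate both sides explicitly via the Gegenbauer substitution $\omega\cdot\sigma=\cos\theta$ and Beta/Gamma functions and compare, or, as in \cite{labandadei5}, compute the second variation $\delta^2 P_\alpha(B)[u]$ for normal perturbations $u$ of $\partial B$, show it takes the form $\frac12\big([u]^2_{H^{(1+\alpha)/2}(\partial B)}-\alpha(N-\alpha)\frac{P_\alpha(B)}{P(B)}\|u\|^2_{L^2(\partial B)}\big)$ up to lower-order corrections, and use that translations $u\in\S_1$ leave $P_\alpha$ invariant to force $\lambda_1^\alpha=\alpha(N-\alpha)P_\alpha(B)/P(B)$.
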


\begin{proposition}
Let $u\in H^{\frac{1+t}{2}}(\partial B)$ and $0\leq s\leq t<1$ then the following estimate holds
\begin{equation}\label{H}
(1-s)[u]^2_{H^\frac{1+s}{2}(\partial B)}\leq c_N(1-t)[u]^2_{H^\frac{1+t}{2}(\partial B)}. 
\end{equation}
\end{proposition}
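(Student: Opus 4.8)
The plan is to reduce \eqref{H}, via the spherical harmonic expansion \eqref{armonic}, to a comparison of the eigenvalues $\lambda_k^\alpha$ one degree at a time, and then to prove the sharp two-sided estimate $(1-\alpha)\lambda_k^\alpha\approx k^{1+\alpha}$ for $k\ge1$, with constants depending only on $N$. First I would record that, by \eqref{armonic}, for $\alpha\in(0,1)$ and $u\in H^{\frac{1+t}{2}}(\partial B)$ with Fourier coefficients $a_k^i(u)$,
\[
(1-\alpha)\,[u]^2_{H^\frac{1+\alpha}{2}(\partial B)}=\sum_{k=0}^{\infty}\sum_{i=1}^{d(k)}(1-\alpha)\,\lambda_k^\alpha\,a_k^i(u)^2,
\]
an identity that persists for $\alpha=0$ by letting $\alpha\to0^+$ (the kernel $|x-y|^{-N-\alpha}$ being monotone in $\alpha$, monotone convergence applies and $\lambda_k^0:=\lim_{\alpha\to0^+}\lambda_k^\alpha<\infty$ for every $k$). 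Since the Fourier coefficients enter with nonnegative weight, \eqref{H} follows as soon as we prove the termwise bound
\[
(1-s)\,\lambda_k^s\le c_N\,(1-t)\,\lambda_k^t\qquad\text{for every }k\ge0.
\]
For $k=0$ this is void since $\lambda_0^\alpha=0$, and for $s=t$ it holds with any $c_N\ge1$; so we are left with $k\ge1$ and $0\le s<t<1$.

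The key point is the uniform estimate
\[
\frac{1}{c_N}\,k^{1+\alpha}\le(1-\alpha)\,\lambda_k^\alpha\le c_N\,k^{1+\alpha}\qquad\text{for all }k\ge1,\ \alpha\in[0,1),
\]
with $c_N$ depending only on $N$. Once this is available, since $k\ge1$ and $s\le t$ give $k^{1+s}\le k^{1+t}$, we obtain
\[
(1-s)\lambda_k^s\le c_N k^{1+s}\le c_N k^{1+t}\le c_N^2(1-t)\lambda_k^t,
\]
which is the required termwise inequality. To prove the uniform estimate I would use either the explicit expression of $\lambda_k^\alpha$ in terms of Gamma functions recalled in \cite{labandadei5} (see also \cite{Sam}) together with Stirling's formula, or, equivalently, the integral representation
\[
\lambda_k^\alpha=c_N\int_{-1}^{1}\bigl(1-G_k(\tau)\bigr)\,(1+\tau)^{\frac{N-3}{2}}\,(1-\tau)^{-\frac{3+\alpha}{2}}\,d\tau,
\]
coming from the addition formula for spherical harmonics, where $G_k$ is the degree-$k$ ultraspherical (Gegenbauer) polynomial normalized by $G_k(1)=1$ and satisfying $|G_k|\le1$ on $[-1,1]$. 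In this second route one splits the integral at $1-\tau\sim k^{-2}$: on the near-diagonal part one uses the Taylor expansion of $G_k$ at $\tau=1$, i.e. $1-G_k(\tau)\approx G_k'(1)(1-\tau)$ for $1-\tau\lesssim k^{-2}$ with $G_k'(1)\sim k^2$ and $G_k''(1)=O(k^4)$, which yields a contribution of exact order $(1-\alpha)^{-1}k^{1+\alpha}$; on the remaining part the bound $|1-G_k|\le2$ gives a contribution of order at most $k^{1+\alpha}$. Multiplying through by $(1-\alpha)$ produces the two-sided bound. For the base case $k=1$ one may instead simply combine Proposition~\ref{prola} (which gives $(1-\alpha)\lambda_1^\alpha\ge1/c_N$) with $\lambda_1^\alpha=\alpha(N-\alpha)P_\alpha(B)/P(B)$ and \eqref{s1}--\eqref{s0} (which bound $(1-\alpha)\lambda_1^\alpha$ from above, the relevant limits being $N\omega_N$ as $\alpha\to0^+$ and $N(N-1)\omega_N$ as $\alpha\to1^-$).

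The main obstacle is exactly the uniform lower bound $(1-\alpha)\lambda_k^\alpha\ge c_N^{-1}k^{1+\alpha}$ as $\alpha\to1^-$, that is, the nondegeneracy of the renormalized seminorm $(1-\alpha)[\,\cdot\,]^2_{H^{(1+\alpha)/2}(\partial B)}$. The soft estimates at hand are all insufficient: monotonicity of the kernel in $\alpha$ only gives $\lambda_k^s\le 2^{t-s}\lambda_k^t$; Proposition~\ref{prola} only gives the $k$-independent bound $(1-\alpha)\lambda_k^\alpha\ge c_N^{-1}$; and interpolation between the $s$- and $t$-seminorms on $\partial B$ (in the spirit of Lemma~\ref{int_est}) only controls $\lambda_k^s$ by a sublinear power of $\lambda_k^t$. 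None of these reproduces the precise growth $\lambda_k^\alpha\sim(1-\alpha)^{-1}k^{1+\alpha}$, and it is precisely this rate that is needed to absorb the factor $1-t$ when $s$ stays fixed and $t\to1$. Thus the proof genuinely relies on the explicit spectral information for $\I_\alpha$, whose dependence on the parameters is in the end only through $N$ — in keeping with the parameter-robustness built into the normalizations of \eqref{Fst}.
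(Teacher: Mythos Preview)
Your reduction via \eqref{armonic} to the termwise eigenvalue inequality $(1-s)\lambda_k^s\le c_N(1-t)\lambda_k^t$ is exactly the paper's opening move, and your route from there---proving the sharp two-sided bound $(1-\alpha)\lambda_k^\alpha\asymp k^{1+\alpha}$ via the Gegenbauer integral (or, equivalently, the Gamma-function formula plus Stirling)---is correct and yields the claim.

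The paper takes a shorter path after the spectral reduction: it writes $\lambda_k^s=(\lambda_k^s/\lambda_k^t)\,\lambda_k^t$, bounds the ratio by its value at $k=1$, and then applies Proposition~\ref{prola} twice---the lower bound $\lambda_1^t\ge 1/(c_N(1-t))$ produces the factor $(1-t)$, and the explicit formula $\lambda_1^s=s(N-s)P_s(B)/P(B)$ combined with the uniform bound $(1-s)sP_s(B)\le c_N$ absorbs the $(1-s)$. So the paper avoids any asymptotic analysis of $\lambda_k^\alpha$ in $k$; it only needs Proposition~\ref{prola} and the monotonicity of $k\mapsto\lambda_k^s/\lambda_k^t$ (which itself comes from the same Gamma-function expression you cite). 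Your approach is more laborious but also more informative: the growth rate $(1-\alpha)\lambda_k^\alpha\asymp k^{1+\alpha}$ is a useful fact in its own right and makes the mechanism completely transparent, whereas the paper's ratio trick is quicker but opaque. Your closing discussion of why monotonicity-in-$\alpha$, Proposition~\ref{prola} alone, or interpolation cannot suffice is accurate, but once you have committed to the explicit spectral computation it is not needed for the proof and could be trimmed.
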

\begin{proof}
By \eqref{armonic} and using the estimate for $\lambda_k$ established in Proposition~\ref{prola} we get
\begin{eqnarray*}
(1-s)[u]^2_{H^\frac{1+s}{2}(\partial B)} &=& (1-s)\sum_{k=0}^\infty \sum_{i=0}^{d(k)}\lambda_k^s a_k^i(u)^2 =
(1-s)\sum_{k=0}^\infty \sum_{i=0}^{d(k)}\lambda_k^{s-t}\lambda_k^{t} a_k^i(u)^2\\
&\leq & (1-s) \lambda_1^{s-t}\sum_{k=0}^\infty \sum_{i=0}^{d(k)}\lambda_k^{t} a_k^i(u)^2\\
&\leq & (1-s) \lambda_1^{s}c_N (1-t)\sum_{k=0}^\infty \sum_{i=0}^{d(k)}\lambda_k^{t} a_k^i(u)^2\\
&=& (1-s)s(N-s)\frac{P_s(B)}{P(B)}c_N (1-t)[u]^2_{H^\frac{1+t}{2}(\partial B)} \\
&\leq& c_N(1-t)[u]^2_{H^\frac{1+t}{2}(\partial B)}.
\end{eqnarray*}
\end{proof}
\begin{remark}\rm
We note that the result established in the previous proposition remains true also in the case $t=1$. Indeed, since
$$
\lim_{t\to 1^-}(1-t)[u]^2_{H^\frac{1+t}{2}(\partial B)}=[u]^2_{H^1(\partial B)}
$$ 
as established in \cite[Cor. 2]{BBM01}, we can pass to the limit $t\to1^-$ in \eqref{H}.
\end{remark}
%From Proposition \ref{prola} 
%\begin{lemma}
%Let $|E|=m$. Then 
%\begin{equation}\label{basicestimate}
%P_s(E)\le C(N,s)m^{1-s}P(E)^s,
%\end{equation}
%for some constant~$C(N,s)>0$.
%\end{lemma}

%Some interesting consequences of \eqref{basicestimates}
%are listed below in Corollaries~\ref{CC} and~\ref{CC2}:

\section{Preliminary estimates on the energy functional}\label{section2}

In the following we shall consider parameters $s,t\in (0,1)$ 
satisfying 
\begin{equation}\label{deltazero}
t-s\ge \delta_0>0\,.
\end{equation}
All the constants in this work, unless differently specified, will depend only on $N$ and $\delta_0$,
so that it will be possible to pass to the limits in a straightforward way
as $s\to 0^+$ or $t\to 1^-$.

\begin{proposition}\label{CC}
There exists $c_0=c_0(N,\delta_0)$ such that,
for any $E\subset\R^N$ and $0<s<t<1$ satisfying \eqref{deltazero},
it holds
\begin{equation}\label{stima_sotto_F}
 \F(E)\ge \frac{(1-t)P_t(E)}{2}-c_0 |E|.
 \end{equation}
%\begin{equation}\label{c0}
%c_0:= \left[C(N)\left(1-\frac st\right)^{-1}2^{\frac st}\right]^{\frac{t}{t-s}} >0\end{equation}
%with $C(N)$ as in Lemma~\ref{int_est}.
\end{proposition}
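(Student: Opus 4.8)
The plan is to bound $\F(E)=(1-t)P_t(E)-sP_s(E)$ from below by absorbing the negative term $sP_s(E)$ into a fraction of the positive term $(1-t)P_t(E)$ plus a multiple of $|E|$, using the interpolation estimate of Lemma~\ref{int_est}. The relevant inequality there is
\[
P_s(E)\le c_N\,\frac1s\left(1-\frac st\right)^{-1}|E|^{1-\frac st}(1-t)^{\frac st}P_t(E)^{\frac st},
\]
so that
\[
sP_s(E)\le c_N\left(1-\frac st\right)^{-1}\Bigl(|E|^{1-\frac st}\Bigr)\Bigl((1-t)P_t(E)\Bigr)^{\frac st}.
\]
Setting $\theta:=s/t\in(0,1)$, this is of the form $sP_s(E)\le c_N\left(1-\theta\right)^{-1}A^{1-\theta}B^{\theta}$ with $A=|E|$ and $B=(1-t)P_t(E)$.

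The key step is then a weighted Young inequality: for $\theta\in(0,1)$ and any $\eta>0$,
\[
A^{1-\theta}B^{\theta}\le (1-\theta)\,\eta^{-\frac{\theta}{1-\theta}}A+\theta\,\eta\,B.
\]
Choosing $\eta$ so that the coefficient of $B$ equals $\tfrac12$, i.e. $\eta=\tfrac{1}{2\theta}\cdot\tfrac{(1-\theta)}{c_N}$ after accounting for the outer factor $c_N(1-\theta)^{-1}$, gives $sP_s(E)\le \tfrac12(1-t)P_t(E)+c(\theta)|E|$, where the constant $c(\theta)$ collects all the $\theta$-dependent factors. Therefore
\[
\F(E)=(1-t)P_t(E)-sP_s(E)\ge \frac{(1-t)P_t(E)}{2}-c(\theta)|E|.
\]
It remains to check that $c(\theta)$ is bounded by a constant $c_0=c_0(N,\delta_0)$ uniformly over the admissible range of $s,t$. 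Under \eqref{deltazero} we have $t-s\ge\delta_0$, hence $1-\theta=1-\tfrac st=\tfrac{t-s}{t}\ge\tfrac{t-s}{1}\ge\delta_0$ (using $t<1$) and also $\theta=\tfrac st<1$; since $c(\theta)$ is of the form (constant)$\cdot(1-\theta)^{-\frac{1}{1-\theta}}\theta^{-\frac{\theta}{1-\theta}}(1-\theta)^{-1}$ up to $c_N$ factors, and the exponents $\tfrac{1}{1-\theta}$ and $\tfrac{\theta}{1-\theta}$ are controlled once $1-\theta\ge\delta_0$, this quantity is bounded above by some $c_0(N,\delta_0)$. (One can, for instance, use $x^{-a}\le \max\{1,\delta_0^{-1/\delta_0}\}$ type crude bounds for $x\in(0,1)$, $a\le 1/\delta_0$.)

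The main obstacle is purely bookkeeping: keeping track of how the $1/s$ prefactor, the $(1-s/t)^{-1}$ factor, and the Young-inequality constant combine, and verifying that the resulting constant depends only on $N$ and $\delta_0$ rather than blowing up as $s\to 0$ or as $t-s\to 0$ within the allowed regime. The delicate point is that $1/s$ appears in Lemma~\ref{int_est}, but it is multiplied by $s$ in $sP_s(E)$, so it cancels; what genuinely needs a lower bound is $1-s/t$, and this is exactly what \eqref{deltazero} together with $t<1$ provides. No further analytic input is needed beyond Lemma~\ref{int_est} and elementary inequalities, so once the constants are organized the proof is short.
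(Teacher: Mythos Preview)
Your proposal is correct and follows essentially the same route as the paper: both start from the interpolation estimate of Lemma~\ref{int_est}, apply Young's inequality with conjugate exponents $\tfrac{t}{t-s}$ and $\tfrac{t}{s}$ (your $\tfrac{1}{1-\theta}$ and $\tfrac{1}{\theta}$) to split the product $|E|^{1-s/t}\bigl((1-t)P_t(E)\bigr)^{s/t}$, and then observe that the resulting constant depends only on $N$ and a lower bound on $t-s$. Your explicit formula for $c(\theta)$ has minor sign/exponent slips (the correct form is $c_N(2c_N\theta/(1-\theta))^{\theta/(1-\theta)}$), but your identification of $1-\theta\ge\delta_0$ as the controlling quantity is exactly right and the conclusion stands.
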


\begin{proof} %Let~$C$ be the constant in \eqref{basicestimate}.
Set $m:=|E|$. We apply Young inequality with exponents~$\frac{t}{t-s}$
and~$\frac{t}{s}$ to the right hand side of \eqref{basicestimates} getting
\begin{eqnarray*}
c_N\frac 1 s\left(1-\frac st\right)^{-1}|E|^{1-\frac st}(1-t)^{\frac st} P_t(E)^{\frac st}&=& \left[ c_N\frac
{2^{\frac st}}{ s}\left(1-\frac st\right)^{-1} m^{1-\frac st}\right] \big[ 2^{-1} (1-t)P_t(E)\big]^{\frac st}\\
&\le& \left[ c_N\frac{2^{\frac st}}{ s}\left(1-\frac st\right)^{-1} m^{1-\frac
st}\right]^{\frac{t}{t-s}}+\frac{(1-t)P_t(E)}{2}.
\end{eqnarray*}
Thus~\eqref{basicestimates} gives that 
\begin{eqnarray*}
\F(E)&=&
 (1-t)P_t(E)-sP_s(E)\ge (1-t)P_t(E)-c_N\left( 1-\frac st\right)^{-1}|E|^{1-\frac s t}(1-t)^\frac s tP_t(E)^\frac s
t 
\\ 
&\ge& (1-t)P_t (E) -
\left[2^{\frac st} c_N\left(1-\frac st\right)^{-1} m^{1-\frac st}\right]^{\frac{t}{t-s}}-\frac{(1-t)P_t(E)}{2}
\\
&=& \frac{(1-t)P_t(E)}{2} - 
\left[2^{\frac st} c_N\frac{t}{t-s} \right]^{\frac{t}{t-s}}
m
\end{eqnarray*}
and this concludes the proof.
\end{proof}

%\begin{remark}
%{\rm Note that when $t\to 1^-$, we get
%$$
%\F(E)\ge \frac{P(E)}{2}-c_0 m,
%$$
%with 
%$$
%c_0=\left[\frac{C(N)2^s}{1-s}\right]^{\frac{1}{1-s}}, \quad \forall\,\,\, s\in (0,1).
%$$
%Conversely,
%letting $s\to 0^+$ we obtain 
%$$
%\F(E)\ge \frac{(1-t)P_t(E)}{2}-C(N) m,
%$$ 
%for any $t\in (0,1)$.}
%\end{remark}

\begin{corollary}\label{CC2}
Let $|E|=m$. Then both~$P_t(E)$ and~$P_s(E)$
are bounded above by quantities only depending on~$m$ and~$\F(E)$.
More explicitly
\begin{eqnarray}
\label{Y1} && (1-t)P_t(E)\le 2(\F(E)+c_0 m)\\
\label{Y2} {\mbox{and }}&& sP_s(E) \le c_0^{1-\frac st}m^{1-\frac s t} (\F(E)+c_0m)^{\frac s t},\end{eqnarray}
with~$c_0$ as in Proposition~\ref{CC}.
\end{corollary}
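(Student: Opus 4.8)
The plan is to read off both bounds directly from Proposition~\ref{CC} and the interpolation estimate of Lemma~\ref{int_est}. First I would observe that the estimate \eqref{Y1} is merely a rearrangement of \eqref{stima_sotto_F}: since $\F(E)\ge \frac{(1-t)P_t(E)}{2}-c_0m$, moving the term $c_0m$ to the left and multiplying by $2$ gives $(1-t)P_t(E)\le 2(\F(E)+c_0m)$. This requires nothing beyond Proposition~\ref{CC}, so it is immediate.

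For \eqref{Y2} I would feed the bound \eqref{Y1} into the interpolation inequality \eqref{basicestimates}. Concretely, from Lemma~\ref{int_est} we have
\[
sP_s(E)\le c_N\left(1-\frac st\right)^{-1}|E|^{1-\frac st}\big((1-t)P_t(E)\big)^{\frac st},
\]
and substituting $(1-t)P_t(E)\le 2(\F(E)+c_0m)$ yields
\[
sP_s(E)\le c_N\,2^{\frac st}\left(1-\frac st\right)^{-1} m^{1-\frac st}\big(\F(E)+c_0m\big)^{\frac st}.
\]
Since $s/t\le 1$ and $t-s\ge\delta_0$, the prefactor $c_N 2^{s/t}(1-s/t)^{-1}$ is bounded by a constant depending only on $N$ and $\delta_0$; absorbing it (and noting it is no larger than $c_0^{1-s/t}$ after possibly enlarging $c_0$, consistently with the convention that $c_0$ may change from line to line) gives exactly the stated form $sP_s(E)\le c_0^{1-\frac st}m^{1-\frac st}(\F(E)+c_0m)^{\frac st}$.

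There is essentially no obstacle here; the only mild care needed is bookkeeping of the constants, to make sure the final coefficient can indeed be written in the normalized shape $c_0^{1-s/t}$ with $c_0=c_0(N,\delta_0)$ and that the bound is stable as $s\to0^+$ and $t\to1^-$ (for $t=1$ one uses the version \eqref{basicestimate} of the interpolation estimate noted in the remark following Lemma~\ref{int_est}, together with the $t=1$ case of Proposition~\ref{CC}). In particular, when $s\to0^+$ the right-hand side of \eqref{Y2} tends to $c_0 m$, which is the expected bound $|E|=m$ up to a constant, and the argument degenerates gracefully.
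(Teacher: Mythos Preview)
Your proof is correct and follows exactly the same route as the paper's own argument: deduce \eqref{Y1} by rearranging Proposition~\ref{CC}, then plug \eqref{Y1} into the interpolation estimate~\eqref{basicestimates} to obtain \eqref{Y2}. The only difference is that you spell out the constant bookkeeping, which the paper leaves implicit.
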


\begin{proof} We obtain~\eqref{Y1} easily from Proposition~\ref{CC}.
Then~\eqref{Y2} follows from~\eqref{basicestimates} %(or \eqref{basicestimate} in the case $t=1$) 
and~\eqref{Y1}.\end{proof}

Now we define the isovolumetric function $\phi:(0,+\infty)\to \R$ as
$$
\phi(m)=\inf_{|E|=m}\F (E)\qquad m\in (0,+\infty).
$$
A general estimate on~$\phi(m)$ goes as follows:

\begin{lemma}\label{lema} We have
\begin{equation}\label{X1}
-c_0 m\le\phi(m)\le c_1 m^{\frac{N-t}{N}} \left(1 -\frac{c_2}{c_1} m^{\frac{t-s}N}\right),
\end{equation}
with~$c_0$ as in Proposition~\ref{CC} and 
\begin{equation}\label{c1c2}
c_1:=\frac{(1-t)P_t(B)}{|B|^{\frac{N-t}{N}}}\quad\text{and}\quad c_2:=\frac{s
P_s(B)}{|B|^{\frac{N-s}{N}}}.
\end{equation}
\end{lemma}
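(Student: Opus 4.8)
The plan is to prove the two inequalities in \eqref{X1} separately. The lower bound $\phi(m)\ge -c_0 m$ is immediate: by Proposition~\ref{CC}, for every $E$ with $|E|=m$ we have $\F(E)\ge \tfrac12(1-t)P_t(E)-c_0 m\ge -c_0 m$, since $P_t(E)\ge 0$; taking the infimum over all such $E$ gives the claim.

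For the upper bound, the idea is simply to test the functional on a competitor, namely the ball $B_m$ of volume $m$. Since $\phi(m)=\inf_{|E|=m}\F(E)\le \F(B_m)$, it suffices to compute $\F(B_m)$ exactly by scaling. Writing $B_m=\lambda B$ with $\lambda=(m/|B|)^{1/N}$, the scaling laws $P_t(\lambda B)=\lambda^{N-t}P_t(B)$ and $P_s(\lambda B)=\lambda^{N-s}P_s(B)$ give
\[
\F(B_m)=(1-t)\lambda^{N-t}P_t(B)-s\,\lambda^{N-s}P_s(B)
= c_1\,\lambda^{N-t}|B|^{\frac{N-t}{N}}-c_2\,\lambda^{N-s}|B|^{\frac{N-s}{N}},
\]
where $c_1,c_2$ are as in \eqref{c1c2}. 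Substituting $\lambda^N=m/|B|$, i.e. $\lambda^{N-t}|B|^{\frac{N-t}{N}}=m^{\frac{N-t}{N}}$ and $\lambda^{N-s}|B|^{\frac{N-s}{N}}=m^{\frac{N-s}{N}}$, we obtain $\F(B_m)=c_1 m^{\frac{N-t}{N}}-c_2 m^{\frac{N-s}{N}}$. Factoring out $c_1 m^{\frac{N-t}{N}}$ and using $m^{\frac{N-s}{N}}=m^{\frac{N-t}{N}}\cdot m^{\frac{t-s}{N}}$ yields exactly the right-hand side of \eqref{X1}.

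This proof is essentially a direct computation, so there is no real obstacle; the only mild care needed is (i) to note that for $s=0$ or $t=1$ the same scaling argument applies verbatim with $P_s(B)$ replaced by $N\omega_N|B|$ (so $c_2=N\omega_N$) or $(1-t)P_t(B)$ replaced by $N\omega_N P(B)$ (so $c_1=N\omega_N P(B)/|B|^{(N-1)/N}$), consistently with the definition of $\F$ in \eqref{Fst}, and (ii) to observe that the bound holds for every $m>0$ even though the bracketed factor $1-\tfrac{c_2}{c_1}m^{(t-s)/N}$ becomes negative for large $m$ — this merely reflects the fact, anticipated in the introduction, that at large volumes the ball has negative energy.
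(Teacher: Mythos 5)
Your proof is correct and matches the paper's argument: the lower bound follows immediately from Proposition~\ref{CC} since $P_t(E)\ge 0$, and the upper bound is obtained by testing $\F$ on the ball $B_m=\rho B$ with $\rho=(m/|B|)^{1/N}$ and using the scaling laws for $P_t$ and $P_s$, exactly as the paper does. Your additional remarks about the endpoint cases $s=0,\ t=1$ and the sign of the bracket for large $m$ are sensible but not needed for the statement.
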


\begin{proof}
Let us begin by proving the estimate from above of $\phi(m)$.
For this, we take the unit ball~$B$
we set~$\rho:=(m/|B|)^{1/N}$ and we consider the ball~$B(0,\rho)$ of radius~$\rho$.
Notice that~$|B(0,\rho)|=\rho^N |B|=m$,
$$ 
P_t(B(0,\rho))=\rho^{N-t} P_t(B)
=\frac{P_t(B)}{|B|^{\frac{N-t}{N}}}\, m^{\frac{N-t}N}
$$
and
$$
 P_s(B(0,\rho))=\rho^{N-s} P_s(B)
=\frac{P_s(B)}{|B|^{\frac{N-s}{N}}}\,m^{\frac{N-s}N}
.
$$
By minimality, we get, with $c_1$ and $c_2$ as in \eqref{c1c2},
 \[
  \phi(m)\le \F(B(0,\rho))=(1-t)P_t(B(0,\rho))-sP_s(B(0,\rho))
=c_1 m^{\frac{N-t}N} \left(1 -\frac{c_2}{c_1} m^{\frac{t-s}N}\right),
 \]
 that proves~\eqref{X1}.

The first inequality in~\eqref{X1} follows from Proposition~\ref{CC}.
\end{proof}

%\begin{remark}
%{\rm Note that $c_1$ and $c_2$ goes to $c(N)$ as $t\to 1^-$ and $s\to 0^+$ respectively. }
%\end{remark}
\begin{remark}
{\rm We recall the fractional isoperimetric inequality, which holds true for any measurable set $E$ such that
$|E|<+\infty$: 
\begin{equation}\label{iso_ineq2}
|E|^{\frac{N-t}{N}} \leq c_N t(1-t)P_t(E).
\end{equation}
For the optimal constant $c_N$ we refer to \cite{FS08} (see in particular Equations~(1.10) and (4.2) there).}
\end{remark}
\begin{lemma}\label{positivenegative}
 There exist $m_0=m_0(N,\delta_0)$ and $m_1=m_1(N,\delta_0)$ such that:
\begin{eqnarray}
\label{E1}&&{\mbox{if $m>m_1$, then $\phi(m)<0$;}}\\ 
\label{E2}&&{\mbox{if $m\in(0,m_0)$, then $\phi(m)\ge \frac{c_N}{t}
m^{\frac{N-t}N}>0$.}} 
\end{eqnarray}
%% Explicitly, one can take
%% $$ m_1:=|B(0,1)|\,\left( \frac{P(B(0,1))}{P_s(B(0,1))}\right)^{1/(1-s)}
%% \ {\mbox{ and }} \
%% m_0 := .........$$
Moreover 
\begin{equation}\label{E3}
 \lim_{m\to 0^+}\phi(m)=0.
\end{equation}
\end{lemma}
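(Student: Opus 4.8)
The plan is to establish the three assertions \eqref{E1}, \eqref{E2}, \eqref{E3} as consequences of the two-sided bound on the isovolumetric function already contained in Lemma~\ref{lema}, together with the fractional isoperimetric inequality \eqref{iso_ineq2}.

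For \eqref{E1}, I would use the upper bound in \eqref{X1}. Since the bracketed factor $1-\frac{c_2}{c_1}m^{\frac{t-s}{N}}$ becomes negative precisely when $m^{\frac{t-s}{N}}>c_1/c_2$, and since $c_1$ and $c_2$ are explicit ratios of $(1-t)P_t(B)$, $sP_s(B)$ to powers of $|B|$, one checks (using Proposition~\ref{prola} or the elementary scaling bounds for $P_\alpha(B)$, plus \eqref{deltazero}) that $c_1/c_2$ is bounded above by a constant depending only on $N$ and $\delta_0$. Setting $m_1:=(c_1/c_2)^{N/(t-s)}$, or any fixed upper bound thereof, gives $\phi(m)<0$ for $m>m_1$. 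I should make sure the constant is uniform in $s,t$ in the admissible range; this is where the normalizations in \eqref{Fst} and the bound $t-s\ge\delta_0$ are used, exactly as the authors emphasize.

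For \eqref{E2}, I would start from the definition $\F(E)=(1-t)P_t(E)-sP_s(E)$ and apply Lemma~\ref{int_est} (i.e.\ \eqref{basicestimates}) to absorb the $P_s$ term: as in the proof of Proposition~\ref{CC}, for $|E|=m$ we have $sP_s(E)\le c_N\big(1-\tfrac st\big)^{-1}m^{1-\frac st}\big((1-t)P_t(E)\big)^{\frac st}$, and combining this with the fractional isoperimetric inequality $(1-t)P_t(E)\ge \frac{c_N}{t}m^{\frac{N-t}{N}}$ gives $sP_s(E)\le C(N,\delta_0)\,m^{\frac{N-s}{N}}\cdot\big((1-t)P_t(E)\big)^{\frac st-1}\cdot(1-t)P_t(E)$; writing things in terms of $(1-t)P_t(E)$ one sees that the subtracted term is of lower order in $m$ relative to $\frac{c_N}{t}m^{\frac{N-t}{N}}$ as $m\to 0^+$, because $\frac{N-s}{N}>\frac{N-t}{N}$ while the extra negative power of $P_t$ is harmless after again invoking the isoperimetric lower bound on $P_t$. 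Hence there is $m_0=m_0(N,\delta_0)$ so that for $m<m_0$ the term $sP_s(E)$ is at most half of $(1-t)P_t(E)$, giving $\F(E)\ge\frac12(1-t)P_t(E)\ge\frac{c_N}{2t}m^{\frac{N-t}{N}}$; adjusting the constant yields \eqref{E2}. Taking the infimum over $|E|=m$ gives the bound on $\phi(m)$.

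Finally, \eqref{E3} is immediate by squeezing: the lower bound $\phi(m)\ge -c_0 m$ from Proposition~\ref{CC} and the upper bound $\phi(m)\le c_1 m^{\frac{N-t}{N}}\big(1-\frac{c_2}{c_1}m^{\frac{t-s}{N}}\big)\le c_1 m^{\frac{N-t}{N}}$ from \eqref{X1} both tend to $0$ as $m\to 0^+$ (here $\frac{N-t}{N}>0$ since $t<1$, and in the case $t=1$ one uses instead the bound $\phi(m)\le \F(B(0,\rho))$ directly, which is $O(m^{\frac{N-1}{N}})$). I expect the only real point requiring care to be the uniformity of the constants $m_0,m_1$ in $s$ and $t$: one must keep track that every application of Lemma~\ref{int_est}, the isoperimetric inequality, and the estimates on $P_\alpha(B)$ produces constants depending on $N$ and $\delta_0$ only, which is guaranteed by \eqref{deltazero} and the choice of normalizing factors in the definition of $\F$.
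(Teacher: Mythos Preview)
Your proposal is correct and follows essentially the same route as the paper: \eqref{E1} and \eqref{E3} are read off directly from the two-sided bound \eqref{X1}, and \eqref{E2} comes from absorbing $sP_s(E)$ into $(1-t)P_t(E)$ and then applying the fractional isoperimetric inequality \eqref{iso_ineq2}. The only difference is that for \eqref{E2} the paper invokes Proposition~\ref{CC} directly---which already gives $\F(E)\ge \tfrac12(1-t)P_t(E)-c_0 m$---whereas you redo the absorption argument inline from Lemma~\ref{int_est}; this is a cosmetic difference, and the paper's version is slightly cleaner since it avoids tracking the intermediate exponents (your displayed exponent $m^{\frac{N-s}{N}}$ should in fact be $m^{\frac{t-s}{N}}$ once the isoperimetric lower bound on $P_t$ is substituted into the negative power).
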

\begin{proof}
We have that \eqref{E1} and~\eqref{E3} plainly follow from~\eqref{X1}.

Now we prove~\eqref{E2}. For this, we use Proposition~\ref{CC} and the fractional isoperimetric inequality in the
form \eqref{iso_ineq2} to obtain that, if~$|E|=m$,
$$ \F(E)\ge \frac{(1-t)P_t(E)}{2}-c_0 m \ge \frac{m^{\frac{N-t}N}}{2 \,c_N\, t}
-c_0 m = \frac{m^{\frac{N-t}N}}{2\,c_N\, t} \left( 1-2c_0\,c_N\, t\, m^{\frac t N}
\right).$$
In particular, if~$m$ is small enough, we have that
$$\F(E)\ge \frac{m^{\frac{N-t}N}}{4\,c_N\, t}$$
and this implies~\eqref{E2}.
\end{proof}

%\begin{remark}\label{stima_diff_simm}\rm
%Notice that by the previous lemma if $m>m_1$ and $F$ is a minimizer of \eqref{problem} then $P(F)<P_s(F)$. Hence
% proceeding as in the lemma above, using the calculations, exploiting to prove that $\phi(m)>0$ for $m<m_0$, that
% are independent from the size of $m$), we get, thanks to the isoperimetric inequality in quantitative form (proved
% in \cite{FMP08}), that
%\[ \frac{|F\Delta B|^2}{m^{(N+1)/N}}\le C m.\]
%where $F\Delta B=(F\setminus B) \cup (B\setminus F)$ denotes the symmetric difference of sets $F$ and $B$ and $B$
% is the ball of volume $m$.
%\end{remark}

\begin{lemma}\label{lemstim}
Let $m_1$ be as in Lemma \ref{positivenegative}, and let $F$ 
be a minimizer of $\F$ among sets of measure $m>m_1$. We have
 \begin{equation}\label{disuguaglianze}
  \frac{c_N}{t}\, m^{\frac{N-t}{N}}\le (1-t)P_t(F)< c_0\, m\quad and \quad
\frac{c_N}{t}\,m^{\frac{N-t}{N}}< s P_s(F)\le c_0\,  m,
 \end{equation}
 for some $c_0>0$.
\end{lemma}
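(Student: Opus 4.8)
The plan is to combine the two elementary global bounds on the energy with the fact that $F$ is a minimizer and that $m>m_1$, so that $\phi(m)<0$ by \eqref{E1}. First I would record the upper bounds on $P_t(F)$ and $P_s(F)$: since $F$ is a minimizer, $\F(F)=\phi(m)<0$, and Corollary~\ref{CC2} (specifically \eqref{Y1}) gives $(1-t)P_t(F)\le 2(\F(F)+c_0m)<2c_0m$. Similarly \eqref{Y2} bounds $sP_s(F)\le c_0^{1-s/t}m^{1-s/t}(\F(F)+c_0m)^{s/t}<c_0^{1-s/t}m^{1-s/t}(c_0m)^{s/t}=c_0m$. (A harmless renaming of the constant $c_0$ — which the paper explicitly allows — absorbs the factor $2$ and the $2^{s/t}$-type constants into a single $c_0=c_0(N,\delta_0)$.)

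For the lower bounds, I would use that $\F(F)<0$ forces $sP_s(F)>(1-t)P_t(F)$, and then invoke the fractional isoperimetric inequality. From Proposition~\ref{CC}, $0>\F(F)\ge \tfrac12(1-t)P_t(F)-c_0m$, which already gives $(1-t)P_t(F)<2c_0m$ again, but more usefully I would apply \eqref{iso_ineq2}: $(1-t)P_t(F)\ge \tfrac{1}{c_Nt}m^{(N-t)/N}\ge \tfrac{c_N}{t}m^{(N-t)/N}$ after relabeling $c_N$. This is the left inequality of the first chain. For $P_s(F)$, since $\F(F)<0$ means $sP_s(F)>(1-t)P_t(F)$, the same lower bound on $(1-t)P_t(F)$ transfers: $sP_s(F)>\tfrac{c_N}{t}m^{(N-t)/N}$. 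Thus all four inequalities in \eqref{disuguaglianze} are established.

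There is essentially no obstacle here: the lemma is a bookkeeping consequence of Proposition~\ref{CC}, Corollary~\ref{CC2}, the isoperimetric inequality \eqref{iso_ineq2}, and the sign information $\phi(m)<0$ for $m>m_1$ from Lemma~\ref{positivenegative}. The only mild subtlety is making sure all constants genuinely depend only on $N$ and $\delta_0$ (not on $s,t$ separately); this is automatic because $c_0$ in Proposition~\ref{CC} and the constants in Lemma~\ref{int_est} were arranged to have exactly this dependence under the standing assumption \eqref{deltazero}, and the exponent combination $1-s/t$ together with $s/t\in(0,1)$ keeps the powers of $m$ and $c_0$ under control. One should also note the chain of strict versus non-strict inequalities: the isoperimetric bound is non-strict, while the bounds coming from $\F(F)=\phi(m)<0$ are strict, which matches the way \eqref{disuguaglianze} is stated.
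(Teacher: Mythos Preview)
Your proof is correct and follows essentially the same approach as the paper. The only cosmetic difference is that you obtain the upper bound on $(1-t)P_t(F)$ by quoting Corollary~\ref{CC2} directly (plugging in $\F(F)<0$), whereas the paper reproves that bound inline by chaining $(1-t)P_t(F)<sP_s(F)\le c_0^{(t-s)/t}m^{1-s/t}[(1-t)P_t(F)]^{s/t}$ from \eqref{basicestimates} and solving for $(1-t)P_t(F)$; the ingredients (isoperimetric inequality, interpolation estimate, and the sign $\phi(m)<0$) are identical.
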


\begin{proof}
By Lemma \ref{positivenegative} we know that $(1-t)P_t(F)<sP_s(F)$, hence  
from \eqref{basicestimates} and from the fractional isoperimetric inequality \eqref{iso_ineq2} we get
\[
\frac{m^\frac{N-t}{N}}{c_N\,t}\le (1-t)P_t(F)< sP_s(F) \le c_0^{\frac{t-s}{t}} 2^{-\frac st }m^{1-\frac
st}[(1-t)P_t(F)]^{\frac st}
\]
with $c_0$ given in Proposition \ref{CC}. 
Then $(1-t)P_t(F)<c_0\, 2^{-\frac{s}{t-s}}\, m$.
This and~\eqref{basicestimates}
also implies the desired bound on~$sP_s(F)$.
\end{proof}

% \begin{remark}\label{rem_II}\rm
% Observe that if $s<1/N$, \eqref{P} also implies $P(E)\leq c_2 m^{\frac{N-1}{N}}$, with $c_2=c_2(N,s)$. Hence by
% the isoperimetric inequality 
% \begin{equation}\label{II}
% c_1 m^{\frac{N-1}{N}}\leq P(E)\leq c_2 m^{\frac{N-1}{N}}
% \end{equation}
% for $m$ sufficiently small, $0<c_1\leq c_2$ that depend only on $N$ and $s$.
% \end{remark}
\begin{remark}\rm
By inspecting the proof of the Lemma \ref{positivenegative} we obtain explicit
estimates for $m_0$ and $m_1$:
\begin{eqnarray*}
m_0 &\geq& \left[4c_0 \,c_N\, t\right]^{-\frac N t}
\ =\ \left[4\left(\frac{c_N\, t\, 2^{s/t}}{t-s}\right)^{\frac{t}{t-s}} \,c_N\, t\right]^{-\frac N t}
\\
\\
m_1 &\le& \left(\frac{c_1}{c_2}\right)^{\frac{N}{t-s}}\ =\ 
\left[\frac{(1-t)P_t(B)}{sP_s(B)}\right]^{\frac{N}{t-s}} |B|.
\end{eqnarray*}
%In this paper, the explicit values of~$m_0$ and~$m_1$
%are not needed: nevertheless they may easily computed
%explicitly in the proof of Lemma~\ref{positivenegative}.
Moreover, the first inequality in the second formula in \eqref{disuguaglianze}
 \[
  \frac{c_N}{t}|F|^{\frac{N-t}{N}}<sP_s(F)
 \]
 entails that
$|F|\to\infty$ as $t\to0$ (and thus $\delta_0\to0$). Indeed, letting $t=s+\delta_0$, and using the fact that
$sP_s(F)\to N\omega_N|F|$ as $s\to0$, after an elementary computation we get that
\[
m_1\ge |F|\ge \left(\frac{c_N}{s+\delta_0}\right)^{\frac{N}{s+\delta_0}}.
\]
which gives also a lower bound on $m_1$ in terms of $s$ and $\delta_0$. Notice that if $t\to0$, then also $s$
and $\delta_0$ converge to $0$ and so  $m_1\to\infty$. Also it is not a direct consequence of our investigation, we
stress that it is natural to expect that also if only $s$ converges to $0$, then $m_1$ diverges to $+\infty$. 
\end{remark}
We state an elementary numerical inequality which will be useful in the proof of the forthcoming Proposition
\ref{nonoptimality}.
\begin{lemma}
Let $\gamma>0$ and $\lambda=(1+\gamma)^{1/N}$. Then, for any~$a$, $b\ge0$, it holds
\begin{equation}\label{Lz1}
(\l^{N-t}-1)a -(\l^{N-s}-1)b\le \gamma\,(a-b).
\end{equation} 
\end{lemma}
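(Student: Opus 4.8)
The plan is to reduce the inequality~\eqref{Lz1} to a statement about a single real function and check its sign. Since $\lambda = (1+\gamma)^{1/N}$ with $\gamma > 0$, we have $\lambda > 1$ and $\lambda^N = 1+\gamma$, so $\lambda^N - 1 = \gamma$. Rewriting the claim, we must show
\[
(\lambda^{N-t}-1)\,a - (\lambda^{N-s}-1)\,b \;\le\; (\lambda^N-1)\,(a-b),
\]
which, after rearranging all terms, is equivalent to
\[
(\lambda^N - \lambda^{N-t})\,a \;\ge\; (\lambda^N - \lambda^{N-s})\,b,
\]
i.e. $\lambda^{N-t}(\lambda^t - 1)\,a \ge \lambda^{N-s}(\lambda^s-1)\,b$.

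First I would observe that it suffices to prove the pointwise comparison of the two coefficients, namely
\[
\lambda^{N-s}(\lambda^s - 1) \;\le\; \lambda^{N-t}(\lambda^t - 1),
\]
because then, using $a \ge b \ge 0$ (note that when $a < b$ the original inequality~\eqref{Lz1} is not what we want — but in fact the intended application has $a \ge b$, or one argues by monotonicity; more carefully, since both coefficients are nonnegative and the left coefficient dominates, we get $\lambda^{N-t}(\lambda^t-1)\,a - \lambda^{N-s}(\lambda^s-1)\,b \ge \lambda^{N-s}(\lambda^s-1)(a-b) \ge 0$ when $a \ge b$, and otherwise one reverses the roles appropriately). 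The key analytic step is therefore to show that the function
\[
g(\tau) := \lambda^{N-\tau}(\lambda^{\tau} - 1) = \lambda^N - \lambda^{N-\tau}, \qquad \tau \in [0,1],
\]
is nondecreasing in $\tau$. But this is immediate: $\frac{d}{d\tau}\big(\lambda^N - \lambda^{N-\tau}\big) = \lambda^{N-\tau}\log\lambda \ge 0$ since $\lambda \ge 1$. Hence $g(s) \le g(t)$ whenever $s \le t$, which is exactly the coefficient comparison above.

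I do not expect any real obstacle here; the only point requiring a little care is the bookkeeping of signs when combining the coefficient inequality with the hypotheses $a,b \ge 0$. The cleanest route is: since $0 \le s < t$ and $\lambda \ge 1$ we have $0 \le \lambda^{N-s}(\lambda^s-1) \le \lambda^{N-t}(\lambda^t-1)$, so for all $a,b\ge 0$,
\[
\lambda^{N-t}(\lambda^t-1)\,a - \lambda^{N-s}(\lambda^s-1)\,b \;\ge\; \lambda^{N-s}(\lambda^s-1)\,(a-b),
\]
and since $\lambda^{N-s}(\lambda^s-1) = \lambda^N - \lambda^{N-s} \le \lambda^N - 1 = \gamma$ (using again monotonicity of $g$, now comparing $\tau = s$ with $\tau = N \ge 1 > s$, or simply $\lambda^{N-s} \ge 1$), we conclude that the left-hand side is $\ge \gamma(a-b)$ when $a \ge b$. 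Rearranging back gives precisely~\eqref{Lz1}. When $a \le b$ one instead bounds $\lambda^{N-t}(\lambda^t-1)\,a - \lambda^{N-s}(\lambda^s-1)\,b \le \lambda^{N-t}(\lambda^t-1)(a-b) \le \gamma(a-b)$, using $\lambda^{N-t}(\lambda^t-1) \le \lambda^N - 1 = \gamma$ and $a - b \le 0$. In either case~\eqref{Lz1} holds, completing the proof.
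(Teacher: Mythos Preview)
Your case $a\le b$ is wrong, and in fact the lemma as stated is \emph{false} without the extra hypothesis $a\ge b$. Take $a=0$, $b=1$ and any $s\in(0,1)$: then \eqref{Lz1} reads $-(\lambda^{N-s}-1)\le-\gamma=-(\lambda^N-1)$, i.e.\ $\lambda^{N-s}\ge\lambda^N$, which fails since $\lambda>1$. Your first step in that case, $g(t)a-g(s)b\le g(t)(a-b)$ (with $g(\tau)=\lambda^N-\lambda^{N-\tau}$), is equivalent to $g(t)\le g(s)$, the reverse of the monotonicity you just proved; the second step $g(t)(a-b)\le\gamma(a-b)$ also goes the wrong way when $a-b\le 0$ and $g(t)\le\gamma$. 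The paper's own argument contains the same hidden assumption: the bound $\frac{N-t}{N}(1+\gamma)^{-t/N}(a-b)\le(a-b)$ used there requires $a-b\ge 0$.

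For $a\ge b$ your monotonicity idea is correct and, once the write-up is straightened out, gives a cleaner proof than the paper's (which restricts to small $\gamma$ and differentiates in $\gamma$). But your chaining is tangled: from $g(t)a-g(s)b\ge g(s)(a-b)$ together with $g(s)\le\gamma$ and $a-b\ge 0$ you \emph{cannot} conclude the left side is $\ge\gamma(a-b)$; the inequality points the wrong way. The direct route is simply $g(t)a\ge g(s)a\ge g(s)b$ (using $g(t)\ge g(s)\ge 0$ and $a\ge b\ge 0$), which is exactly the rearranged form of \eqref{Lz1} and works for every $\gamma>0$. In the paper's only use of the lemma (Proposition~\ref{nonoptimality}) one has $a=(1-t)P_t(F_1)$, $b=sP_s(F_1)$ with $|F_1|$ small, so $a-b=\F(F_1)\ge\phi(|F_1|)>0$ by Lemma~\ref{positivenegative}, and the missing hypothesis is available there.
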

\begin{proof}
To prove \eqref{Lz1}, we notice that
$$ \lim_{\gamma\rightarrow0} (N-s)(1+\gamma)^{\frac{t-s}{N}}-(N-t)= t-s>0,$$
hence we may take~$\gamma$ small enough, such that
\begin{equation}\label{gs}
(N-s)(1+\gamma)^{\frac{t-s}{N}}-(N-t)\ge\frac{t-s}{2}.
\end{equation}
So we write
$$ f(\gamma):=\Big((1+\gamma)^{\frac{N-t}N}-1\Big)a -\Big(
(1+\gamma)^{\frac{N-s}N}-1\Big)b
=(\l^{N-t}-1)a -(\l^{N-s}-1)b,$$
and we notice that~$f(0)=0$ and
\begin{eqnarray*}
f'(\gamma) &=& {\frac{N-t}N} (1+\gamma)^{-\frac{t}N} a -
{\frac{N-s}N} (1+\gamma)^{-\frac{s}N} b \\&=&
{\frac{N-t}N} (1+\gamma)^{-\frac{t}N} (a-b)
-\frac{b(1+\gamma)^{-\frac{t}N}}{N} \big[ (N-s)(1+\gamma)^{\frac{t-s}{N}}-(N-t)\big]
\\ &\le& (a-b) -
\frac{b(1+\gamma)^{-\frac{t}N} (t-s)}{2N},
\end{eqnarray*}
thanks to~\eqref{gs}. In particular, $f'(\gamma)\le (a-b)$
and thus~$f(\gamma)\le \gamma\,(a-b)$, that establishes~\eqref{Lz1}.
% 
% {}From Bernoulli's inequality we get
% \[
% (1+\gamma)^{\frac{N-s}N} \geq 1+ \frac{N-s}{N}\gamma\,, 
% \]
% and we also have
% \[
% (1+\gamma)^{\frac{N-t}N} \leq 1+ \frac{N-s}{N}\gamma. 
% \]

\end{proof}

\begin{proposition}[Non-optimality criterion]\label{nonoptimality}
There exists $\eps=\eps(N,\delta_0)$ such that, 
if $F\subset\R^N$ can be written as  
$F=F_1\cup F_2$, with $|F_1\cap F_2|=0$, 
\begin{eqnarray}
&& \label{hp0} |F_2|\le \eps\min(1,|F_1|),\\
{\mbox{and }}&&\label{hp1}
 (1-t)[P_t(F_1)+P_t(F_2)-P_t(F)]\le \frac{\F(F_2)}{2},
\end{eqnarray}
then there exists a set $G$ with $|G|=|F|$ and $\F(G)<\F(F)$
(i.e., $F$ is not a minimizer).

In addition, we have that the set~$G$
is either a ball of volume $m$,
or a dilation of the set~$F_1$,
according to the following formula:
\begin{equation}\label{G}
G\,=\, \sqrt[N]{1+\frac{|F_2|}{|F_1|}}\; F_1
\end{equation}
\end{proposition}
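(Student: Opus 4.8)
\textbf{Proof plan for Proposition~\ref{nonoptimality}.}
The strategy is to compare $\F(F)$ with the energy of the competitor $G$ obtained by removing the small piece $F_2$ and dilating $F_1$ to restore the volume. First I would write, using Lemma~\ref{rmk1} (twice, for the $t$- and the $s$-perimeter) together with hypothesis~\eqref{hp1},
\begin{equation*}
\F(F)\ \ge\ \F(F_1)+\F(F_2)-(1-t)\bigl[P_t(F_1)+P_t(F_2)-P_t(F)\bigr]\ \ge\ \F(F_1)+\tfrac12\F(F_2),
\end{equation*}
where the first inequality uses that the $s$-perimeter only decreases under splitting (so the $-sP_s$ terms help us), i.e. $-sP_s(F)\ge -s[P_s(F_1)+P_s(F_2)]$. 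The point of~\eqref{hp1} is to absorb the ``bad'' cross term coming from $P_t$ into half of $\F(F_2)$. Next I would bound $\F(F_2)$ from below: by Proposition~\ref{CC}, $\F(F_2)\ge \tfrac12(1-t)P_t(F_2)-c_0|F_2|$, and then by the fractional isoperimetric inequality~\eqref{iso_ineq2} (and the smallness~\eqref{hp0} of $|F_2|$), $\F(F_2)\ge \tfrac{1}{4c_Nt}|F_2|^{(N-t)/N}\ge 0$ once $|F_2|\le\eps$ with $\eps=\eps(N,\delta_0)$ small; in particular $\F(F)\ge\F(F_1)$, but more usefully we retain the positive term $\tfrac12\F(F_2)$.

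Now set $\gamma:=|F_2|/|F_1|\le\eps$ and $\lambda:=(1+\gamma)^{1/N}$, and take $G=\lambda F_1$ as in~\eqref{G}, so that $|G|=\lambda^N|F_1|=|F_1|+|F_2|=|F|$. By scaling,
\begin{equation*}
\F(G)=(1-t)\lambda^{N-t}P_t(F_1)-s\lambda^{N-s}P_s(F_1)=\F(F_1)+(\lambda^{N-t}-1)(1-t)P_t(F_1)-(\lambda^{N-s}-1)sP_s(F_1).
\end{equation*}
Applying the numerical inequality~\eqref{Lz1} with $a=(1-t)P_t(F_1)$ and $b=sP_s(F_1)$ gives
\begin{equation*}
\F(G)\ \le\ \F(F_1)+\gamma\bigl[(1-t)P_t(F_1)-sP_s(F_1)\bigr]\ =\ (1+\gamma)\F(F_1).
\end{equation*}
Combining with the lower bound on $\F(F)$, it suffices to show $(1+\gamma)\F(F_1)<\F(F_1)+\tfrac12\F(F_2)$, i.e. $\gamma\,\F(F_1)<\tfrac12\F(F_2)$, i.e. $\tfrac{|F_2|}{|F_1|}\F(F_1)<\tfrac12\F(F_2)$. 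Here I would use $\F(F_1)\le 2((1-t)P_t(F_1))\le$ (by~\eqref{basicestimate}/Corollary~\ref{CC2} and the isoperimetric inequality) a quantity controlled by $|F_1|^{(N-t)/N}$, hence $\tfrac{|F_2|}{|F_1|}\F(F_1)\lesssim |F_2|\,|F_1|^{-t/N}\lesssim|F_2|$ since $|F_1|$ may be assumed bounded below — but this is exactly where~\eqref{hp0} asking $|F_2|\le\eps\min(1,|F_1|)$ enters: it makes $\gamma\le\eps/\!\max(1,|F_1|^{-1})$ small enough that $\gamma\F(F_1)\le C\eps|F_2|^{(N-t)/N}\cdot(\text{power})$ beats $\tfrac12\F(F_2)\ge\tfrac{1}{4c_Nt}|F_2|^{(N-t)/N}$. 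Choosing $\eps$ small depending only on $N$ and $\delta_0$ closes the argument; if $\F(F_1)\le 0$ the inequality $\gamma\F(F_1)<\tfrac12\F(F_2)$ is immediate since the right side is nonnegative, and one should also record the alternative competitor (the ball $B_m$) for the degenerate case $F_1=\emptyset$ or $\F(F_1)\le 0$, which is allowed by the ``either a ball\ldots or a dilation'' clause in the statement.

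\textbf{Main obstacle.} The delicate point is the last comparison $\gamma\,\F(F_1)<\tfrac12\F(F_2)$: both sides are small, and one must make sure the ratio $|F_2|/|F_1|$ times the (possibly large) energy $\F(F_1)$ is genuinely dominated by $\F(F_2)\sim|F_2|^{(N-t)/N}$, which has a \emph{worse} (smaller) power of $|F_2|$ than linear. The resolution is that $\F(F_1)$ is itself controlled by a sublinear power $|F_1|^{(N-t)/N}$ of its volume via Corollary~\ref{CC2} and~\eqref{iso_ineq2}, so $\gamma\F(F_1)\lesssim |F_2|\,|F_1|^{-t/N}$, and the constraint~\eqref{hp0} normalizing $|F_2|\le\eps$ and $|F_2|\le\eps|F_1|$ simultaneously forces this to be $\le C\eps^{t/N}|F_2|^{(N-t)/N}$, which is beaten by the isoperimetric lower bound on $\tfrac12\F(F_2)$ for $\eps$ small; all constants depend only on $N$ and $\delta_0$ because of~\eqref{deltazero}, so the bound is stable as $s\to0^+$, $t\to1^-$.
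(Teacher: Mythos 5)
Your strategy matches the paper's: same competitor $G=\lambda F_1$ with $\lambda=(1+\gamma)^{1/N}$, same numerical inequality~\eqref{Lz1}, and the same reduction to the comparison $\gamma\,\F(F_1) < \tfrac12\F(F_2)$. But there is a genuine gap in the closing step, and it is precisely the one you flag as the ``main obstacle''.

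You claim that $\F(F_1)\le (1-t)P_t(F_1)$ is ``controlled by $|F_1|^{(N-t)/N}$'' via Corollary~\ref{CC2} and the isoperimetric inequality~\eqref{iso_ineq2}. That derivation is backwards: the isoperimetric inequality gives a \emph{lower} bound $|F_1|^{(N-t)/N}\le c_N\,t(1-t)P_t(F_1)$, not an upper one, and Corollary~\ref{CC2} bounds $(1-t)P_t(F_1)$ only in terms of $\F(F_1)$ itself, which is circular. For a fixed volume $|F_1|$ the quantity $(1-t)P_t(F_1)$ can be arbitrarily large (take a wiggly boundary), so the bound you invoke simply does not hold unconditionally, and without it the comparison $\gamma\,\F(F_1)<\tfrac12\F(F_2)$ cannot be closed.

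What actually supplies the missing upper bound is the dichotomy with the ball, which in your write-up appears only as an afterthought for the ``degenerate'' case $F_1=\emptyset$ or $\F(F_1)\le 0$. It is needed in the main case too: first assume $\F(F)\le\F(B_m)$ (otherwise take $G=B_m$ and stop), giving $\F(F)\le (1-t)P_t(B_m)\le \frac{(1-t)P_t(B)}{|B|^{(N-t)/N}}\,m^{(N-t)/N}$; then use~\eqref{hp1} together with~\eqref{R2} (not any isoperimetric estimate) to show $\F(F_1)\le \F(F)-\tfrac12\F(F_2)<\F(F)$, hence $\F(F_1)\le c\,m^{(N-t)/N}$. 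Now the algebra closes because with $m_1=m_2/\gamma$ and $\gamma\le 1$ one has
\begin{equation*}
\gamma\,m^{(N-t)/N}=\gamma\,(m_1+m_2)^{(N-t)/N}\le 2^{(N-t)/N}\,\gamma^{t/N}\,m_2^{(N-t)/N},
\end{equation*}
which is beaten by $\tfrac12\F(F_2)\ge \frac{c_N}{2t}\,m_2^{(N-t)/N}$ once $\gamma$ is small depending on $N,\delta_0$. So the ball comparison is not a side case for emptiness or negativity: it is the source of the upper bound on $\F(F_1)$ that your argument needs and does not otherwise have.
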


\begin{proof} 
Let~$m:=|F|$, $m_1:=|F_1|$ and $m_2:=|F_2|$. 
We may suppose that~$\F(F)$ is less than or equal than~$\F$
of the ball of volume~$m$, $B_m$, otherwise we can take~$G$ equal
to such a ball, decrease the energy and finish our proof. That is,
we may suppose that
 \begin{equation}\label{e1}
  \F(F)\le \F(B_m)\le (1-t)P_t(B_m)\leq \frac{(1-t)P_t(B)}{|B|^{\frac{N-t}{N}}} m^{\frac{N-t}{N}}. \end{equation}
Let $G=\l F_1$, with~$\l:=\sqrt[N]{1+\gamma}$ and~$\gamma=m_2/m_1$.
Notice that
this is in agreement with~\eqref{G}, and also~$|G|=m$.
Moreover, by~\eqref{hp0} we have that
$$ \gamma\le\frac{\eps\min(1,m_1)}{m_1}\le\eps,$$
so that $\gamma\in(0,1)$ can be taken as small as we like.

By applying inequality \eqref{Lz1} with $a=(1-t)P_t(F_1)$ and $b=sP_s(F_1)$, we obtain that
$$ 
(\l^{N-t}-1)\,(1-t)P_t(F_1) -(\l^{N-s}-1)\,sP_s(F_1)\le 
\gamma\,[(1-t)P_t(F_1)-sP_s(F_1)].
$$
As a consequence we get
\[
 \begin{aligned}
  \F(G)&= (1-t)P_t(G)-sP_s(G)\\
&=
\l^{N-t}(1-t)P_t(F_1)-\l^{N-s}sP_s(F_1)
\\ &=\F(F_1)+\big[(\l^{N-t}-1)(1-t)P_t(F_1)-(\l^{N-s}-1)sP_s(F_1)\big] \\
  &\le  (1+\gamma)\F(F_1)\,.
 \end{aligned}
\]
 Thus we have, by \eqref{R2} and~\eqref{hp1},
 \begin{eqnarray}\label{R3}
 \nonumber\F(G)-\F( F)&\le& (1+\gamma)\F(F_1)-(1-t)P_t(F)+sP_s(F)\\
  \nonumber  &\le& (1+\gamma)\F(F_1)-(1-t)P_t(F)+sP_s(F_1)+sP_s(F_2)\\
   \nonumber &\le& (1+\gamma)\F(F_1)+sP_s(F_1)+sP_s(F_2)\\
   \nonumber &&+\frac12\F(F_2)-(1-t)P_t(F_1)-(1-t)P_t(F_2)\\
   &=&\gamma\F(F_1)-\frac12\F(F_2). 
 \end{eqnarray}
Furthermore by~\eqref{E2}, since $m_2$ can be chosen in $(0,m_0)$, $m_0$ as in Lemma~\ref{positivenegative} (up to decreasing the value of $\eps$), we have
\begin{equation}\label{R4}
 \F(F_2)\ge\phi(m_2)\ge \frac{c_N}{t}\, m_2^\frac{N-t}{N}.
\end{equation}
Also, using again~\eqref{R2} and~\eqref{hp1}, we have that
\[
\begin{aligned}
 \F(F_1)&=[(1-t)P_t(F_1)+(1-t)P_t(F_2)-sP_s(F_1)-sP_s(F_2)]-\F(F_2)\\
 &\leq \F(F)+[(1-t)P_t(F_1)+(1-t)P_t(F_2)-(1-t)P_t(F)-\F(F_2)]\\
 &\le\F(F)-\frac12\F(F_2)<\F(F).
\end{aligned}
\]
This, \eqref{R3} and~\eqref{R4} give that
$$ \F(G)-\F( F)\le \gamma \F( F)-\frac12\F( F_2)\le
\gamma \F( F)-\frac{c_N}{2t}\, m_2^\frac{N-t}{N}.$$

Accordingly, recalling \eqref{e1} we conclude that
 \[
  \begin{aligned}
   \F(G)-\F( F)&\le \frac{(1-t)P_t(B)}{|B|^{\frac{N-t}{N}}}\, \gamma (m_1+m_2)^{\frac{N-t}N}-
\frac{c_N}{2t}\,m_2^{\frac{N-t}N}\\
&= m_2^{\frac{N-t}N}\left[ \frac{(1-t)P_t(B)}{|B|^{\frac{N-t}{N}}}\,\gamma \Big(
\gamma^{-1}+1\Big)^{\frac{N-t}N}-\frac{c_N}{2t}\right]
\\ &\le m_2^{\frac{N-t}N}\left[ 
\frac{(1-t)P_t(B)}{|B|^{\frac{N-t}{N}}}\,\gamma 
\left(2\gamma^{-1}\right)^{\frac{N-t}N}-\frac{c_N}{2t}\right]
\\ &= m_2^{\frac{N-t}N}\left[
\frac{2^{\frac{N-t}{N}}(1-t)P_t(B)}{|B|^{\frac{N-t}{N}}}\,\gamma^{\frac{t}N}-\frac{c_N}{2t}\right]
  \end{aligned}
 \]
which is negative
if~$\gamma$ is small enough, i.~e. 
$$
\gamma< \left[\frac{c_N}{2t}
\frac{|B|^{\frac{N-t}{N}}}{2^{\frac{N-t}{N}}(1-t)P_t(B)}\right]^{\frac Nt}.
$$
The proof is concluded.
\end{proof}

When~\eqref{hp1} does not hold, one obtains for free
some interesting density bounds.

Given a measurable set $E$ we denote by $\partial^m E$ the measure theoretic boundary of $E$
defined as 
\[
\partial^m E = \{x\in\R^N:\ |E\cap B_r(x)|>0\text{ and }|E\setminus B_r(x)|>0\text{ for all }r>0\}.
\]

\begin{lemma}\label{Aux}
%Let~$m_0$ be as in Lemma~\ref{positivenegative}
%and let~$m\in(0,m_0)$. 
Let~$F$ be a set of finite $t$-perimeter and volume~$m$,
and let~$x_0\in\R^N$. Assume
\begin{eqnarray}
\label{F1}
&&{\mbox{either $F_1:=F\setminus B(x_0,r)$ and~$F_2:=F\cap B(x_0,r)$,}}
\\ \label{F2}
&&{\mbox{or $F_2:=F\setminus B(x_0,r)$ and~$F_1:=F\cap B(x_0,r)$,}}
\end{eqnarray}
and suppose that $|F_2|<m_0$, with $m_0$ be as in Lemma~\ref{positivenegative},
and
\begin{equation}\label{no-hp1}
 (1-t)[P_t(F_1)+P_t(F_2)-P_t(F)]\ge \frac{\F(F_2)}{2}.
\end{equation}
Then
\begin{equation}\label{Au0}
\int_{F_1}\int_{F_2}\frac{dx\,dy}{|x-y|^{N+t}} \ge \frac{c_N}{t(1-t)} |F_2|^{\frac{N-t}N}.
\end{equation}
If~$x_0\in\partial^m F$ and \eqref{no-hp1} holds for any $r\le r_0$, 
we also have the estimate
\begin{equation}\label{Au1} |F\cap B(x_0,r)|\ge c_0\, r^N
\qquad \text{for all }r\in (0,r_0],
\end{equation}
where the constant $c_0>0$ depend only on $N$ and $\delta_0$.
%\begin{equation}\label{c5}
%c_5:= \left[\frac{1}{2c(N)(N+1-t)}\right]^{\frac N t}
%.
%\end{equation}
\end{lemma}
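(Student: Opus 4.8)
The plan is to exploit the decomposition formula \eqref{eqsum} from Lemma \ref{rmk1}, which gives, for either choice of $F_1,F_2$ in \eqref{F1}--\eqref{F2},
\[
P_t(F_1)+P_t(F_2)-P_t(F)=2\int_{F_1}\int_{F_2}\frac{dx\,dy}{|x-y|^{N+t}}.
\]
Inserting this identity into hypothesis \eqref{no-hp1} yields immediately
\[
2(1-t)\int_{F_1}\int_{F_2}\frac{dx\,dy}{|x-y|^{N+t}}\ge \frac{\F(F_2)}{2}.
\]
So the whole point is to bound $\F(F_2)$ from below. Since $|F_2|<m_0$ with $m_0$ as in Lemma \ref{positivenegative}, estimate \eqref{E2} of that lemma applies to $F_2$ and gives $\F(F_2)\ge\phi(|F_2|)\ge \frac{c_N}{t}|F_2|^{\frac{N-t}{N}}$. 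Combining the last two displays produces \eqref{Au0} with the claimed constant $\frac{c_N}{t(1-t)}$.

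For the density estimate \eqref{Au1}, I would use the first decomposition, $F_1=F\setminus B(x_0,r)$, $F_2=F\cap B(x_0,r)$, set $u(r):=|F\cap B(x_0,r)|=|F_2|$, and bound the left side of \eqref{Au0} geometrically. The standard trick is to split the inner integral over $F_2\subseteq B(x_0,r)$ and $F_1=F\setminus B(x_0,r)$ according to annuli, or more simply to observe that for $y\in F_2$ and $x\in F_1=F\setminus B(x_0,r)$ one has $|x-y|\le |x-x_0|+r\le 2|x-x_0|$ whenever $|x-x_0|\ge r$, so
\[
\int_{F_1}\int_{F_2}\frac{dx\,dy}{|x-y|^{N+t}}\le |F_2|\int_{\R^N\setminus B(x_0,r)}\frac{2^{N+t}\,dx}{|x-x_0|^{N+t}}=\frac{c_N 2^{N+t}}{t}\,\frac{u(r)}{r^t}.
\]
Feeding this into \eqref{Au0} gives $\frac{c_N}{t(1-t)}u(r)^{\frac{N-t}{N}}\le \frac{c_N 2^{N+t}}{t(1-t)}\,\frac{u(r)}{r^t}$, which, after dividing (legitimate since $x_0\in\partial^m F$ forces $u(r)>0$ for all $r>0$), rearranges to $u(r)^{-\frac{t}{N}}\le c_N r^{-t}$, i.e. $u(r)\ge c_N^{-N/t}\,r^N$. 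Absorbing the dependence on $t$ (which is controlled by $\delta_0$ through \eqref{deltazero}) into a constant $c_0=c_0(N,\delta_0)$ yields \eqref{Au1}. Since \eqref{no-hp1}, hence \eqref{Au0}, is assumed for all $r\le r_0$, the bound holds for all such $r$.

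The only delicate point is the elementary geometric estimate of the nonlocal repulsion integral: one must be a little careful that the bound $|x-y|\le 2|x-x_0|$ is only valid on $\{|x-x_0|\ge r\}$, which is precisely where $F_1$ lives in the first decomposition, so the argument is consistent; and one must verify that the resulting constant in front of $r^t$ depends only on $N$ (it does, being $\frac{c_N 2^{N+t}}{t}\le \frac{c_N 2^{N+1}}{t}$, with the $t^{-1}$ cancelling against the $t^{-1}$ from the isoperimetric term). No serious obstacle remains once Lemma \ref{rmk1} and \eqref{E2} are in hand; the whole lemma is really a bookkeeping consequence of the failure of \eqref{hp1} together with the positivity of $\phi$ on small volumes.
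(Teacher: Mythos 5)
Your argument for \eqref{Au0} is correct and identical to the paper's: combine \eqref{eqsum}, \eqref{no-hp1}, and \eqref{E2}. The problem is in the passage to the density estimate \eqref{Au1}.

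The key step you propose is the "elementary geometric estimate"
\[
\int_{F_1}\int_{F_2}\frac{dx\,dy}{|x-y|^{N+t}}\le |F_2|\int_{\R^N\setminus B(x_0,r)}\frac{2^{N+t}\,dx}{|x-x_0|^{N+t}},
\]
but the inequality is in the wrong direction. You derived $|x-y|\le 2|x-x_0|$, which gives a \emph{lower} bound $|x-y|^{-(N+t)}\ge 2^{-(N+t)}|x-x_0|^{-(N+t)}$, not an upper bound. In fact, no such clean upper bound can hold: if $y\in F_2\subset B(x_0,r)$ is near the sphere $\partial B(x_0,r)$ and $x\in F_1$ is just outside, $|x-y|$ can be arbitrarily small, so the double integral genuinely sees a near-boundary singularity that your estimate ignores. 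Attempting to fix the sign by the correct lower bound $|x-y|\ge |x-x_0|-r$ leads to $\int_{\R^N\setminus B(x_0,r)}(|x-x_0|-r)^{-(N+t)}\,dx=+\infty$, so the whole "pull $|F_2|$ out and integrate a radial kernel" shortcut fails. This is not a cosmetic gap; the conclusion $u(r)\gtrsim r^N$ does not follow from what you wrote.

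The paper instead integrates over $x\in F_2$ \emph{inside} the ball and bounds
$\int_{(B(x_0,r))^c}|x-y|^{-(N+t)}\,dy\le \tfrac{N\omega_N}{t}(r-|x-x_0|)^{-t}$
by enlarging the domain to the complement of $B(x,r-|x-x_0|)$; this is integrable in $x$ because $t<1$, and after the coarea formula it yields the integro-differential inequality
$\mu(r)^{(N-t)/N}\le c_N(1-t)\int_0^r\mu'(z)(r-z)^{-t}\,dz$, which is integrated in $r$ (Fubini) to produce $\int_0^\rho\mu(r)^{(N-t)/N}\,dr\le c_N\rho^{1-t}\mu(\rho)$, and one then needs the comparison/De Giorgi iteration argument from \cite{CG11} to deduce $\mu(r)\ge c_0 r^N$. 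That ODE-comparison step is the genuine content of \eqref{Au1}; it cannot be replaced by the elementary pointwise bound you attempted.
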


\begin{proof} Without loss of generality we can assume $x_0=0$. Also, using either~\eqref{F1} or~\eqref{F2},~\eqref{no-hp1} and~\eqref{eqsum}, we have that
$$ \int_{F_1}\int_{F_2}\frac{dx\,dy}{|x-y|^{N+t}} =\frac{1-t}{2(1-t)}
(P_t(F_1)+P_t(F_2)-P_t(F))\ge\frac{\F(F_2)}{4(1-t)} \ge\frac{\phi(|F_2|)}{4(1-t)}.$$
This and~\eqref{E2} (which can be used here thanks to the fact that we are assuming $|F_2|<m_0$) imply~\eqref{Au0}.

Now we prove~\eqref{Au1}.
For this, we take~$F_1$ and~$F_2$ as in~\eqref{F1}
and we define $\mu(r):=~|B(0,r)\cap F|=|F_2|$. Note that by the co-area formula
$$ 
\mu'(r)=\mathcal H^{N-1}(\partial B(0,r)\cap F), \quad \text{for a. e. } r.
$$
Then, by \eqref{Au0} and the fact that $F_1:=F\setminus B(0,r)\subset (B(0,r))^c$,
$$
\frac{c_N}{t(1-t)}\, \mu(r)^{\frac{N-t}{N}}\leq \int_{F_2}\int_{F_1}\frac{dx\,dy}{|x-y|^{N+t}} \leq \int_{F_2}
\int_{(B(0,r))^c} \frac{dx\,dy}{|x-y|^{N+t}}.
$$
For any $x\in F\cap B(0,r)$, we have
$$
\int_{(B(0,r))^c} \frac{dy}{|x-y|^{N+t}}\leq \int_{(B(x,r-|x|))^c}\frac{dy}{|x-y|^{N+t}}= \frac{N\omega_N}{t}
(r-|x|)^{-t}
$$
that leads to
$$
\int_{F_2} \int_{(B(0,r))^c} \frac{dx\,dy}{|x-y|^{N+t}}\leq \frac{c_N}{t}\int_0^r \mu'(z)(r-z)^{-t}\,dz.
$$
Finally we arrive at the following integro-differential inequality
$$
\mu(r)^{\frac{N-t}{N}}\leq c_N(1-t) \int_0^r \mu'(z)(r-z)^{-t}\,dz.
$$
We may integrate the last inequality in the $r$ variable on the interval $(0,\rho)$ and get
$$
\int_{0}^{\rho} \mu(r)^{\frac{N-t}{N}}\, dr\leq c_N(1-t) \int_{0}^{\rho}\int_0^r \mu'(z)(r-z)^{-t}\,dz\,dr,
$$
interchanging the order of integration,
$$
\int_{0}^{\rho}\int_0^r \mu'(z)(r-z)^{-t}\,dz\,dr=\int_0^{\rho} \mu'(z)\int_{z}^{\rho}(r-z)^{-t} \, dr\,dz,
$$
we get 
$$
\int_{0}^{\rho} \mu(r)^{\frac{N-t}{N}}\, dr\leq c_N\, \rho^{1-t} \mu (\rho).
$$
Now we arrive at the desired result, indeed, following \cite{CG11} (see the end of p. 9), it is possible to prove
that 
\begin{equation}\label{eq_int_diff} 
\mu(r)\geq g(r):= \left[\frac{1}{2c_N(N+1-t)}\right]^{\frac Nt }
\, r^N
\end{equation}
for any $r< r_0=(m_0/\omega_N)^{1/N}$, where $g$ satisfies 
$$
\int_{0}^{\rho} g(r)^{\frac{N-t}{N}}\, dr\geq 2 c_N\, \rho^{1-t} g (\rho),
$$
with the same constant $c_N$ as in \eqref{eq_int_diff}.  
\end{proof}

The combination of
Proposition~\ref{nonoptimality} and Lemma~\ref{Aux}
yield the following density estimate:

\begin{proposition}\label{densityestimate}
%Let~$m_0$ be as in Lemma~\ref{positivenegative}
%and let~$m\in(0,m_0)$.
There exist $r_0=r_0(m,N,\delta_0)>0$ such that, 
if $F$ is a minimizer for $\phi(m)$ and 
$x_0\in\partial^m F$, there holds
 \[
|B(x_0,r)\cap F|\ge c_0\, r^N
 \]
for any $r<r_0$, where $c_0$ is as in \eqref{Au1}.
\end{proposition}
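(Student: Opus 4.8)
The plan is to derive the density estimate directly by combining the non-optimality criterion (Proposition~\ref{nonoptimality}) with the density bound of Lemma~\ref{Aux}. The key observation is a dichotomy: for a minimizer $F$ of $\phi(m)$, a point $x_0\in\partial^m F$, and a small radius $r$, set $F_2:=F\cap B(x_0,r)$ and $F_1:=F\setminus B(x_0,r)$ (i.e.\ the splitting in~\eqref{F1}). By the isoperimetric-type estimates we can choose $r_0=r_0(m,N,\delta_0)$ small enough that $|F_2|=|F\cap B(x_0,r)|\le\omega_N r^N\le\eps\min(1,|F_1|)$ for all $r<r_0$, so that hypothesis~\eqref{hp0} of Proposition~\ref{nonoptimality} is satisfied (here one uses that $|F_1|=m-|F_2|$ is bounded below once $r_0$ is small, since $m$ is fixed). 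Similarly, shrinking $r_0$ further if needed, we ensure $|F_2|<m_0$ with $m_0$ as in Lemma~\ref{positivenegative}.

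Now examine condition~\eqref{hp1}, namely whether $(1-t)[P_t(F_1)+P_t(F_2)-P_t(F)]\le\frac12\F(F_2)$. If this held, Proposition~\ref{nonoptimality} would produce a competitor $G$ with $|G|=|F|$ and $\F(G)<\F(F)$, contradicting the minimality of $F$ for $\phi(m)$. (Note that $\F(F_2)$ could a priori be negative, but $|F_2|<m_0$ together with~\eqref{E2} of Lemma~\ref{positivenegative} forces $\F(F_2)\ge\phi(|F_2|)>0$, so the inequality~\eqref{hp1} is not vacuous and the contradiction is genuine; indeed $G$ is then either the ball $B_m$ or the dilation~\eqref{G} of $F_1$, and in either case it strictly beats $F$.) Therefore, for every $r<r_0$ the opposite inequality~\eqref{no-hp1} must hold:
\[
(1-t)[P_t(F_1)+P_t(F_2)-P_t(F)]\ge\frac{\F(F_2)}{2}.
\]

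Having established~\eqref{no-hp1} for all $r\le r_0$, with $x_0\in\partial^m F$ and $|F_2|<m_0$, we are exactly in the situation of the second part of Lemma~\ref{Aux}, whose conclusion~\eqref{Au1} gives $|F\cap B(x_0,r)|\ge c_0 r^N$ for all $r\in(0,r_0]$, with $c_0=c_0(N,\delta_0)$ as in~\eqref{Au1}. This is precisely the assertion of the proposition, so the proof is complete.

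The main point requiring care — the ``hard part'' — is the verification that $r_0$ can be chosen depending only on $m$, $N$, $\delta_0$ so that~\eqref{hp0} holds uniformly in $r<r_0$: one must check that a minimizer $F$ of $\phi(m)$ has $|F_1|=m-|F\cap B(x_0,r)|$ bounded below by a fixed positive quantity (which follows since $|F\cap B(x_0,r)|\le\omega_N r^N\to 0$), and that the smallness threshold $\eps=\eps(N,\delta_0)$ from Proposition~\ref{nonoptimality} and the threshold $m_0=m_0(N,\delta_0)$ from Lemma~\ref{positivenegative} can be absorbed into the choice of $r_0$. Everything else is a direct quotation of the two cited results.
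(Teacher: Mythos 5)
Your proof is correct and follows exactly the same route as the paper: split $F$ at radius $r$ using \eqref{F1}, shrink $r_0$ so that \eqref{hp0} holds and $|F_2|<m_0$, observe that minimality of $F$ rules out \eqref{hp1} via Proposition~\ref{nonoptimality}, and then invoke \eqref{Au1} of Lemma~\ref{Aux}. Your parenthetical about $\F(F_2)>0$ is a true observation but not logically needed for the contradiction (Proposition~\ref{nonoptimality}'s conclusion alone already violates minimality, regardless of the sign of $\F(F_2)$); otherwise the argument matches the paper's, with slightly more care spelled out about why $|F_1|$ stays bounded below.
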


\begin{proof}
Let $F_1$ and~$F_2$ be as in~\eqref{F1}.
Up to choosing $r_0$ 
small enough, that is,
$$
\omega_N r_0^N \le \eps(N,\delta_0)\min(1,m)\,,
$$
we can suppose that $F_1$ and $F_2$ satisfy the hypotheses 
of Proposition \ref{nonoptimality}. Thus, since $F$ is a minimum, we 
obtain that~\eqref{hp1} cannot hold true. Hence~\eqref{no-hp1}
is satisfied, and so we can apply~\eqref{Au1} in Lemma~\ref{Aux}
and obtain the desired result.
\end{proof}

\begin{proposition}\label{necessario}
Let $F$ be a minimum for $\phi(m)$. Then $F$ is essentially bounded. 
Moreover, if $t=1$, for any $s<t$, $s\in (0,1)$, $F$ 
is also essentially connected in the sense of \cite{AmbCasMasMor},
that is, it cannot be decomposed into two 
disjoint sets $F_1$ and $F_2$ of positive measure such that
$P(F)=P(F_1)+P(F_2)$.
\end{proposition}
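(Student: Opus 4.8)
The plan is to show boundedness first, then essential connectedness, in both cases by exploiting the non-optimality criterion in Proposition~\ref{nonoptimality}. For the boundedness, the natural strategy is a ``cutting at radius $r$'' argument combined with the density estimate of Proposition~\ref{densityestimate}. Suppose, for contradiction, that $F$ is not essentially bounded. For $r>0$ write $F_1 = F\setminus B(0,r)$ and $F_2 = F\cap B(0,r)$ (or the reverse, choosing whichever decomposition keeps the ``small'' piece small). Since $|F|=m<+\infty$, we may choose $r$ large so that $|F\setminus B(0,r)|$ is as small as we like; then the hypothesis \eqref{hp0} of Proposition~\ref{nonoptimality} is satisfied with $F_2 = F\setminus B(0,r)$. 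If \eqref{hp1} held, Proposition~\ref{nonoptimality} would produce a competitor of strictly smaller energy, contradicting minimality; hence \eqref{no-hp1} holds, and Lemma~\ref{Aux}, inequality \eqref{Au0}, gives
\[
\int_{F\cap B(0,r)}\int_{F\setminus B(0,r)}\frac{dx\,dy}{|x-y|^{N+t}}\ \ge\ \frac{c_N}{t(1-t)}\,|F\setminus B(0,r)|^{\frac{N-t}{N}}.
\]
On the other hand, the left-hand side is bounded by $P_t(F)\,$-type interaction terms and in fact tends to $0$ as $r\to+\infty$ (since $|F\setminus B(0,r)|\to 0$ and the kernel is integrable away from the diagonal, using Corollary~\ref{CC2} to bound $P_t(F)$). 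Comparing the two, one gets that $|F\setminus B(0,r)|$ must vanish identically for $r$ large, i.e.\ $F$ is essentially bounded. The clean way to run this is to set $v(r):=|F\setminus B(0,r)|$, derive from \eqref{Au0} and a direct estimate of the interaction integral (as in the proof of \eqref{Au1}) an inequality of the form $v(r)^{\frac{N-t}{N}}\le C\,(-v'(r))\,\rho(r)$ or a comparable integro-differential inequality forcing $v$ to reach $0$ in finite ``time'', again following the ODE comparison technique used at the end of Lemma~\ref{Aux} (cf.\ \cite{CG11}).

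For the second statement, assume $t=1$, so $\F = N\omega_N P - sP_s$, and suppose $F=F_1\cup F_2$ with $|F_1\cap F_2|=0$, both of positive measure, and $P(F)=P(F_1)+P(F_2)$. The point is that for the classical perimeter the decomposition is ``additive without interaction,'' so the only competing interaction is through the $s$-perimeter, which is strictly superadditive under a genuine splitting. Concretely, by Lemma~\ref{rmk1} applied to $P_s$,
\[
\F(F)=N\omega_N\big(P(F_1)+P(F_2)\big)-sP_s(F)
= \F(F_1)+\F(F_2) + 2s\int_{F_1}\int_{F_2}\frac{dx\,dy}{|x-y|^{N+s}},
\]
and the last term is strictly positive because $|F_1|,|F_2|>0$. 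Hence $\F(F)>\F(F_1)+\F(F_2)$. Now dilate: replace $F_1$ by $\lambda_1 F_1$ and $F_2$ by $\lambda_2 F_2$ with $\lambda_i^N$ chosen so that $|\lambda_1 F_1| + |\lambda_2 F_2| = m$ while keeping them ``far apart'' (translate one piece to infinity). Using the scaling $P(\lambda E)=\lambda^{N-1}P(E)$, $P_s(\lambda E)=\lambda^{N-s}P_s(E)$ and the same elementary inequality \eqref{Lz1}-type bookkeeping, one checks that the energy of the reassembled, well-separated configuration does not exceed $\F(F_1)+\F(F_2)$ up to a term that can be made negligible (the interaction between the two far-apart dilated pieces decays to $0$), so the total energy is strictly below $\F(F)$, contradicting minimality. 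Alternatively, and more in the spirit of the rest of the section, one invokes Proposition~\ref{nonoptimality} directly with this $F_1,F_2$ (after arranging $|F_2|\le\eps\min(1,|F_1|)$, which one may do by choosing $F_2$ to be the smaller component, rescaled if necessary): hypothesis \eqref{hp1} reads $(1-t)[P_t(F_1)+P_t(F_2)-P_t(F)]\le \tfrac12\F(F_2)$, which at $t=1$ becomes $N\omega_N[P(F_1)+P(F_2)-P(F)]\le\tfrac12\F(F_2)$, and the left-hand side is $0$ by assumption, while $\F(F_2)>0$ by \eqref{E2} once $|F_2|<m_0$; so \eqref{hp1} holds and Proposition~\ref{nonoptimality} yields a strictly better competitor, contradiction.

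The main obstacle I expect is the boundedness part: making the integro-differential inequality for $v(r)=|F\setminus B(0,r)|$ rigorous, in particular bounding the interaction integral $\int_{F\cap B(0,r)}\int_{F\setminus B(0,r)}|x-y|^{-N-t}\,dx\,dy$ from above by something like $c_N\,t^{-1}\int v'$-type quantity uniformly in $r$, and then quoting the ODE comparison lemma from \cite{CG11} in the ``decreasing'' variant (the estimate \eqref{Au1} is for the increasing quantity $\mu(r)=|F\cap B(0,r)|$, so one needs the mirror-image statement ensuring $v$ hits $0$). Care is also needed in checking that the density lower bound from Proposition~\ref{densityestimate} is compatible with $v(r)\equiv 0$ for large $r$, i.e.\ that $\partial^m F$ is contained in a bounded ball — but this follows because a point $x_0\in\partial^m F$ with $|x_0|$ large would force $|F\cap B(x_0,r_0)|\ge c_0 r_0^N>0$, contradicting $v$ vanishing past $|x_0|-r_0$.
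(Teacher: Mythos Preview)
Your overall strategy is sound, but both parts are more complicated than necessary, and one of your alternatives for connectedness has a real gap.

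\textbf{Boundedness.} The paper's argument is much shorter: if $F$ were unbounded, pick a sequence $x_k\in\partial^m F$ with $|x_k|\to\infty$ and the balls $B(x_k,r)$ pairwise disjoint; Proposition~\ref{densityestimate} gives $|F\cap B(x_k,r)|\ge c_0 r^N$ for each $k$, so $m=|F|\ge\sum_k c_0 r^N=+\infty$, a contradiction. Your integro-differential route for $v(r)=|F\setminus B(0,r)|$ would also work (it is precisely the mechanism behind the localization step in Theorem~\ref{PRE-thexist}, using Lemma~\ref{ID}), but it is heavier machinery than needed here, and as you note it requires the ``decreasing'' variant of the ODE comparison, which in the paper appears only later (Lemma~\ref{ID}).

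\textbf{Connectedness.} Your first alternative is essentially the paper's proof, but the dilation is superfluous: since $|F_1|+|F_2|=m$ already, the competitor $G_k:=F_1\cup(F_2+ke_1)$ has volume $m$ for $k$ large without any rescaling. Using $P(G_k)\le P(F_1)+P(F_2)=P(F)$ and Lemma~\ref{rmk1} for $P_s$, the minimality of $F$ forces
\[
2s\int_{F_1}\int_{F_2}\frac{dx\,dy}{|x-y|^{N+s}}\ \le\ 2s\int_{F_1}\int_{F_2+ke_1}\frac{dx\,dy}{|x-y|^{N+s}}\ \longrightarrow\ 0,
\]
hence one of $|F_1|,|F_2|$ vanishes. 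Your second alternative, invoking Proposition~\ref{nonoptimality}, does not go through as stated: the hypothesis \eqref{hp0} requires $|F_2|\le\eps\min(1,|F_1|)$, and a given essential decomposition may well have $|F_1|\approx|F_2|\approx m/2$. ``Rescaling if necessary'' is not a legitimate move here, since rescaling one piece alone destroys the decomposition of the fixed set $F$. Similarly, the appeal to \eqref{E2} for $\F(F_2)>0$ needs $|F_2|<m_0$, which is not guaranteed.
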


\begin{proof}
%Notice that if $N=2$ the thesis follows from Proposition \ref{convesso}. 

\noindent Let~$F$ be a minimum. First we prove that it is bounded.
By contradiction, if not, there exists a sequence $x_k\in\partial^m F$ 
such that $|x_k|\to\infty$ as $k\to\infty$. 
In particular, up to a subsequence, we may suppose that all the balls~$B(x_k,1)$
are disjoint, hence so are the balls~$B(x_k,r)$ when~$r\in(0,1)$.
Hence
$$ m=|F|\ge \sum_k|B(x_k,r)\cap F|.$$
On the other hand, by Proposition \ref{densityestimate}, we 
know that~$|B(x_k,r)\cap F|\ge c_0 r^N$ if~$r$ is small enough, hence
we obtain that
\[
m\ge
\sum_k c_0 r^N=+\infty,
 \]
which is clearly not possible.

This proves that~$F$ is bounded.
Now we show that, if $t=1$, $F$ is also
essentially connected.
Suppose, by contradiction, that~$F$ can be decomposed into two
disjoint sets $F_1$ and $F_2$ of positive measure such that
\begin{equation}\label{No}
P(F)=P(F_1)+P(F_2).\end{equation} Since $F$ is bounded, so are~$F_1$ and~$F_2$,
say~$F_1$, $F_2\subseteq B(0,R)$, for some~$R>0$.
Hence, we consider the translation~$F_{2,k}:=F_2+(k,0,\dots,0)$
and we observe that if~$x\in F_1$ and~$y\in F_{2,k}$
we have that
$$ |x-y|\ge |y|-|x|\ge k-2R\ge \frac{k}{2}$$
if~$k$ is large enough. Accordingly, we have that
$$ \int_{F_1}\int_{F_{2,k}}\frac{dx\,dy}{|x-y|^{N+s}}\le
\int_{B(0,R)}\int_{B(0,R)+(k,0,\dots,0)}\frac{dx\,dy}{(k/2)^{N+s}}
= \frac{c_N R^{2N}}{k^{N+s}}$$
and so
\begin{equation}\label{k}
\lim_{k\rightarrow+\infty} \int_{F_1}\int_{F_{2,k}}\frac{dx\,dy}{|x-y|^{N+s}}=0.
\end{equation}
Notice also that, if~$G_k:=F_1\cup F_{2,k}$ we have that~$|G_k|=
|F_1|+|F_{2,k}|=|F_1|+|F_2|=|F|$, for~$k$ large, and so,
by the minimality of~$F$, \eqref{eqsum}, \eqref{R2} and \eqref{No} we have that
\begin{eqnarray*}&&
N\omega_N P(F_1)+N\omega_NP(F_2)
-sP_s(F_1)-sP_s(F_2)+
2s\int_{F_1}\int_{F_2}\frac{dx\,dy}{|x-y|^{N+s}} \\&=&
N\omega_N P(F)-sP_s(F)\\ &=&
\mathcal{F}_{s,1}(F) \\&\le& \mathcal{F}_{s,1}(G_k)
\\ &=& N\omega_N P(G_k)-sP_s(G_k)
\\ &\le& N\omega_NP(F_1)+N\omega_NP(F_{2,k}) 
-sP_s(F_1)-sP_s(F_{2,k})+
2s\int_{F_1}\int_{F_{2,k}}\frac{dx\,dy}{|x-y|^{N+s}}
\\ &=& N\omega_NP(F_1)+N\omega_NP(F_{2})  
-sP_s(F_1)-sP_s(F_{2})+
2s\int_{F_1}\int_{F_{2,k}}\frac{dx\,dy}{|x-y|^{N+s}}.
\end{eqnarray*}
Therefore, taking the limit as~$k\rightarrow+\infty$ and using~\eqref{k},
we obtain that
$$ 2s\int_{F_1}\int_{F_2}\frac{dx\,dy}{|x-y|^{N+s}}\le0.$$
This says that either~$F_1$ or~$F_2$ must have zero measure,
against our assumptions.
\end{proof}

We conclude the section with the following estimate on 
the fractional isoperimetric deficit, which 
will be important to localize minimizing sequences.

\begin{lemma}\label{deficit}
There exists $m_2=m_2(N,\delta_0)$ such that for any $m\in(0,m_2)$ the following
statement holds true.

Let~$F\subset \mathbb{R}^N$ be a set of finite
perimeter. Assume that~$\F(F)\le\F(B_m)$.
Then there exists $c_0>0$ such that 
\begin{equation}\label{deficit1}
D_t(F):=\frac{P_t(F)-P_t(B_m)}{P_t(B_m)}\leq c_0\,m^{\frac{t-s}N}\,.
\end{equation}
%with 
%\begin{equation}\label{c3}
%c_3:=c_0^{1-\frac s t}\left[\frac{(1-t)P_t(B(0,1))}{|B(0,1)|^{\frac{N-t}{N}}}+c%_0\right]^{\frac st}
% \frac{|B(0,1)|^{\frac{N-t}{N}}}{(1-t)P_t(B(0,1))}.
%\end{equation}
%$c_0$ as in \eqref{c0}.
In addition, there exists a translation of~$F$ (still denoted
by~$F$ for simplicity) such that
\begin{equation}
\label{deficit2}
|F\,\Delta \,B_m|\leq c_0 \, m^{1+\frac{t-s}{2N}}\,.
\end{equation}
\end{lemma}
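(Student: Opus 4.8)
The plan is to deduce both inequalities from the quantitative fractional isoperimetric inequality together with the upper bound on $\phi(m)$ from Lemma~\ref{lema}. First I would recall the quantitative version of~\eqref{iso_ineq2}: there is a constant $c(N)$ (depending also on $t$, but uniformly for $t-s\ge\delta_0$, say using the form with the normalization $(1-t)P_t$) such that for every set $F$ of measure $m$,
\[
\left(\frac{\inf_{x}|F\Delta B_m(x)|}{m}\right)^2\le c(N)\,D_t(F),
\]
where $D_t(F)$ is the deficit defined in~\eqref{deficit1}; this is the result of~\cite{FMM11}. So if I can bound $D_t(F)$ from above by $c_0 m^{(t-s)/N}$, then~\eqref{deficit1} is immediate and~\eqref{deficit2} follows by taking the square root: $|F\Delta B_m|\le m\sqrt{c(N)c_0}\,m^{(t-s)/(2N)}=c_0 m^{1+(t-s)/(2N)}$, after a translation realizing (essentially) the infimum.

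The core step is therefore the estimate on $D_t(F)$. Starting from the hypothesis $\F(F)\le\F(B_m)$, I expand both sides. By Lemma~\ref{lema} (its proof), $\F(B_m)=c_1 m^{(N-t)/N}-c_2 m^{(N-s)/N}$ with $c_1,c_2$ as in~\eqref{c1c2}, and $\F(F)=(1-t)P_t(F)-sP_s(F)$. Writing $(1-t)P_t(F)=(1-t)P_t(B_m)(1+D_t(F))$ and using that $(1-t)P_t(B_m)=c_1 m^{(N-t)/N}$, the inequality $\F(F)\le\F(B_m)$ rearranges to
\[
c_1 m^{\frac{N-t}{N}}D_t(F)\le sP_s(F)-sP_s(B_m)=sP_s(F)-c_2 m^{\frac{N-s}{N}}.
\]
Now I bound $sP_s(F)$ from above via the interpolation estimate~\eqref{basicestimates}, which gives $sP_s(F)\le c_N(1-s/t)^{-1}m^{1-s/t}\big((1-t)P_t(F)\big)^{s/t}$. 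Since $\F(F)\le\F(B_m)\le c_1 m^{(N-t)/N}$ we have, by Corollary~\ref{CC2} (or directly), $(1-t)P_t(F)\le 2(\F(F)+c_0 m)\le c_0 m^{(N-t)/N}$ for small $m$; substituting this back yields $sP_s(F)\le c_0 m^{1-s/t}\cdot m^{(N-t)s/(Nt)}=c_0 m^{(N-s)/N}$. Thus the right-hand side above is $\le c_0 m^{(N-s)/N}$, and dividing by $c_1 m^{(N-t)/N}$ gives $D_t(F)\le c_0 m^{(N-s)/N-(N-t)/N}=c_0 m^{(t-s)/N}$, which is~\eqref{deficit1}. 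One must also check $D_t(F)\ge 0$ is not needed, but $D_t(F)$ could a priori be small; the quantitative inequality handles any sign, and in any case $D_t(F)\ge 0$ by the isoperimetric inequality itself. The threshold $m_2$ is chosen small enough that all the "for small $m$" bounds above (in particular $(1-t)P_t(F)\le c_0 m^{(N-t)/N}$) are valid.

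The main obstacle is keeping all constants uniform in $s,t$ under the standing assumption $t-s\ge\delta_0$, and in particular invoking the \emph{quantitative} fractional isoperimetric inequality with a constant that is stable as $s\to0$ and $t\to1$; this is exactly the content of~\cite{FMM11,labandadei5} and I would cite it, noting as in the remark after~\eqref{basicestimates} that the argument extends to $t=1$ by replacing $(1-t)P_t$ with $N\omega_N P$ and using the classical quantitative isoperimetric inequality of Fusco--Maggi--Pratelli. A secondary technical point is the passage from "infimum over translations of $|F\Delta B_m|$" to "a fixed translation of $F$", which is harmless since the infimum is attained (or attained up to $\varepsilon$, absorbable into $c_0$).
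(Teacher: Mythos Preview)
Your proposal is correct and follows essentially the same route as the paper: bound $sP_s(F)$ via Corollary~\ref{CC2} (equivalently, the interpolation estimate~\eqref{basicestimates} combined with the a~priori bound on $(1-t)P_t(F)$), plug this into the rearranged inequality $\F(F)\le\F(B_m)$ to obtain~\eqref{deficit1}, and then invoke the quantitative fractional isoperimetric inequality of~\cite{labandadei5} to derive~\eqref{deficit2}. The only cosmetic difference is that the paper drops the $sP_s(B_m)$ term at the outset (using $\F(B_m)\le(1-t)P_t(B_m)$) rather than carrying it through the rearrangement as you do.
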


\begin{proof}
First recall that
\begin{equation}\label{PtB}
P_t(B_m)=\frac{P_t(B)}{|B|^{\frac{N-t}{N}}} \, m^{\frac{N-t}{N}}.
\end{equation}
Also, by our assumptions,
\begin{equation}\label{2E}
(1-t)P_t(F)-sP_s(F)=\F(F)\le \F(B_m)\leq (1-t)P_t(B_m).
\end{equation}
Using \eqref{Y2} we have that
 \begin{eqnarray*}
 s P_s(F) &\le& c_0^{1-\frac st}m^{1-\frac s t} [(1-t)P_t(B_m)+c_0 \,m]^{\frac st}\\
&= &c_0^{1-\frac st}m^{1-\frac s t}
\left[\frac{(1-t)P_t(B)}{|B|^{\frac{N-t}{N}}}\,m^{\frac{N-t}{N}}+c_0\, m\right]^{\frac st}\\
& \le& c_0^{1-\frac st} \left[\frac{(1-t)P_t(B)}{|B|^{\frac{N-t}{N}}}+c_0\right]^{\frac
st}m^{\frac{N-s}{N}},\end{eqnarray*}
for small~$m$.
{F}rom this and~\eqref{2E},
we have that
$$ \frac{P_t(F)-P_t(B_m)}{P_t (B_m)} \le c_0^{1-\frac s
t}\left[\frac{(1-t)P_t(B)}{|B|^{\frac{N-t}{N}}}+c_0\right]^{\frac st}
\frac{|B|^{\frac{N-t}{N}}}{(1-t)P_t(B)} \, m^{\frac{N-s}{N}-\frac{N-t}{N}}\le c_0 m^\frac{t-s}{N}\,.
$$
This proves~\eqref{deficit1}. 

To prove~\eqref{deficit2} it is sufficient to use \eqref{deficit1} 
and the estimate 
$$
c_0\,D_t(F)\geq \frac{|F\,\Delta \,B_m|^2}{|B_m|^2},
$$ 
which was proved in \cite[Theorem 1.1]{labandadei5} for any $t\geq \delta_0>0$.
Together with \eqref{deficit1} and possibly increasing the constant $c_0$,
this implies \eqref{deficit2}.
\end{proof}

\section{Existence of minimizers}\label{section3}

In order to prove the first statement in Theorem \ref{thexist},
and for further use as well, we prove a general result
on integro-differential equations:

\begin{lemma}\label{ID}
Let~$m,t\in(0,1)$.
Let~$c,\bar\rho\ge0$ be such that 
\begin{equation}\label{condc}
c\ge (1-t)m^\frac tN\,,
\end{equation}
and~let $\mu:[0,+\infty)\rightarrow[0,m]$
be a non-increasing
function such that 
\begin{equation}\label{U}
-\int_{\rho}^\infty \mu'(z)(z-\rho)^{-t}\, dz\geq \frac{3c}{1-t} \mu(\rho)^{\frac{N-t}{N}}\qquad \text{for all }\rho\ge\bar\rho\,.
\end{equation}
%for any~$\rho\ge\bar\rho$.
Then, there holds
%exists~$\delta>0$, depending on~$\bar\rho$, $c$ and~$N$, such that
\begin{equation}\label{eqlu}
\mu\left(\bar\rho+\frac{(2m)^{\frac t N}N}{ct}\right)=0\,.
\end{equation}
\end{lemma}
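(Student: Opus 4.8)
The plan is to reduce the integro--differential inequality \eqref{U} to a one--parameter family of \emph{ordinary} integral inequalities --- exactly the ``interchange of the order of integration'' trick already used in the proof of Lemma~\ref{Aux} --- and then to run a self--improving iteration on these, which will force $\mu$ to vanish after a finite ``time''. I expect no comparison/barrier principle to be available, because the operator in \eqref{U} is one--sided and ``forward--looking'', so everything must be done at the level of the integrated inequality.

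\emph{Step 1 (from \eqref{U} to an integral inequality).} Fix $\bar\rho\le\rho_0<R$, integrate \eqref{U} in $\rho$ over $(\rho_0,R)$ and exchange the order of integration (all integrands are non--negative, so Tonelli applies). Since $\int_{\rho_0}^{\min(z,R)}(z-\rho)^{-t}\,d\rho=\frac{1}{1-t}\big[(z-\rho_0)^{1-t}-(z-\min(z,R))^{1-t}\big]$, which vanishes at $\rho_0$ and at $+\infty$ and is increasing on $(\rho_0,R)$ and decreasing on $(R,+\infty)$, a by--parts integration (using that $\mu$ is non--increasing and bounded) gives, for every $\bar\rho\le\rho_0<R$,
\begin{equation}\label{plan-ast}
\frac{3c}{1-t}\int_{\rho_0}^{R}\mu(\rho)^{\frac{N-t}{N}}\,d\rho\ \le\ \int_{\rho_0}^{R}\mu(z)\,(z-\rho_0)^{-t}\,dz .
\end{equation}

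\emph{Step 2 (one--step decay).} In \eqref{plan-ast} bound $\mu(z)\le\mu(\rho_0)$ on the right and put $\ell=\ell(\rho_0):=\big(\tfrac{2(1-t)\,\mu(\rho_0)^{t/N}}{3c}\big)^{1/t}$, so that $\mu(\rho_0)^{t/N}\ell^{-t}=\tfrac{3c}{2(1-t)}$; by \eqref{condc} one has $\mu(\rho_0)^{t/N}\le m^{t/N}\le c/(1-t)$, hence $\ell\le(2/3)^{1/t}$. Splitting $(\rho_0,R)$ at $\rho_0+\ell$, estimating $(z-\rho_0)^{-t}\le\ell^{-t}$ on $(\rho_0+\ell,R)$ together with $\mu(z)=\mu(z)^{\frac{N-t}{N}}\mu(z)^{t/N}\le\mu(z)^{\frac{N-t}{N}}\mu(\rho_0)^{t/N}$, and $\mu(z)^{\frac{N-t}{N}}\le\mu(\rho_0)^{\frac{N-t}{N}}$ on $(\rho_0,\rho_0+\ell)$, one absorbs the tail into the left--hand side to get $\int_{\rho_0}^{R}\mu^{\frac{N-t}{N}}\,d\rho\le\tfrac{2\,\mu(\rho_0)\,\ell^{1-t}}{3c}$ for every $R>\rho_0$. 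Coupling this with $\int_{\rho_0}^{R}\mu^{\frac{N-t}{N}}\,d\rho\ge(R-\rho_0)\,\mu(R)^{\frac{N-t}{N}}$ and taking $R-\rho_0=\tfrac{2\ell}{1-t}$ gives, after simplification via the definition of $\ell$, the one--step estimate $\mu\big(\rho_0+\tfrac{2\ell(\rho_0)}{1-t}\big)\le 2^{-\frac{N}{N-t}}\mu(\rho_0)$, with a step length $\tfrac{2\ell(\rho_0)}{1-t}=\tfrac{2}{1-t}\big(\tfrac{2(1-t)}{3c}\big)^{1/t}\mu(\rho_0)^{1/N}$ proportional to $\mu(\rho_0)^{1/N}$.

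\emph{Step 3 (iteration and the constant).} Starting from $\rho_0=\bar\rho$ and iterating Step~2 produces points $\rho_k\uparrow\rho_\infty$ with $\mu(\rho_k)\le\theta^{k}m$ ($\theta:=2^{-N/(N-t)}<1$); the step lengths are then $\le(\mathrm{const})\,m^{1/N}\theta^{k/N}$, hence summable, so $\rho_\infty-\bar\rho$ is finite and $O(m^{1/N})$. Since $\mu$ is non--increasing, $\mu(\rho_\infty)\le\inf_k\mu(\rho_k)=0$, so $\mu\equiv0$ on $[\rho_\infty,+\infty)$. There remains to check $\rho_\infty-\bar\rho\le\tfrac{(2m)^{t/N}N}{ct}$: using \eqref{condc} once more in the form $c^{\,1-1/t}m^{(1-t)/N}\le(1-t)^{\,1-1/t}$, the $m$--dependence cancels and one is reduced to an explicit inequality in $N,t$ alone. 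The main difficulty, and the place where the normalization \eqref{condc} and the monotonicity of $\mu$ must be used sharply, is getting the bookkeeping tight enough to land on the precise threshold $\tfrac{(2m)^{t/N}N}{ct}$: the crude splitting of Step~2 loses a bounded factor, which should be recovered either by optimizing the step length at each stage or by comparing, inside \eqref{plan-ast}, with the self--similar profile $\big(\mu(\rho_0)^{t/N}-\tfrac{ct}{N}(\rho-\rho_0)\big)_+^{N/t}$, whose extinction time is exactly $\tfrac{(2m)^{t/N}N}{ct}$ when started at height $2m$.
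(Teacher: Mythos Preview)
Your Steps 1 and 2 are correct, and the iteration in Step 3 does give a finite extinction time of order $m^{t/N}/c$. The gap is exactly where you flag it: the constant you obtain is \emph{not} $\frac{(2m)^{t/N}N}{ct}$. After using \eqref{condc} your iteration yields
\[
\rho_\infty-\bar\rho\ \le\ \frac{2\cdot 2^{1/t}}{3^{1/t}\,c\,(1-2^{-1/(N-t)})}\,m^{t/N},
\]
and the required inequality $\frac{2t\cdot 2^{1/t}}{3^{1/t}N(1-2^{-1/(N-t)})2^{t/N}}\le 1$ fails for large $N$ and $t$ close to $1$ (e.g.\ $N=3$, $t\to 1^-$ already gives a left side $>1$). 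So the argument as written does not prove the lemma as stated, and ``optimizing the step length'' will not repair this: each step of your scheme loses a fixed multiplicative factor independently of the step length chosen.

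Your opening claim that ``no comparison/barrier principle is available'' is what leads you astray. In fact the paper proceeds precisely by a comparison argument. From your \eqref{plan-ast} one can send $R\to\infty$ and then split the right-hand side at $\rho_0+1$ (using $(z-\rho_0)^{-t}\le 1$ on the tail together with $\mu\le m^{t/N}\mu^{(N-t)/N}$ and \eqref{condc}) to obtain the clean inequality
\[
\mu(\rho)\ \ge\ 2c\int_\rho^\infty \mu(z)^{\frac{N-t}{N}}\,dz\qquad\text{for all }\rho\ge\bar\rho.
\]
This \emph{does} admit a one-sided comparison: the explicit profile
\[
g(\rho)=\Big[(2\mu(\bar\rho))^{t/N}-\tfrac{ct}{N}(\rho-\bar\rho)\Big]_+^{N/t}
\]
satisfies $2c\int_\rho^\infty g^{(N-t)/N}=2g(\rho)$ (so it solves the same relation with half the constant) and starts at $g(\bar\rho)=2\mu(\bar\rho)>\mu(\bar\rho)$. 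A touching argument (look at the infimum of the set $\{\rho:\mu\ge g\text{ on }[\rho,\infty)\}$) then forces $g\ge\mu$ on the whole interval up to the extinction time of $g$, which is exactly $\bar\rho+\frac{(2\mu(\bar\rho))^{t/N}N}{ct}\le\bar\rho+\frac{(2m)^{t/N}N}{ct}$. This is your own ``self-similar profile'' suggestion, and it is the paper's method --- but it has to be \emph{executed}, not just named, and it requires first passing to the inequality above rather than working directly with \eqref{plan-ast}.
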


\begin{proof}
Integrating \eqref{U} between $R\ge\bar\rho$ and $+\infty$, we obtain
\begin{equation}\label{UU}
-\int_R^\infty\left(\int_\rho^\infty \mu'(z)(z-\rho)^{-t}\, dz\right)\,d\rho
\geq \frac{3c}{1-t} \int_R^\infty
\mu(\rho)^{\frac{N-t}{N}}\, d\rho.
\end{equation}
Also, if~$z\in[R,R+1]$ we have that~$z-R\le1$ and so, since~$\mu'\le0$
a. e., we get that
$$ -\int_R^{R+1} \mu'(z)(z-R)^{1-t}\, dz\le -\int_R^{R+1} \mu'(z)\, dz
=\mu(R)-\mu(R+1).$$
Therefore,
interchanging the order of integration in~\eqref{UU}, 
integrating by parts
and using that $\mu\in[0,m]$ and \eqref{condc}, we see that
\begin{eqnarray*}
-\int_R^\infty\left(\int_\rho^\infty \mu'(z)(z-\rho)^{-t}\, dz\right)\,d\rho &=&
-\int_R^\infty\left(\int_{R}^{z} \mu'(z)(z-\rho)^{-t}\,d\rho\right)\,dz
\\
&= &-\frac{1}{1-t}\int_R^\infty \mu'(z)(z-R)^{1-t} \, dz 
\\
&\leq& \frac{\mu(R)-\mu(R+1)}{1-t}
-\frac{1}{1-t}\int_{R+1}^\infty \mu'(z)(z-R)^{1-t} \, dz
\\
&=& \frac{\mu(R)}{1-t}+\int_{R+1}^\infty \mu(z)(z-R)^{-t} \, dz
\\
&\le& \frac{\mu(R)}{1-t}+\int_{R+1}^\infty \mu(z) \, dz
\\
&\le& \frac{\mu(R)}{1-t}+m^\frac t N\int_{R}^\infty \mu(z)^\frac{N-t}{N} \, dz
\\
&\le& \frac{1}{1-t}
\left( \mu(R)+c\int_{R}^\infty \mu(z)^\frac{N-t}{N} \, dz\right).
%\frac{R^{1-t}}{1-t}\mu(R). 
\end{eqnarray*}
Recalling \eqref{UU},
this gives the integro-differential inequality
\begin{equation}\label{usecondo}
\mu(\rho) \ge 2c
\int_{\rho}^\infty \mu(z)^\frac{N-t}{N}dz 
\qquad \text{for all }\rho\ge \bar\rho\,.
\end{equation}
Let now 
$$
g(\rho):=\left\{\begin{array}{ll}
\left[(2\mu(\bar\rho))^{\frac t N}-\dfrac{ct}{N}(\rho-\bar\rho)\right]^\frac{N}{t} 
& \text{if }\rho\in \left[\bar \rho,\bar\rho+\dfrac{(2\mu(\bar\rho))^{\frac tN}N}{ct}\right]
\\ 
\\
0 & \text{if }\rho>\bar\rho+\dfrac{(2\mu(\bar\rho))^{\frac tN}N}{ct}\,.
\end{array}\right.
$$
Notice that $g$ is continuous and it satisfies
\begin{equation}\label{4.5bis}
2c\int_\rho^\infty g(z)^\frac{N-t}{N} \,dz =
2g(\rho) \qquad \text{for all }
\rho\in \left[\bar \rho,\bar\rho+\dfrac{(2\mu(\bar\rho))^{\frac tN}N}{ct}\right].\end{equation}
We now claim that 
\begin{equation}\label{claimb}
g(\rho)\ge \mu(\rho)
\qquad \text{for all }
\rho\in \left[\bar \rho,\bar\rho+\dfrac{(2\mu(\bar\rho))^{\frac tN}N}{ct}\right].
\end{equation}
Indeed, we consider the set~$I:=\{\rho>\bar\rho:\ \mu(z)\ge g(z) \text{ for all }z\ge \rho\}$.
By construction, $I\subseteq [\bar\rho,+\infty)$.
Furthermore, if~$z\ge \bar\rho+[(2\mu(\bar\rho))^{\frac tN}N]/ct$
then~$g(z)=0\le\mu(z)$, therefore~$\bar\rho+[(2\mu(\bar\rho))^{\frac tN}N]/ct\in I$.
As a consequence, we can define~$R_*:= \inf I$,
and we have that
\begin{equation}\label{R*}
R_*\in [\bar\rho,\;
\bar\rho+[(2\mu(\bar\rho))^{\frac tN}N]/ct].\end{equation}
By definition of $R_*$, there exists a 
sequence $R_n\to R_*$, with $R_n\le R_*$, such that $g(R_n)>\mu(R_n)$.
Then, recalling~\eqref{usecondo} and~\eqref{4.5bis}, we have
\begin{equation}\begin{split}\label{leppa}
g(R_n)\,&>\mu(R_n)\\ &\ge2c\int_{R_n}^\infty \mu(z)^\frac{N-t}{N}dz
\\
&\ge 2c\int_{R_n}^{R_*} \mu(z)^\frac{N-t}{N}dz +
2c\int_{R_*}^\infty g(z)^\frac{N-t}{N}dz
\\
&= 2c\int_{R_n}^{R_*} \mu(z)^\frac{N-t}{N}dz + 2g(R_*).
\end{split}\end{equation}
Passing to the limit in \eqref{leppa} as $n\to +\infty$ we get
$g(R_*)\ge 2g(R_*)$, which means~$g(R_*)=0$. This implies
that~$R_*\ge 
\bar\rho+[(2\mu(\bar\rho))^{\frac tN}N]/ct$.

This information, combined with~\eqref{R*}, gives that~$R_*=
\bar\rho+[(2\mu(\bar\rho))^{\frac tN}N]/ct$, and this in turn
implies~\eqref{claimb}.

Then, we evaluate~\eqref{claimb} at~$\rho=\bar\rho+[(2\mu(\bar\rho))^{\frac tN}N]/ct$
and we obtain~\eqref{eqlu}.

\end{proof}

With the above result, we are able 
to prove the first statement in Theorem \ref{thexist},
concerning the existence of minimizers for small volumes. 
%That is, we have the following result:

\begin{theorem}\label{PRE-thexist}
For any $0\leq s<t\leq 1$, $t-s\geq \delta_0>0$, there exists
$\bar m_0=\bar m_0(N,\delta_0)>0$ such that for all $m\in(0,\bar m_0)$, problem
\eqref{problem} has a minimizer $F\subset \mathbb{R}^N$.
\end{theorem}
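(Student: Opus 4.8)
The plan is to run the direct method of the calculus of variations; the only genuine difficulty is the loss of compactness of minimizing sequences, caused by the possible escape of mass to infinity. I would circumvent it by minimizing first over large balls and then establishing a uniform a priori bound for the resulting constrained minimizers via Lemma~\ref{ID}.

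\textbf{Step 1 (constrained minimizers).} Fix $m<\bar m_0(N,\delta_0)$, with $\bar m_0$ small enough for all the estimates of Section~\ref{section2} to be available. For $R>0$ set $\phi_R(m):=\inf\{\F(E):|E|=m,\ E\subset B(0,R)\}$. A minimizing sequence for $\phi_R(m)$ has equibounded $t$-perimeter by Corollary~\ref{CC2}, hence — all competitors lying in the fixed ball $B(0,R)$ — is precompact in $L^1$ (compact fractional Sobolev embedding if $t<1$, $BV$-compactness if $t=1$). Since $P_t$ is lower semicontinuous under $L^1$-convergence (Fatou), while $P_s$ is \emph{continuous} along $L^1$-convergent sequences of equibounded $t$-perimeter (apply \eqref{basicestimates}, resp.\ \eqref{basicestimate} if $t=1$, to the pieces of the symmetric differences, which have bounded $t$-perimeter and vanishing measure), a standard argument produces a minimizer $F_R\subset B(0,R)$ of $\phi_R(m)$. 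Moreover $R\mapsto\phi_R(m)$ is non-increasing and $\phi_R(m)\to\phi(m)$, since any competitor $E$ with $|E|=m$, truncated at a large ball and rescaled back to volume $m$, has energy arbitrarily close to $\F(E)$. In particular $\F(F_R)\le\F(B_m)$ for $R\ge R(m):=(m/\omega_N)^{1/N}$, so Lemma~\ref{deficit} applies to $F_R$.

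\textbf{Step 2 (uniform bound; the crux).} I claim there is $R_*=R_*(N,\delta_0)$ with $F_R\subset B(0,R_*)$, up to translation, for all $R\ge 2R_*$. Translate $F_R$ so that $|F_R\Delta B_m|\le c_0\,m^{1+\frac{t-s}{2N}}$, and put $\mu(\rho):=|F_R\setminus B(0,\rho)|$: this is non-increasing, and for $\rho\ge R(m)$ it satisfies $\mu(\rho)\le c_0\,m^{1+\frac{t-s}{2N}}$, which for small $m$ is below $m_0$ and below $\eps\min(1,|F_R\cap B(0,\rho)|)$. Given $\rho\ge R(m)$ with $\mu(\rho)>0$, let $F_1:=F_R\cap B(0,\rho)$, $F_2:=F_R\setminus B(0,\rho)$; the size conditions \eqref{hp0} of Proposition~\ref{nonoptimality} hold. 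If \eqref{hp1} held too, the set $G=\sqrt[N]{1+|F_2|/|F_1|}\,F_1$ — which lies in $B(0,R)$, since $|F_2|/|F_1|$ is tiny and (for the $\rho\le R_*\le R/2$ that matter) $\rho\le R/2$ — would give $|G|=m$ and $\F(G)<\F(F_R)$, contradicting minimality of $F_R$. Hence \eqref{no-hp1} holds, and Lemma~\ref{Aux} yields
\[
\int_{F_1}\int_{F_2}\frac{dx\,dy}{|x-y|^{N+t}}\ \ge\ \frac{c_N}{t(1-t)}\,\mu(\rho)^{\frac{N-t}{N}}.
\]
Conversely, for $y\in F_2$ the ball $B(0,\rho)$ is disjoint from $B(y,|y|-\rho)$, so $\int_{B(0,\rho)}|x-y|^{-N-t}dx\le\frac{N\omega_N}{t}(|y|-\rho)^{-t}$, whence, by the co-area formula,
\[
\int_{F_1}\int_{F_2}\frac{dx\,dy}{|x-y|^{N+t}}\ \le\ -\frac{N\omega_N}{t}\int_\rho^{\infty}\mu'(z)(z-\rho)^{-t}\,dz.
\]
Combining the two and using that $(1-t)m^{t/N}\le m^{\delta_0/N}$ is small for $m$ small, one checks that $\mu$ satisfies hypothesis \eqref{U} of Lemma~\ref{ID} with $\bar\rho=R(m)$ and a constant $c=c(N,\delta_0)$ (for $\rho\ge R$ the inequality is trivial, and the thin layer just inside $\partial B(0,R)$, where $G$ might fail to lie in $B(0,R)$, is pushed beyond $R_*$ by the choice $R\ge 2R_*$). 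Lemma~\ref{ID} then gives $\mu\bigl(R(m)+\tfrac{(2m)^{t/N}N}{ct}\bigr)=0$, i.e.\ $F_R\subset B(0,R_*)$ for a suitable $R_*=R_*(N,\delta_0)$.

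\textbf{Step 3 (passage to the limit) and the obstacle.} Pick $R_k\to+\infty$ with $R_k\ge 2R_*$; by Step~2, after translations, $F_{R_k}\subset B(0,R_*)$, $|F_{R_k}|=m$, $\F(F_{R_k})\to\phi(m)$, and $P_t(F_{R_k})$ is equibounded. By the compactness of Step~1 we may assume $\chi_{F_{R_k}}\to\chi_F$ in $L^1(\R^N)$, with $F\subset\overline{B(0,R_*)}$ and $|F|=m$. Then $P_t(F)\le\liminf P_t(F_{R_k})$ by Fatou and $P_s(F_{R_k})\to P_s(F)$ as above, so $\F(F)\le\liminf\F(F_{R_k})=\phi(m)$, and since $|F|=m$ this forces $\F(F)=\phi(m)$: $F$ is a minimizer of \eqref{problem}. (For $s=0$ the problem reduces to the fractional, resp.\ classical, isoperimetric one and $F=B_m$; for $t=1$ one merely replaces $P_t$ by $P$ and the fractional compactness by $BV$-compactness.) The main obstacle is Step~2 — converting the non-optimality criterion of Proposition~\ref{nonoptimality} into an a priori bound that is \emph{uniform} in the auxiliary radius $R$, which is precisely what the integro-differential Lemma~\ref{ID} supplies, and where the smallness of $m$ (through Lemma~\ref{deficit} and through $(1-t)m^{t/N}\to 0$) is indispensable.
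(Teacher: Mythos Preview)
Your argument tracks the paper's closely---Proposition~\ref{nonoptimality}, Lemma~\ref{deficit}, and Lemma~\ref{ID} are exactly the tools the paper uses, and Steps~1 and~3 are correct. The one substantive difference is that you minimize first inside $B(0,R)$ and then look for a bound on $F_R$ that is uniform in $R$, whereas the paper applies the dichotomy (``\eqref{no-hp1} for every $\rho\ge r_m$'' versus ``\eqref{hp1} for some $\rho\ge r_m$'') directly to an \emph{unconstrained} minimizing sequence $F_k$, replacing it when necessary by the set $G_k$ produced by Proposition~\ref{nonoptimality}, which already lives in a universal ball.

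Your constrained setup creates a genuine gap in Step~2. To contradict the minimality of $F_R$ via Proposition~\ref{nonoptimality}, the competitor $G=\lambda F_1\subset B(0,\lambda\rho)$ must lie in $B(0,R)$; since $\lambda>1$, this fails for $\rho$ in the layer $(R/\lambda,R)$, and on that range you cannot deduce \eqref{no-hp1}, hence not the pointwise inequality needed for \eqref{U}. But Lemma~\ref{ID} genuinely requires \eqref{U} on the whole half-line $[\bar\rho,\infty)$: its proof integrates \eqref{U} over $[\sigma,\infty)$ for every $\sigma\ge\bar\rho$ to derive \eqref{usecondo}, and the comparison with $g$ then uses \eqref{usecondo} with the integral running to infinity. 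Having \eqref{U} only on $[R(m),R/\lambda]$ together with the trivial range $[R,\infty)$ leaves an uncontrolled strip of length $R(1-1/\lambda)\sim R\,m^{(t-s)/(2N)}$, which \emph{grows} with $R$; your parenthetical (``the thin layer \ldots\ is pushed beyond $R_*$'') does not close this, since being beyond $R_*$ says nothing about whether \eqref{U} holds there. The simplest repair is the paper's: drop the constraint, take a minimizing sequence $F_k$ with $\F(F_k)\le\F(B_m)$, and in the ``bad'' alternative simply replace $F_k$ by the $G_k$ of Proposition~\ref{nonoptimality} (a ball of volume $m$, or a small dilation of $F_k\cap B(0,\rho)$)---then there is no admissibility condition to check, and the resulting sequence already lies in a fixed ball, so your Step~3 applies verbatim.
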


\begin{proof}

Suppose $0<s<t<1$. We use the Direct Method of the Calculus of Variations. 
Let us consider a minimizing 
sequence $\{F_k\} \subset \mathbb{R}^N$, that is a sequence of sets of 
finite $t$-perimeter $F_k$ with $|F_k|=m$ such that 
\begin{equation}\label{78}
\lim_{k\to \infty} 
\F(F_k) =\phi(m). \end{equation}
Let also set $r_m:=(m/\omega_N)^{1/N}>0$, so that $|B(0,r_m)|=m$. 
Our goal is to show that
we can reduce ourselves to the case in which~$F_k$
lies in a large ball, independent of~$k$. More precisely,
we claim that there exist~$\rho_*>0$ and sets~$G_k$, with~$|G_k|=m$, such that
\begin{equation}\label{MI}
{\mbox{$G_k\subseteq B(0,\rho_*)$ \ and \ $\F(G_k)\le\F(F_k)$.}}
\end{equation}
To prove it, 
we take~$\rho\ge r_m$ and we set 
\begin{equation}\label{DD}
{\mbox{$X_k^\rho:=F_k\cap B(0,\rho)$ 
and $Y_k^\rho:=F_k\setminus B(0,\rho)$.}}\end{equation}
We distinguish two cases:
\begin{eqnarray}
{\mbox{either for any $\rho\ge r_m$ we have}}&&\label{case 1}
(1-t)[P_t(X_k^\rho)+P_t(Y_k^\rho)-P_t(F_k)]\ge
\frac{\F(Y_k^\rho)}{2}
\\ {\mbox{or there exists $\rho\ge r_m$ such that}}&&\label{case 2}
(1-t)[P_t(X_k^\rho)+P_t(Y_k^\rho)-P_t(F_k)]\leq
\frac{\F(Y_k^\rho)}{2}.
\end{eqnarray}
Let us first deal with~\eqref{case 1}.
In this case
we can apply Lemma~\ref{Aux} using the setting 
in~\eqref{F2}: accordingly, from~\eqref{Au0} we see that 
$$ 
\int_{X_k^{\rho}}\int_{Y_k^{\rho}}\frac{dx\,dy}{|x-y|^{N+t}} \ge  \frac{c_N}{t(1-t)}  |Y_k^{\rho}|^{\frac{N-t}N}
.$$ 
Let us define the non-increasing function 
$\eta(\rho):=|F_k\setminus B(0,\rho)|=|Y^\rho_k|$. Note that by the co-area formula
$$ 
\eta'(\rho)=-\mathcal H^{N-1}(\partial B(0,\rho)\cap F), \quad \text{for a. e. } \rho>0.
$$

Proceeding as in the proof of Lemma  \ref{Aux}, we have
\begin{eqnarray*}
\int_{Y_k^{\rho}}\int_{X_k^{\rho}}\frac{dx\,dy}{|x-y|^{N+t}} &\leq&
\int_{Y_k^{\rho}}\int_{B(0,\rho)}\frac{dx\,dy}{|x-y|^{N+t}}
\\
&\leq& 
\int_{Y_k^{\rho}}\left(\int_{(B(y,|y|-\rho))^c}\frac{dx}{|x-y|^{N+t}}\right)dy
\\
&\leq& -\frac{N\omega_N}{t} \int_{\rho}^{\infty}
\eta'(z)(z-\rho)^{-t}\, dz,
\end{eqnarray*}
whence
$$ -\int_{\rho}^\infty \eta'(z)(z-\rho)^{-t}\, dz\geq 
\frac{c_N}{1-t} \eta(\rho)^{\frac{N-t}{N}},$$
that is, $\eta$ satisfies inequality~\eqref{U}.
We now apply Lemma~\ref{ID} with~$\mu=\eta$, $c=c_N/3$
and~$\bar\rho=r_m$. Notice that, possibly reducing $\bar m_0$,
we can ensure that condition \eqref{condc} is satisfied.
{}From \eqref{eqlu} we conclude that
$$
\eta\left(r_m+\frac{3(2m)^{\frac tN}N}{c_N t}\right)=0\,, 
$$
that is,
\[
F_k\subseteq B\left(0,r_m+\frac{3(2m)^{\frac tN}N}{c_Nt}\right).
\]
\noindent
This proves~\eqref{MI} with $\rho_*$ given by
$$
\rho_*:=r_m+\frac{3(2m)^{\frac t N}N}{c_Nt}
$$ 
in the case where~\eqref{case 1} holds (here one can take~$G_k:=F_k$).

We now deal with case~\eqref{case 2}.
In this case, 
we use \eqref{deficit2}
and we obtain (up to a translation of~$F_k$ that is still denoted
by~$F_k$) that 
$$|F_k\setminus B(0,r_m)|+|B(0,r_m)\setminus F_k|=
|F_k\,\Delta \,B(0,r_m)|\leq c_0 \,m^{1+\frac{t-s}{2N}},$$
$c_0$ as in Lemma~\ref{deficit}.
In particular, if~$\rho\ge r_m$ is the one given by~\eqref{case 2}
we have that
\begin{eqnarray*}
|F_k\cap B(0,\rho)| &\ge& |F_k\cap B(0,r_m)|
\\ &=& |B(0,r_m)| -|B(0,r_m)\setminus F_k|
\\ &\ge& m-c_0\, m^{1+\frac{t-s}{2N}}
\\ &\ge&\frac{m}{2}
\end{eqnarray*}
if~$m$ is small enough, i.~e.
\begin{equation}\label{stima_m_0}
m\leq \left[\frac{1}{2c_0}\right]^{\frac{2N}{t-s}}
\end{equation}
and moreover
$$ |F_k\setminus B(0,\rho)|\le |F_k\setminus B(0,r_m)|
\leq c_0\,m^{1+\frac{t-s}{2N}}.$$
Therefore, for small~$m$, recalling~\eqref{DD} we see that
\begin{equation*}
\begin{split}
& 2c_0\, m^{\frac{t-s}{2N}}\,\min\big(1,|X_k^\rho|\big)
=
 2c_0\, m^{\frac{t-s}{2N}}\,\min\big(1,|F_k\cap B(0,\rho)|\big)\ge 
 2c_0\, m^{\frac{t-s}{2N}}\,
\frac{m}{2}\\
&\qquad=c_0\, m^{1+\frac{t-s}{2N}}\ge |F_k\setminus B(0,\rho)|=|Y_k^\rho|.
\end{split}\end{equation*}
Thanks to this and~\eqref{case 2},
we can apply Proposition~\ref{nonoptimality},
with~$\eps:=2c_0\, m^{\frac{t-s}{2N}}$,
$F_1:=X_k^\rho$ and $F_2:=Y_k^\rho$.

Hence, from Proposition~\ref{nonoptimality},
we find~$G_k$ such that~$\F(G_k)\le\F(F_k)$;
notice also that, in light of~\eqref{G},
we know that~$G_k$ is either a ball or a dilation of~$X_k^\rho$,
which is contained in~$B(0,2\rho)$. Thus also $G_k$
is contained in a ball of universal radius, and this
establishes~\eqref{MI} also in case~\eqref{case 2}.

Thus, by~\eqref{MI},
we have constructed a minimizing sequence~$G_k$
that is uniformly contained in a fixed ball. 
By Proposition~\ref{CC}, we also obtain that
$$ (1-t)P_t(G_k)\le 2[\F(G_k)+c_0 m]\le 2[\F(B(0,r_m))+c_0 m],$$
hence the $t$-perimeter of~$G_k$ is bounded uniformly in~$k$.

By the compact embedding of $H^\frac{t}{2}$ into $H^\frac{s}{2}$ 
(see \cite[Section 7]{DPV12}), 
up to extracting a subsequence, the sets $G_k$ converge in $W^{s,1}$ 
(hence also in $L^1$) to a limit set $G$, and it holds
$$ \lim_{k\to+\infty} P_s(G_k)=P_s(G).$$
The lower semicontinuity of the $t$-perimeter yields that
$$ \liminf_{k\to+\infty} P_t(G_k)\ge P_t(G)$$
Hence, by~\eqref{78} and~\eqref{MI},
\begin{eqnarray*} 
\F(G)&=&(1-t)P_t(G)-sP_s(G)\le
\liminf_{k\to+\infty}[(1-t) P_t(G_k)-sP_s(G_k)]\\
&=&
\liminf_{k\to+\infty} \F(G_k)
\le\liminf_{k\to+\infty} \F(F_k)\le\phi(m),
\end{eqnarray*}
hence~$\F(G)=\phi(m)$ and so~$F:=G$ is the desired minimizer.

In the case $0=s<t\leq 1$, our problem reduces to the (fractional) isoperimetric problem, hence it is well known that there exists a minimizer $F$ for \eqref{problem} and it is a ball of volume $m$, for any $m>0$. 

When $0<s<t=1$ the previous arguments can be easily adapted, 
including the analog of Lemma \ref{ID} which becomes an ordinary differential inequality, 
and the only difference is that one needs to use the compact 
embedding of $BV$ into $H^\frac{s}{2}$ for $0<s<1$.
\end{proof}
 
\section{Regularity of minimizers}\label{section4} 

The aim of this section is 
to prove the regularity and rigidity theory
necessary to prove the second statement in Theorem~\ref{thexist}
and Theorem~\ref{thmain}.
We begin with a
simple observation.
\begin{lemma}\label{lem0}
Let $\phi$ be the function describing problem \eqref{problem}. Then $F$ is a minimizer of  $\phi(m)$ if and only
if $F/m^{1/N}$ is a minimizer of problem
\[
\min\{(1-t)P_t(U)-m^{\frac{t-s}{N}}sP_s(U):|U|=1\}. 
\]
\end{lemma}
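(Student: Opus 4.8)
The plan is to verify this directly by examining how the two perimeter functionals scale under the dilation $U = F/m^{1/N}$, which has unit volume whenever $|F| = m$. Recall the homogeneity of the fractional perimeter under dilation: for any $\lambda > 0$ and measurable $E$, a change of variables in the double integral defining $P_\alpha$ gives $P_\alpha(\lambda E) = \lambda^{N-\alpha} P_\alpha(E)$. I would state this as the first step (it is the same computation already used repeatedly, e.g.\ in Lemma~\ref{lema} and in the proof of Lemma~\ref{int_est}).

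Applying this with $\lambda = m^{-1/N}$ and $E = F$, I would write, for $0 < s < t < 1$,
\[
\F(F) = (1-t)P_t(F) - sP_s(F)
= m^{\frac{N-t}{N}}(1-t)P_t(U) - m^{\frac{N-s}{N}} s P_s(U),
\]
and then factor out $m^{(N-t)/N}$ to obtain
\[
\F(F) = m^{\frac{N-t}{N}}\Big[(1-t)P_t(U) - m^{\frac{t-s}{N}} s P_s(U)\Big].
\]
Since $m^{(N-t)/N}$ is a fixed positive constant (independent of the competitor), minimizing $\F$ over sets of volume $m$ is equivalent to minimizing the bracketed functional over sets of volume $1$. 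The correspondence $F \leftrightarrow U = F/m^{1/N}$ is a bijection between $\{|F| = m\}$ and $\{|U| = 1\}$, so $F$ realizes the infimum in \eqref{problem} if and only if $U$ realizes the infimum of $(1-t)P_t(U) - m^{(t-s)/N} s P_s(U)$ over unit-volume sets. For the boundary cases $t = 1$ and/or $s = 0$ appearing in \eqref{Fst}, the same identity holds after replacing $(1-t)P_t$ by $N\omega_N P$ (which scales like $\lambda^{N-1}$, i.e.\ as the case ``$t=1$'') and/or $sP_s$ by $N\omega_N |\cdot|$ (which scales like $\lambda^N$, i.e.\ as ``$s=0$''); in each instance the common prefactor is $m^{(N-t)/N}$ with the convention $t = 1$, so the argument is unchanged.

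There is really no substantial obstacle here: the statement is a pure scaling observation, and the only thing to be careful about is bookkeeping the four cases in the definition of $\F$ uniformly, which the convention ``$(1-t)P_t$ reads as $N\omega_N P$ when $t=1$'' and ``$sP_s$ reads as $N\omega_N|\cdot|$ when $s=0$'' handles cleanly. I would therefore present the proof in the $0<s<t<1$ case in full and remark that the other cases follow verbatim with these substitutions.
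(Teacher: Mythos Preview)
Your proof is correct and follows the same approach as the paper: both use the scaling identity $P_\alpha(\lambda E)=\lambda^{N-\alpha}P_\alpha(E)$ with $\lambda=m^{1/N}$ and factor out $m^{(N-t)/N}$. You add an explicit discussion of the boundary cases $t=1$ and $s=0$, which the paper omits, but the core argument is identical.
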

\begin{proof}
Let $F\subseteq\R^N$ such that $|F|=m$ and let $U=F/m^{1/N}$. Then
\begin{eqnarray*}
  (1-t)P_t(F)-sP_s(F)&=&(1-t)P_t(m^{1/N}U)-sP_s(m^{1/N}U)\\
  &=&m^{\frac{N-t}{N}}\left[(1-t)P_t(U)-m^{\frac{t-s}{N}}sP_s(U)\right],
 \end{eqnarray*}
which gives the desired result.
\end{proof}
The previous lemma allow us to consider, in what follows, the functional
\begin{equation*}
 \Feps=(1-t)P_t-\eps s P_s,
\end{equation*}
where we set $\eps=m^{(t-s)/N}$. Indeed, the behavior of a minimizer of $\phi(m)$ is the same, up to a
rescaling, to that of
\begin{equation}\label{Feps}
\min\left\{\Feps(E):|E|=\omega_N\right\}.
\end{equation}
Indeed
\begin{equation}\label{scale}\begin{split}
&{\mbox{$F$ is a minimizer for problem~\eqref{Feps} if and only if}}\\
&{\mbox{$\left(\frac{m}{\omega_N}\right)^{\frac1N}F$ is a minimizer for problem~\eqref{problem}
with~$\eps=\left(\frac{m}{\omega_N}\right)^{\frac{t-s}N}$.}}
\end{split}\end{equation}
The next lemma allows us to say that if $F$ is a set of $\mathbb{R}^N$ such that $||F|-\omega_N|$ is small enough
than the volume constraint can be dropped. Let us consider the following problem:
\begin{equation}\label{Geps}
\min\left\{\mathcal{G}_{\eps,\Lambda}(E):||E|-\omega_N|<\Lambda^{-1}\right\},
\end{equation}
for some $\Lambda>0$, where
\begin{equation*}
\mathcal{G}_{\eps,\Lambda}(E)=(1-t)P_t(E)-\eps s P_s(E)+\Lambda ||E|-\omega_N|.
\end{equation*}

Letting 
\begin{equation}\label{e0}
\eps_0:=\left(\frac{\bar m_0}{\omega_N}\right)^\frac{t-s}{N},
\end{equation}
with $\bar m_0$ as in Theorem~\ref{PRE-thexist}, we have the following result:

\begin{lemma}\label{equiv}
There exists $\Lambda_0=\Lambda_0(N,\delta_0)>0$
%, depending on $N$, $s_0$ and $t_0$, 
such that $F_\varepsilon$ is a volume constrained minimizer 
of problem \eqref{Feps}, with $\eps< \eps_0$, 
if and only if $F_\varepsilon$ is a minimizer of problem \eqref{Geps}, 
for any $\Lambda\ge\Lambda_0(1+\eps_0)$.
\end{lemma}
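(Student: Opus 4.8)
The statement is the standard equivalence between a volume-constrained problem and a penalized problem, as in the work of Esposito–Fusco, Cicalese–Leonardi and Figalli–Fusco–Maggi–Millot–Morini on nonlocal isoperimetric problems. The plan is to prove the two implications separately, the nontrivial one being that a penalized minimizer automatically satisfies the volume constraint once $\Lambda$ is large. First I would record the easy direction: if $F_\eps$ is a minimizer of $\mathcal G_{\eps,\Lambda}$, then for every competitor $E$ with $|E|=\omega_N$ (in particular $E$ admissible in \eqref{Geps} since $||E|-\omega_N|=0<\Lambda^{-1}$) one has $\mathcal G_{\eps,\Lambda}(F_\eps)\le\mathcal G_{\eps,\Lambda}(E)=\Feps(E)$, so it suffices to know $|F_\eps|=\omega_N$; thus this direction also reduces to the volume-saturation claim. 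So the whole content is: \emph{any minimizer $F$ of \eqref{Geps} with $\Lambda$ large has $|F|=\omega_N$.}

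For that claim I would argue by contradiction using a rescaling. Suppose $|F|\neq\omega_N$, say $m:=|F|<\omega_N$ (the case $m>\omega_N$ is symmetric). Set $\sigma:=(\omega_N/m)^{1/N}>1$ and $\widetilde F:=\sigma F$, so $|\widetilde F|=\omega_N$ and $||\widetilde F|-\omega_N|=0$, hence $\widetilde F$ is admissible in \eqref{Geps}. Then compute, using the scaling $P_\alpha(\sigma F)=\sigma^{N-\alpha}P_\alpha(F)$,
\[
\mathcal G_{\eps,\Lambda}(\widetilde F)-\mathcal G_{\eps,\Lambda}(F)
=(1-t)(\sigma^{N-t}-1)P_t(F)-\eps s(\sigma^{N-s}-1)P_s(F)-\Lambda(\omega_N-m).
\]
The perimeters $(1-t)P_t(F)$ and $\eps s P_s(F)$ are bounded by a constant depending only on $N,\delta_0$ (and $\eps_0$): indeed $\mathcal G_{\eps,\Lambda}(F)\le\mathcal G_{\eps,\Lambda}(B)$ for the unit ball $B$ since $||B|-\omega_N|=0$, which together with Proposition~\ref{CC} applied to $\Feps$ (equivalently Corollary~\ref{CC2} after the elementary rescaling of Lemma~\ref{lem0}) yields $(1-t)P_t(F)\le C(N,\delta_0)$ and then $\eps s P_s(F)\le C(N,\delta_0)$ from \eqref{basicestimates}; note $||F|-\omega_N|<\Lambda^{-1}$ keeps $|F|$ in a fixed bounded range so these bounds are uniform. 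Writing $\sigma^{N-t}-1\le C(N)(\sigma-1)$ and $\sigma^{N-s}-1\ge c(N)(\sigma-1)\ge0$ on the relevant range of $\sigma$ (which is bounded since $m\ge\omega_N-\Lambda^{-1}$), and $\omega_N-m=|B|(\sigma^{-N})^{-1}\cdot(\cdots)$, more simply $\omega_N-m\ge c(N)(\sigma-1)$, we get
\[
\mathcal G_{\eps,\Lambda}(\widetilde F)-\mathcal G_{\eps,\Lambda}(F)\le C(N,\delta_0)(\sigma-1)-\Lambda c(N)(\sigma-1)=(\sigma-1)\bigl(C(N,\delta_0)-c(N)\Lambda\bigr),
\]
which is strictly negative as soon as $\Lambda\ge\Lambda_0(N,\delta_0):=C(N,\delta_0)/c(N)+1$, contradicting minimality of $F$. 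The case $m>\omega_N$ is handled by shrinking, $\sigma<1$, where now the $\Lambda$-term $\Lambda(m-\omega_N)$ has the favorable sign and the perimeter terms decrease, so $\mathcal G_{\eps,\Lambda}(\sigma F)<\mathcal G_{\eps,\Lambda}(F)$ for \emph{every} $\Lambda>0$; this already forces $m\le\omega_N$.

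\textbf{Main obstacle.} The only delicate point is making the a priori perimeter bound on $F$ genuinely \emph{uniform} in $\eps<\eps_0$ and in $\Lambda\ge\Lambda_0(1+\eps_0)$, since the competitor inequality $\mathcal G_{\eps,\Lambda}(F)\le\mathcal G_{\eps,\Lambda}(B)=\Feps(B)$ has a right-hand side bounded independently of $\Lambda$, and Proposition~\ref{CC} gives $(1-t)P_t(F)\le 2\bigl(\Feps(B)+c_0|F|\bigr)+2\Lambda||F|-\omega_N|\le 2\bigl(\Feps(B)+c_0(\omega_N+1)\bigr)+2$ using $\Lambda||F|-\omega_N|<1$; crucially $\Lambda$ multiplies a quantity $<\Lambda^{-1}$, so it contributes at most $1$. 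Hence the bound is uniform, and the constants $C(N,\delta_0),c(N)$ above are legitimately independent of $\eps,\Lambda$, so $\Lambda_0$ depends only on $N$ and $\delta_0$ as claimed; the hypothesis $\Lambda\ge\Lambda_0(1+\eps_0)$ is exactly what absorbs the $\eps_0$-dependent constant coming from the $\eps sP_s$ term. Once this is in place, both implications of the lemma follow as explained above.
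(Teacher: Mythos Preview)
Your rescaling argument is exactly the mechanism the paper uses, but two points in your write-up need correction.

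First, the logical reduction is not quite right. Showing that every minimizer of \eqref{Geps} has volume $\omega_N$ yields the implication \eqref{Geps}$\Rightarrow$\eqref{Feps}, but not the converse: to deduce from this that a volume-constrained minimizer $F_\eps$ of \eqref{Feps} also minimizes \eqref{Geps}, you would need to know that \eqref{Geps} actually admits a minimizer, which is not available at this stage. The paper instead proves \eqref{Feps}$\Rightarrow$\eqref{Geps} directly by contradiction: if some admissible $E$ satisfies $\G_{\eps,\Lambda}(E)<\G_{\eps,\Lambda}(F_\eps)=\Feps(F_\eps)\le\Feps(B)$, then your perimeter bounds apply to $E$ (they only use $\G_{\eps,\Lambda}(E)\le\G_{\eps,\Lambda}(B)$, not minimality), and rescaling $E$ to volume $\omega_N$ produces $\tilde E$ with $\Feps(\tilde E)\le\Feps(E)+c_0(1+\eps_0)\big||E|-\omega_N\big|<\Feps(F_\eps)$, a contradiction. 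So your estimate is correct but must be applied to the competitor $E$, not to a hypothetical minimizer of \eqref{Geps}; this is only an organizational fix.

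Second, the case $m>\omega_N$ is mishandled. With $\sigma<1$ one has $\sigma^{N-s}-1<0$, hence $-\eps s(\sigma^{N-s}-1)P_s(F)>0$: the $P_s$ contribution is \emph{positive} when you shrink, so it is false that ``the perimeter terms decrease'' or that $\G_{\eps,\Lambda}(\sigma F)<\G_{\eps,\Lambda}(F)$ for \emph{every} $\Lambda>0$. You must treat this case like the other one, bounding $\eps s\,|\sigma^{N-s}-1|\,P_s(F)\le C(N,\delta_0)\,\eps_0\,|\sigma-1|$ via your uniform estimate on $\eps sP_s(F)$ and absorbing it with the $\Lambda$-term; this is precisely where the factor $(1+\eps_0)$ in the threshold $\Lambda\ge\Lambda_0(1+\eps_0)$ genuinely enters. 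The paper handles both signs of $|E|-\omega_N$ at once via the two-sided expansions \eqref{qt1}--\eqref{qt2}.
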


\begin{proof}
First, let~$F_\varepsilon$ be
a minimizer of problem \eqref{Geps} with~$|F_\varepsilon|=\omega_N$.
Then, for any set~$G$ with~$|G|=\omega_N$, we have that
$$ \Feps(G)=\mathcal{G}_{\varepsilon,\Lambda}(G)
\ge \mathcal{G}_{\varepsilon,\Lambda}(F_\varepsilon)=
\Feps(F_\varepsilon),$$
which shows that~$F_\varepsilon$ is
a minimizer of problem \eqref{Feps}.
 
Viceversa, 
we prove that a volume constrained minimizer 
$F_\varepsilon$ of problem \eqref{Feps}, with $\eps<\eps_0$, 
is also a minimizer of \eqref{Geps} for any $\Lambda$ sufficiently large. 
For this, we argue by contradiction and 
we assume that there exist a sequence $\Lambda_n\to +\infty$,
and sets $E_n\subset\R^N$ such that,
letting $\G_n:=\G_{\eps,\Lambda_n}$, we have 
\begin{equation}\label{q1}
\G_n(E_n)<\G_n(F_\eps)=\Feps(F_\eps).
\end{equation}
Notice that for all $n\in\mathbb N$ there holds
\begin{equation}\label{q2}
\sigma_n:= \big||E_n|-\omega_N\big| > 0.
\end{equation}
Indeed, if by contradiction we suppose that~$\sigma_n=0$ 
for some $n\in\mathbb N$,
we would have that $|E_n|=\omega_N$, thus
$$\G_n(E_n)=\Feps(E_n)
\ge \Feps(F_\eps),
$$
due to the minimality of~$F_\eps$. This would be
in contradiction with~\eqref{q1}, and so~\eqref{q2}
is proved.
We also claim that there exists a constant $c_0>0$
independent of $n$, such that  
\begin{equation}\label{q4}
(1-t)P_t(E_n)\le c_0 \qquad \text{and}\qquad 
sP_s(E_n)\le c_0 \qquad \text{for all $n\in\mathbb N$.}
\end{equation}
To show this, proceeding as in Proposition~\ref{CC} and thanks to~\eqref{q1}, we see that, for $\Lambda_n\ge
c_0\,\eps_0^\frac{t}{t-s}$, we have
\begin{eqnarray*}
(1-t)P_t(E_n)&\le& 2\left[\Feps(E_n)+c_0 \eps_0^\frac{t}{t-s} |E_n|\right] 
\\
&\le& 2\left[\Feps(E_n)+c_0 \, \eps_0^\frac{t}{t-s} \big||E_n|-\omega_N\big|
+ \omega_N c_0 \eps_0^\frac{t}{t-s}\right]
\\
&\le& 2\left[\G_n(E_n)+\omega_Nc_0\, \eps_0^\frac{t}{t-s}\right]\\
&\le& 2\left[\Feps(F_\eps)+\omega_Nc_0\,\eps_0^\frac{t}{t-s}\right] \\
&\le& 2\left[\Feps(B)+\omega_Nc_0\,\eps_0^\frac{t}{t-s}\right] \\
&\le& 2\left[(1-t)P_t(B)+\omega_Nc_0\,\eps_0^\frac{t}{t-s}\right],
\end{eqnarray*}
recalling that $B$ denotes the ball centered in $0$ and radius $1$, with $|B(0,1)|=\omega_N$.
This gives the bound for $(1-t)P_t(E_n)$, and then the bound on~$sP_s(E_n)$
follows from~\eqref{basicestimates}. This proves~\eqref{q4}.

{}From \eqref{q1} and \eqref{q4} it follows
that $\Lambda_n\sigma_n$ is also uniformly bounded, that is,
\[
\sigma_n \le \frac{c_0}{\Lambda_n}\to 0\qquad 
\text{as }n\to +\infty.
\]
Moreover, for $\sigma_n\le 1/2$ we have, supposing $\sigma_n=|E_n|-\omega_N>0$ (the other case can be treated in a similar way),
\begin{equation}\label{qt1} 
 \left(\frac{ |E_n|}{\omega_N}\right)^{-\frac{N-s}{N}} \ge \left(1+\frac{\sigma_n}{\omega_N}\right)^{-\frac{N-s}{N}}
\ge 1 -\frac{N-s}{N}\frac{\sigma_n}{\omega_N}\,, 
\end{equation}
and similarly 
\begin{equation}\label{qt2}
 \left(\frac{ |E_n|}{\omega_N}\right)^{-\frac{N-t}{N}} \le \left(1-\frac{\sigma_n}{\omega_N}\right)^{-\frac{N-t}{N}}
\le 1+C\frac{N-t}{N}\frac{\sigma_n}{\omega_N},
\end{equation}
with $C=C(N,s,t)$.
We now define
$$
\tilde{E}_n= \left(\frac{ |E_n|}{\omega_N}\right)^{-\frac 1 N} E_n\,,
$$
and we use~\eqref{q4}, \eqref{qt1} and~\eqref{qt2} to obtain
\begin{eqnarray*}
&& sP_s(\tilde{E}_n)= \left(\frac{ |E_n|}{\omega_N}\right)^{-\frac{N-s}N}
sP_s(E_n)\geq \left(1-\frac{N-s}{N}\frac{\sigma_n}{\omega_N}\right)sP_s(E_n)
\ge sP_s(E_n)-c_0\sigma_n,
\\
{\mbox{and }}&& (1-t)P_t(\tilde{E}_n)=\left(\frac{ |E_n|}{\omega_N}\right)^{-\frac{N-t}N}
(1-t)P_t(E_n)
\le (1-t)P_t(E_n)+c_0\sigma_n\,,
\end{eqnarray*}
where the constant $c_0$ may differ from line to line.

Therefore, since~$|\tilde{E}_n|=\omega_N$, the minimality of~$F_\eps$ gives
\begin{eqnarray*} 
\Feps(F_\eps)&\le& \Feps(\tilde{E}_n)
\,=\,(1-t)P_t(\tilde{E}_n)-\eps s P_s(\tilde{E}_n)
\\
&\le& (1-t)P_t(E_n)-\eps s P_s(E_n)+c_0(1+\eps_0)\sigma_n
\\
&=&\Feps(E_n)+c_0(1+\eps_0)\sigma_n.
\end{eqnarray*}
By plugging this into~\eqref{q1} we find that
\begin{eqnarray*} 
\G_\eps(E_n)&<&\Feps(F_\eps)\le
\Feps(E_n)+c_0(1+\eps_0)\sigma_n\\
&=&\G_\eps(E_n)-\Lambda_n \sigma_n+c_0(1+\eps_0)\sigma_n.
\end{eqnarray*}
We simplify the term $\G_\eps(E_n)$ and we divide by~$\sigma_n$, 
which is possible thanks to~\eqref{q2}, we conclude that
$$ 0< -\Lambda_n + c_0(1+\eps_0).$$
This gives a contradiction for $\Lambda_n$ large enough,
and proves that~$F_\eps$ is a minimizer for problem~\eqref{Geps}.
\end{proof}

\begin{lemma}\label{lemuno}
Let $F_\varepsilon$ be a minimizer of problem \eqref{Geps} 
with $\varepsilon<\varepsilon_0$ and $\Lambda\ge \Lambda_0$, $\varepsilon_0$ and $\Lambda_0$ as in Lemma~\ref{equiv},
and let $E_\varepsilon$ be a set of finite perimeter with $\big||E_\varepsilon|-\omega_N\big|<1/\Lambda$. Then,
\begin{eqnarray}\label{stimamin}
\nonumber(1-t)P_t(F_\varepsilon)&\le &(1-t)P_t(E_\varepsilon) + \varepsilon \,c_N\left(1-\frac st\right)^{-1}
|F_\varepsilon\Delta E_\varepsilon|^{1-\frac st}[(1-t)P_t(F_\varepsilon\Delta E_\varepsilon)]^{\frac st}\\
&&+\Lambda\big||E_\varepsilon|-\omega_N\big|\big|.
\end{eqnarray}
\end{lemma}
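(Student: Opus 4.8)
The plan is to test the minimality of $F_\varepsilon$ for problem~\eqref{Geps} against the competitor $E_\varepsilon$, which is admissible precisely because $\big||E_\varepsilon|-\omega_N\big|<\Lambda^{-1}$, and then to control the resulting difference of $s$-perimeters by the interpolation inequality~\eqref{basicestimates} applied to the symmetric difference $F_\varepsilon\,\Delta\,E_\varepsilon$.

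First I would write out $\mathcal{G}_{\varepsilon,\Lambda}(F_\varepsilon)\le\mathcal{G}_{\varepsilon,\Lambda}(E_\varepsilon)$, namely
\[
(1-t)P_t(F_\varepsilon)-\varepsilon s P_s(F_\varepsilon)+\Lambda\big||F_\varepsilon|-\omega_N\big|
\ \le\ (1-t)P_t(E_\varepsilon)-\varepsilon s P_s(E_\varepsilon)+\Lambda\big||E_\varepsilon|-\omega_N\big|
\]
(one may assume $P_t(E_\varepsilon)<+\infty$, otherwise~\eqref{stimamin} is trivial, and then $P_s(E_\varepsilon)<+\infty$ by~\eqref{basicestimates}). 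Discarding the nonnegative summand $\Lambda\big||F_\varepsilon|-\omega_N\big|$ on the left and rearranging gives
\[
(1-t)P_t(F_\varepsilon)\ \le\ (1-t)P_t(E_\varepsilon)+\varepsilon s\big[P_s(F_\varepsilon)-P_s(E_\varepsilon)\big]+\Lambda\big||E_\varepsilon|-\omega_N\big|.
\]

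The core of the argument is then the sub-additivity estimate $P_s(F_\varepsilon)-P_s(E_\varepsilon)\le P_s(F_\varepsilon\,\Delta\,E_\varepsilon)$. I would prove it by splitting $\R^N$ into the four pairwise disjoint pieces $A:=F_\varepsilon\cap E_\varepsilon$, $B:=F_\varepsilon\setminus E_\varepsilon$, $C:=E_\varepsilon\setminus F_\varepsilon$, $D:=(F_\varepsilon\cup E_\varepsilon)^c$, setting $[X,Y]:=\int_X\int_Y|x-y|^{-N-s}\,dx\,dy$, and expanding as in Lemma~\ref{rmk1}:
\[
P_s(F_\varepsilon)=[A,C]+[A,D]+[B,C]+[B,D],
\]
\[
P_s(E_\varepsilon)=[A,B]+[A,D]+[B,C]+[C,D],
\]
\[
P_s(F_\varepsilon\,\Delta\,E_\varepsilon)=[A,B]+[A,C]+[B,D]+[C,D].
\]
Subtracting, one finds $P_s(F_\varepsilon\,\Delta\,E_\varepsilon)-P_s(F_\varepsilon)+P_s(E_\varepsilon)=2[A,B]+2[C,D]\ge0$, which is the claim.

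Finally I would apply the interpolation inequality~\eqref{basicestimates} of Lemma~\ref{int_est} to the set $F_\varepsilon\,\Delta\,E_\varepsilon$, which after multiplying through by $s$ reads
\[
s P_s(F_\varepsilon\,\Delta\,E_\varepsilon)\ \le\ c_N\Big(1-\frac st\Big)^{-1}|F_\varepsilon\,\Delta\,E_\varepsilon|^{1-\frac st}\big[(1-t)P_t(F_\varepsilon\,\Delta\,E_\varepsilon)\big]^{\frac st},
\]
and insert this together with the sub-additivity bound into the rearranged minimality inequality; this is precisely~\eqref{stimamin}. I expect the only non-mechanical step to be the sub-additivity estimate $P_s(F_\varepsilon)-P_s(E_\varepsilon)\le P_s(F_\varepsilon\,\Delta\,E_\varepsilon)$; everything else is bookkeeping. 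The degenerate cases need no new idea: for $s=0$ one replaces $s P_s$ by the Lebesgue measure and uses $\big||F_\varepsilon|-|E_\varepsilon|\big|\le|F_\varepsilon\,\Delta\,E_\varepsilon|$, while for $t=1$ one substitutes $(1-t)P_t$ by $P$ as in the remark following Lemma~\ref{int_est}.
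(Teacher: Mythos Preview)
Your proof is correct and follows essentially the same approach as the paper: use the minimality inequality $\mathcal{G}_{\varepsilon,\Lambda}(F_\varepsilon)\le\mathcal{G}_{\varepsilon,\Lambda}(E_\varepsilon)$, drop the nonnegative volume term on the left, prove the sub-additivity bound $P_s(F_\varepsilon)-P_s(E_\varepsilon)\le P_s(F_\varepsilon\,\Delta\,E_\varepsilon)$ via a four-region decomposition, and finish with the interpolation estimate~\eqref{basicestimates}. Your computation of the sub-additivity step is in fact slightly more explicit than the paper's, since you identify the exact nonnegative remainder $2[A,B]+2[C,D]$ rather than just bounding by dropping negative terms.
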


\begin{proof}
Notice that, denoting by $\int_U=\int_U f$ for a non-negative function $f$,  the following computation holds
\[
 \int_{F_\varepsilon}\int_{F_\varepsilon^c}=
\int_{F_\varepsilon\setminus E_\varepsilon}\int_{(F_\varepsilon\cup E_\varepsilon)^c}+
\int_{F_\varepsilon\setminus E_\varepsilon}\int_{E_\varepsilon\setminus
F_\varepsilon}+\int_{F_\varepsilon\cap E_\varepsilon}\int_{(F_\varepsilon\cup
E_\varepsilon)^c}+\int_{F_\varepsilon\cap E_\varepsilon}\int_{E_\varepsilon\setminus F_\varepsilon}.
\]
By interchanging the roles of $F_\varepsilon$ and $E_\varepsilon$, and setting $f(x,y)=|x-y|^{-(s+N)}$ we get
\begin{equation}\label{P21}
\begin{aligned}
P_s(F_\varepsilon)-P_s(E_\varepsilon)&=
\int_{F_\varepsilon\setminus E_\varepsilon}\int_{(F_\varepsilon\cup E_\varepsilon)^c}-\int_{E_\varepsilon\setminus
F_\varepsilon}\int_{(F_\varepsilon\cup E_\varepsilon)^c}
+\int_{F_\varepsilon\cap E_\varepsilon}
\int_{E_\varepsilon\setminus F_\varepsilon}-
\int_{E_\varepsilon\cap F_\varepsilon}
\int_{F_\varepsilon\setminus E_\varepsilon}
\\
 &\le \int_{F_\varepsilon\setminus
E_\varepsilon}\int_{(F_\varepsilon\cup E_\varepsilon)^c}+\int_{E_\varepsilon\setminus
F_\varepsilon}\int_{F_\varepsilon\cap E_\varepsilon}\le 
P_s(F_\varepsilon\Delta E_\varepsilon).
\end{aligned}
\end{equation}
Therefore, by the minimality of $F_\varepsilon$ we get  
\[
\begin{aligned}
(1-t)P_t(F_\varepsilon)&\le (1-t)P_t(E_\varepsilon) + \varepsilon \left[sP_s(F_\varepsilon) -
sP_s(E_\varepsilon)\right] 
+ \Lambda\Big(\big||E_\varepsilon|-\omega_N\big|-\big||F_\varepsilon|-\omega_N\big|\Big)
\\
&\le (1-t)P_t(E_\varepsilon)+ \varepsilon s P_s(F_\varepsilon\Delta E_\varepsilon)+
\Lambda\big||E_\varepsilon|-\omega_N\big| .
\end{aligned}
\]
Hence the desired result follows from~\eqref{basicestimates}.
\end{proof} 

We point out that from Lemma \ref{lemuno}
it follows that $F_\varepsilon$ is a multiplicative $\omega$-minimizer for the $t$-perimeter.
In the sequel, as customary, the fractional perimeter of a set~$E$ in a ball~$B(x,R)$ will be denoted
by~$P_t(E,B(x,R))$.

\begin{corollary}\label{correg}
Let~$\eps_0$ and~$\Lambda_0$ be as in Lemma \ref{equiv}.
Let $F_\varepsilon$ be a minimizer of \eqref{Feps} with $\varepsilon<\varepsilon_0$, let $x\in\partial^m
F_\varepsilon$, 
and let $E_\varepsilon$ be a set of finite $t$-perimeter with 
\begin{equation}\label{Con}
F_\varepsilon\Delta E_\varepsilon\subset B(x,R).\end{equation}
There holds
\begin{equation}\label{stimaomega}
P_t(F_\varepsilon,B(x,R))\le \frac{1+c_0R^{t-s}}{1-c_0R^{t-s}}\, P_t(E_\varepsilon,B(x,R))
\end{equation}
for some $c_0>0$ and for any $R<R_0=R_0(N,\delta_0)$. 
\end{corollary}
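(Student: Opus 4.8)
\emph{Proof proposal.}
The plan is to localize inside $B(x,R)$ the multiplicative‑$\omega$ minimality already recorded in Lemma~\ref{lemuno}, and then convert the additive error it produces into the multiplicative one of \eqref{stimaomega} by dividing by a lower bound for the $t$‑perimeter of $F_\varepsilon$ at the point $x$. Fix $\Lambda:=\Lambda_0(1+\varepsilon_0)$; by Lemma~\ref{equiv} we may regard $F_\varepsilon$ as a minimizer of $\G_{\varepsilon,\Lambda}$ in \eqref{Geps}. Taking $R_0$ so small that $\omega_N R_0^N<1/\Lambda$, the hypothesis \eqref{Con} gives $\big||E_\varepsilon|-\omega_N\big|=\big||E_\varepsilon|-|F_\varepsilon|\big|\le|F_\varepsilon\,\Delta\,E_\varepsilon|\le\omega_N R^N<1/\Lambda$, so Lemma~\ref{lemuno} applies to the pair $F_\varepsilon,E_\varepsilon$. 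Since $F_\varepsilon$ and $E_\varepsilon$ coincide outside $B(x,R)$, the identity $P_t(G)=P_t(G,B(x,R))+\int_{G\setminus B(x,R)}\int_{\R^N\setminus(G\cup B(x,R))}\frac{dz\,dy}{|z-y|^{N+t}}$ (a consequence of \eqref{67-0}) shows that the last integral is the same for $G=F_\varepsilon$ and $G=E_\varepsilon$, hence $P_t(F_\varepsilon)-P_t(E_\varepsilon)=P_t(F_\varepsilon,B(x,R))-P_t(E_\varepsilon,B(x,R))$, and \eqref{stimamin} localizes to
\[
(1-t)\big[P_t(F_\varepsilon,B(x,R))-P_t(E_\varepsilon,B(x,R))\big]\le \varepsilon\, c_N\Big(1-\tfrac st\Big)^{-1}|F_\varepsilon\,\Delta\,E_\varepsilon|^{1-\frac st}\big[(1-t)P_t(F_\varepsilon\,\Delta\,E_\varepsilon)\big]^{\frac st}+\Lambda\big||E_\varepsilon|-\omega_N\big|.
\]
We may assume $P_t(E_\varepsilon,B(x,R))\le P_t(F_\varepsilon,B(x,R))$, since otherwise \eqref{stimaomega} holds trivially (the factor there being $\ge1$). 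Writing $D:=F_\varepsilon\,\Delta\,E_\varepsilon\subset B(x,R)$ and $\chi_D=|\chi_{F_\varepsilon}-\chi_{E_\varepsilon}|$, the pointwise bound $|\chi_D(z)-\chi_D(y)|\le|\chi_{F_\varepsilon}(z)-\chi_{F_\varepsilon}(y)|+|\chi_{E_\varepsilon}(z)-\chi_{E_\varepsilon}(y)|$, together with $\chi_D\equiv0$ off $B(x,R)$, gives $P_t(D)\le P_t(F_\varepsilon,B(x,R))+P_t(E_\varepsilon,B(x,R))+P_t(B(x,R))$; so, with $A:=(1-t)P_t(F_\varepsilon,B(x,R))$, one has $(1-t)P_t(D)\le 2A+c_0 R^{N-t}$.

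The decisive point is the lower bound $A\ge c_1 R^{N-t}$ for $R<R_0$. Granting it, $(1-t)P_t(D)\le(2+c_0/c_1)A=c_0 A$, and inserting this bound, together with $|D|\le\omega_N R^N$, $\big||E_\varepsilon|-\omega_N\big|\le\omega_N R^N$, $\varepsilon<\varepsilon_0$, $(1-\tfrac st)^{-1}\le1/\delta_0$ and $\Lambda=\Lambda_0(1+\varepsilon_0)$, into the displayed inequality, we obtain
\[
A-(1-t)P_t(E_\varepsilon,B(x,R))\le c_0\,R^{N(1-\frac st)}A^{\frac st}+c_0\,R^{N}.
\]
Dividing by $A$ and using $A\ge c_1 R^{N-t}$ to bound $A^{\frac st-1}=A^{-(1-\frac st)}\le c_0\,R^{-(N-t)(1-\frac st)}$ and $R^N/A\le c_0 R^t\le c_0 R^{t-s}$, the exponents collapse, since $N(1-\tfrac st)-(N-t)(1-\tfrac st)=(1-\tfrac st)\,t=t-s$, and we get
\[
1-\frac{P_t(E_\varepsilon,B(x,R))}{P_t(F_\varepsilon,B(x,R))}\le c_0\,R^{t-s}.
\]
Shrinking $R_0$ further so that $c_0 R_0^{t-s}<1$, this yields $P_t(F_\varepsilon,B(x,R))\le(1-c_0 R^{t-s})^{-1}P_t(E_\varepsilon,B(x,R))\le\frac{1+c_0 R^{t-s}}{1-c_0 R^{t-s}}\,P_t(E_\varepsilon,B(x,R))$, which is \eqref{stimaomega}.

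It remains to establish $A\ge c_1 R^{N-t}$ for $x\in\partial^m F_\varepsilon$, and this is the one genuinely delicate step. By the local fractional isoperimetric inequality (Lemma~\ref{lemfracisoploc}, rescaled to $B(x,R)$, where its constant is scale invariant and, after restoring the factor $1-t$, uniform for $t\ge\delta_0$), applied --- in view of $P_t(F_\varepsilon,B(x,R))=P_t(F_\varepsilon^c,B(x,R))$, which follows from \eqref{67-0} --- to whichever of $F_\varepsilon$, $F_\varepsilon^c$ occupies less than half the volume of $B(x,R)$, one has $A\ge c_1\min\big(|F_\varepsilon\cap B(x,R)|,\,|B(x,R)\setminus F_\varepsilon|\big)^{\frac{N-t}{N}}$. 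The interior density $|F_\varepsilon\cap B(x,r)|\ge c_0 r^N$ is Proposition~\ref{densityestimate}, transported to $F_\varepsilon$ through the scaling \eqref{scale}; the exterior (porosity) density $|B(x,r)\setminus F_\varepsilon|\ge c_0 r^N$ has to be proved along the same lines, comparing $F_\varepsilon$ with $F_\varepsilon\cup B(x,\rho)$, absorbing the volume change into the penalization term $\Lambda\big||E|-\omega_N\big|$ of $\G_{\varepsilon,\Lambda}$, and running the integro‑differential argument of Lemma~\ref{Aux} on $\nu(\rho):=|B(x,\rho)\setminus F_\varepsilon|$. I expect this exterior estimate to be the main obstacle, the interior density being insufficient by itself: a priori $F_\varepsilon$ could fill $B(x,r)$ up to a very thin slit, which would make $P_t(F_\varepsilon,B(x,r))\ll r^{N-t}$. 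Finally, the case $t=1$ is entirely analogous, with $N\omega_N P$ in place of $(1-t)P_t$ and the classical relative isoperimetric inequality and density estimates replacing their fractional counterparts.
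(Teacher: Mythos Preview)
Your route is workable in principle but takes a significant detour, and the ``main obstacle'' you single out --- the exterior density estimate $|B(x,r)\setminus F_\varepsilon|\ge c_0 r^N$ --- is simply not needed. The paper's argument never uses a lower bound on $A=(1-t)P_t(F_\varepsilon,B(x,R))$ at all; in fact the hypothesis $x\in\partial^m F_\varepsilon$ plays no role in the proof.

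The key move you miss is to apply the fractional isoperimetric inequality \eqref{iso_ineq2} \emph{to the error term itself}, converting it from sublinear to linear in $P_t(D)$. Writing $|D|^{1-\frac st}=|D|^{\frac{t-s}{N}}\cdot\big(|D|^{\frac{N-t}{N}}\big)^{\frac{t-s}{t}}$ and using $|D|^{\frac{N-t}{N}}\le c_N\,t\,(1-t)P_t(D)$ gives
\[
|D|^{1-\frac st}\big[(1-t)P_t(D)\big]^{\frac st}\ \le\ [c_N t]^{1-\frac st}\,|D|^{\frac{t-s}{N}}\,(1-t)P_t(D),
\]
and similarly $\Lambda_0\big||E_\varepsilon|-\omega_N\big|\le\Lambda_0|D|^{\frac{N-t}{N}}|D|^{\frac tN}\le c_N\Lambda_0\,t\,R^t\,(1-t)P_t(D)$. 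Both errors are now \emph{linear} in $(1-t)P_t(D)$; combining with $|D|^{\frac{t-s}{N}}\le c\,R^{t-s}$ and the sharper bound $P_t(D)\le P_t(F_\varepsilon,B(x,R))+P_t(E_\varepsilon,B(x,R))$ (the extra $P_t(B(x,R))$ you add is not needed), one arrives at
\[
(1-t)P_t(F_\varepsilon,B(x,R))\le (1-t)P_t(E_\varepsilon,B(x,R))+c_0 R^{t-s}(1-t)\big[P_t(F_\varepsilon,B(x,R))+P_t(E_\varepsilon,B(x,R))\big],
\]
from which \eqref{stimaomega} follows by pure rearrangement.

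By contrast, your approach --- dividing by $A$ and bounding $A^{\frac st-1}$ via $A\ge c_1 R^{N-t}$ --- forces you to supply the exterior density (only sketched), and also to check that the constant in Lemma~\ref{lemfracisoploc} remains controlled after renormalizing by $(1-t)$ as $t\to 1^-$, which the paper does not address. The linearization trick above sidesteps all of this.
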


\begin{proof} 
We observe that, by direct calculations, from~\eqref{Con}, follows
\begin{equation}\label{A1}
\begin{split}
P_t(F_\varepsilon)-P_t(E_\varepsilon)
=P_t(F_\varepsilon,B(x,R))-P_t(E_\varepsilon,B(x,R))\\
{\mbox{and }} \qquad P_t(F_\varepsilon\Delta E_\varepsilon)\le
P_t(F_\varepsilon,B(x,R))+P_t(E_\varepsilon,B(x,R))
.\end{split}
\end{equation}
Furthermore, thanks to Lemma \ref{equiv} we know that $F_\eps$ is also a minimizer of \eqref{Geps}, with
$\Lambda=\Lambda_0$. 
From \eqref{stimamin} and the fractional isoperimetric inequality \eqref{iso_ineq2}, we then get
\begin{eqnarray}\label{stimamon}
\nonumber (1-t)P_t(F_\varepsilon,B(x,R))&\le& (1-t)P_t(E_\varepsilon,B(x,R)) \\
\nonumber && +  \varepsilon_0\, c_N\left(1-\frac st\right)^{-1}[c_N\,t]^{1-\frac st} |F_\varepsilon\Delta
E_\varepsilon|^\frac{t-s}{N} (1-t)P_t(F_\varepsilon\Delta E_\varepsilon)\\
&& +\Lambda_0\big||E_\varepsilon|-\omega_N\big|.
\end{eqnarray}
Moreover, again from the fractional isoperimetric inequality and using \eqref{Con},
\begin{eqnarray*}
\Lambda_0 \big||E_\varepsilon|-\omega_N\big|
&=&
\Lambda_0\Big|\big||E_\varepsilon|-\omega_N\big|-\big||F_\varepsilon|-\omega_N\big|\Big| 
\\
&\leq& \Lambda_0 |F_\varepsilon\Delta E_\varepsilon|^{\frac{N-t}{N}} |F_\varepsilon\Delta
E_\varepsilon|^{\frac{t}{N}}  \\
&\leq& c_N\, \Lambda_0\,t\,(1-t) P_t(F_\varepsilon\Delta E_\varepsilon)\,|F_\varepsilon\Delta
E_\varepsilon|^{\frac{t}{N}}\\
&\leq& c_N\, \Lambda_0\,t\,(1-t) P_t(F_\varepsilon\Delta E_\varepsilon)  R^t.
\end{eqnarray*}
{F}rom this, \eqref{A1} and~\eqref{stimamon}
we arrive at
\begin{eqnarray*}
(1-t)P_t(F_\varepsilon,B(x,R)) &\le& (1-t)P_t(E_\varepsilon,B(x,R)) \\ && + \varepsilon_0\, c_N\left(1-\frac
st\right)^{-1}[c_N\,t]^{1-\frac st} R^{t-s} (1-t)P_t(F_\varepsilon\Delta E_\varepsilon)\\
&& + \Lambda_0 \,c_N\, tR^t\,(1-t) P_t(F_\varepsilon\Delta E_\varepsilon) \\ &\le& (1-t)P_t(E_\varepsilon,B(x,R)) \\
&&+ c_0 R^{t-s}(1-t)\left[P_t(F_\varepsilon,B(x,R))+P_t(E_\varepsilon,B(x,R))\right]
\end{eqnarray*}
%\\\noindent
%&\le& P(E,B(0,R)(x)) + C R^{1-s}\left[P(F,B(0,R)(x))+P(E,B(0,R)(x))\right],
which gives \eqref{stimaomega}, if $R<\min\{1,1/c_0^{\frac{1}{t-s}}\}=:R_0$, with 
$$
c_0:= \varepsilon_0\, c_N\,\left(1-\frac st\right)^{-1}[c_N\,t]^{1-\frac st} +\Lambda_0\, c_N\, t.
$$
\end{proof}

\begin{lemma}\label{lemdens}
There exists $\Theta=\Theta(N,\delta_0)>0$ and $R_0=R_0(N,\delta_0)>0$ such that,
for any $x\in\partial^m F_\eps$ and $R<R_0$, there holds 
\begin{equation}\label{stimasup}
(1-t)P_t(F_\eps,B(x,R))\le \Theta\,R^{N-t}.
\end{equation}
\end{lemma}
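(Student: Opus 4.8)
\emph{Plan.} The idea is to test the almost-minimality of $F_\eps$ against the competitor obtained by erasing the ball $B(x,R)$, and to feed the result into the multiplicative $\omega$-minimality already recorded in Corollary~\ref{correg}. First I would fix $x\in\partial^m F_\eps$ and $R<R_0$, with $R_0$ as in Corollary~\ref{correg} (and possibly smaller, see below), and take as competitor $E_\eps:=F_\eps\setminus B(x,R)$. Then $F_\eps\Delta E_\eps=F_\eps\cap B(x,R)\subset B(x,R)$, so hypothesis~\eqref{Con} is satisfied, and \eqref{stimaomega} gives
\begin{equation*}
P_t(F_\eps,B(x,R))\le\frac{1+c_0R^{t-s}}{1-c_0R^{t-s}}\,P_t(E_\eps,B(x,R)).
\end{equation*}

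Next I would bound the right-hand side by a multiple of $R^{N-t}$. Since $E_\eps\cap B(x,R)=\emptyset$, in the defining formula~\eqref{67-0} for $P_t(E_\eps,B(x,R))$ the first double integral vanishes, while in the second one $B(x,R)\setminus E_\eps=B(x,R)$ and $E_\eps\setminus B(x,R)=E_\eps$; hence
\begin{align*}
P_t(E_\eps,B(x,R))&=\int_{B(x,R)}\int_{E_\eps}\frac{dx\,dy}{|x-y|^{N+t}}\\
&\le\int_{B(x,R)}\int_{\R^N\setminus B(x,R)}\frac{dx\,dy}{|x-y|^{N+t}}=P_t(B(x,R))=R^{N-t}P_t(B),
\end{align*}
by translation invariance and the scaling $P_t(\lambda E)=\lambda^{N-t}P_t(E)$. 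Combining the two displays and multiplying by $1-t$ yields
\begin{equation*}
(1-t)P_t(F_\eps,B(x,R))\le\frac{1+c_0R^{t-s}}{1-c_0R^{t-s}}\,(1-t)P_t(B)\,R^{N-t}.
\end{equation*}

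To finish, I would shrink $R_0=R_0(N,\delta_0)$ so that $c_0R_0^{t-s}\le\tfrac12$, which is possible since $c_0\le c_0(N,\delta_0)$ and $R_0^{t-s}\le R_0^{\delta_0}$; then the prefactor is at most $3$. Moreover $t\ge\delta_0$ (because $t-s\ge\delta_0$ and $s\ge0$), so the quantity $(1-t)P_t(B)$ is bounded above by a constant $c(N,\delta_0)$ — it is in fact continuous in $t$ and tends to $N\omega_NP(B)$ as $t\to1^-$ by~\eqref{s1}. Setting $\Theta:=3\,c(N,\delta_0)$ gives~\eqref{stimasup}.

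There is no serious obstacle here: the estimate is an immediate consequence of the almost-minimality established in Corollary~\ref{correg}. The only two points requiring care are (i) choosing the competitor as $F_\eps\setminus B(x,R)$, so that the symmetric difference lies inside $B(x,R)$ and the localized perimeter of $E_\eps$ only records the interaction of $B(x,R)$ with its complement, and (ii) reading off from~\eqref{67-0} that $P_t(E_\eps,B(x,R))\le P_t(B(x,R))=R^{N-t}P_t(B)$. The hypothesis $x\in\partial^m F_\eps$ plays no real role in the argument; it only makes the statement nonvacuous.
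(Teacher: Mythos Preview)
Your proof is correct and follows essentially the same route as the paper: you choose the competitor $E_\eps=F_\eps\setminus B(x,R)$, apply the multiplicative $\omega$-minimality estimate~\eqref{stimaomega} from Corollary~\ref{correg}, and then bound $P_t(E_\eps,B(x,R))\le P_t(B(x,R))=R^{N-t}P_t(B)$ via the explicit computation from~\eqref{67-0}. The paper's proof is terser (it simply asserts $P_t(E_\eps,B(x,R))\le P_t(B(x,R))$ and absorbs the fraction $\frac{1+c_0R^{t-s}}{1-c_0R^{t-s}}$ into a single factor $(1+c_0R^{t-s})$ after reducing $R_0$), but the argument is identical.
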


\begin{proof}
%We first prove the upper bound. 
Let $E_\eps=F_\eps\setminus B(x,R)$, 
and observe that $P_t(E_\eps,B(x,R))\le P_t(B(x,R))$.
From \eqref{stimaomega}, possibly reducing $R_0$, 
we then get
\[
(1-t)P_t(F_\eps,B(x,R))\le (1+c_0\,R^{t-s})(1-t)P_t(B(x,R)) \le \Theta\,R^{N-t}.
\]
\end{proof}

From Lemma \ref{lemdens} it follows that $F_\eps$ is also an additive $\omega$-minimizer for the
$t$-perimeter.
 
\begin{corollary}\label{lemadd}
Let~$\eps_0$ be as in Lemma \ref{equiv}.
Let $F_\varepsilon$ be a minimizer of \eqref{Feps} with $\varepsilon<\varepsilon_0$, let $x\in\partial^m
F_\varepsilon$, 
and let $E_\varepsilon$ be a set of finite $t$-perimeter with 
\begin{equation}
F_\varepsilon\Delta E_\varepsilon\subset B(x,R).\end{equation}
There holds
\begin{equation}\label{stimaomegaadd}
(1-t)P_t(F_\varepsilon,B(x,R))\le (1-t)P_t(E_\varepsilon,B(x,R)) + c_0\,R^{N-s}
\end{equation}
for any $R<R_0$, with~$R_0,c_0$ depending only on $N,\,\delta_0$.
\end{corollary}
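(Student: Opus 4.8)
The plan is to derive the additive $\omega$-minimality \eqref{stimaomegaadd} from the multiplicative one already established in Corollary~\ref{correg}, using the upper density bound of Lemma~\ref{lemdens} to turn the multiplicative error into an additive one. First I would dispose of the trivial case: if $(1-t)P_t(E_\eps,B(x,R))\ge (1-t)P_t(F_\eps,B(x,R))$ then \eqref{stimaomegaadd} holds with no error term at all, so it suffices to treat competitors $E_\eps$ for which $P_t(E_\eps,B(x,R))< P_t(F_\eps,B(x,R))$.

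In that remaining case, Lemma~\ref{lemdens} applied to the minimizer $F_\eps$ at the point $x\in\partial^m F_\eps$ gives
\[
(1-t)P_t(E_\eps,B(x,R))< (1-t)P_t(F_\eps,B(x,R))\le \Theta\, R^{N-t},
\]
so that the quantity sitting on the right of the multiplicative estimate is now controlled a priori by $\Theta R^{N-t}$. Next I would rewrite Corollary~\ref{correg} in the form
\[
(1-t)P_t(F_\eps,B(x,R))-(1-t)P_t(E_\eps,B(x,R))\le \frac{2c_0R^{t-s}}{1-c_0R^{t-s}}\,(1-t)P_t(E_\eps,B(x,R)),
\]
choose $R_0$ small enough that, besides ensuring the hypotheses of Corollary~\ref{correg} and Lemma~\ref{lemdens}, one also has $c_0R_0^{t-s}\le \tfrac12$, and bound $\tfrac{2c_0R^{t-s}}{1-c_0R^{t-s}}\le 4c_0R^{t-s}$ for $R<R_0$. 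Substituting the previous display then yields
\[
(1-t)P_t(F_\eps,B(x,R))-(1-t)P_t(E_\eps,B(x,R))\le 4c_0R^{t-s}\cdot\Theta R^{N-t}=4c_0\Theta\, R^{N-s},
\]
which is exactly \eqref{stimaomegaadd}, with the constant renamed to $4c_0\Theta$, a quantity depending only on $N$ and $\delta_0$.

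There is essentially no hard part here: the only point requiring a little care is the case distinction, which is precisely what allows replacing the factor $(1-t)P_t(E_\eps,B(x,R))$ appearing on the right of the multiplicative bound by the controlled quantity $\Theta R^{N-t}$ — without it, a wild competitor inside $B(x,R)$ could make that factor, and hence the right-hand side, arbitrarily large. As for all the results in this section, the thresholds and constants depend only on $N$ and $\delta_0$, so the estimate is stable as $s\to 0^+$ and $t\to 1^-$, and the same argument applies when $t=1$ upon replacing $(1-t)P_t$ by $P$ throughout.
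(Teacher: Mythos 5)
Your proof is correct and rests on exactly the same two ingredients as the paper's: the multiplicative bound of Corollary~\ref{correg} and the upper density bound of Lemma~\ref{lemdens}. The only difference is organisational. You first isolate the error $\tfrac{2c_0R^{t-s}}{1-c_0R^{t-s}}(1-t)P_t(E_\eps,B(x,R))$, which is proportional to $(1-t)P_t(E_\eps,B(x,R))$, and then need a case distinction because this quantity is controlled by $\Theta R^{N-t}$ only when the competitor is at least as good as $F_\eps$. The paper sidesteps the case split by rearranging \eqref{stimaomega} the other way around, namely $(1-t)P_t(E_\eps,B(x,R))\ge(1-c_0R^{t-s})(1-t)P_t(F_\eps,B(x,R))$ (after absorbing the denominator into a larger $c_0$), so that the error is instead $c_0R^{t-s}(1-t)P_t(F_\eps,B(x,R))$, a quantity that Lemma~\ref{lemdens} bounds by $c_0\Theta R^{N-s}$ unconditionally. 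Your version is therefore slightly longer but buys nothing extra; both arrive at the same additive $\omega$-minimality with constants depending only on $N$ and $\delta_0$.
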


\begin{proof}
By \eqref{stimaomega} and \eqref{stimasup}, possibly increasing the constant $c_0$ we have
\[
(1-t)P_t(E_\varepsilon,B(x,R))\ge \left( 1-c_0R^{t-s}\right) 
(1-t)P_t(F_\varepsilon,B(x,R))
\ge (1-t)P_t(F_\varepsilon,B(x,R)) - c_0\Theta\, R^{N-s}
\]
for any $R<R_0$.
\end{proof}

The following result can be proved exactly as in \cite[Theorem 3.4]{labandadei5}.

\begin{proposition}\label{proflat}
%Let~$\eps_0$ be as in Lemma \ref{equiv}.
Let $t_0\in (0,1)$, there exist $\tau,\delta,q \in (0,1)$, depending only on $N, t_0$, such that if $F$ is an
additive $\omega$-minimizer of $P_t$ for any $t \in [t_0, 1]$,
% 
% There exist $0<\delta_0\leq t-s$, with $0\leq s<t\leq 1$ and $\tau,\delta,q\in (0,1)$, depending only on
% $N,\delta_0$, such that  
% if $F$ is an additive $\omega$-minimizer of $P_t$ for $t\in [\delta_0,1]$, 
with 
$0\in\partial^m F$ and 
\[
\partial^m F\cap B(0,1)\subset \left\{ y\in\R^N:\ |(y-x)\cdot e|<\tau\right\},
\]
for some $e\in S^{N-1}$,
then there exists $e_0\in S^{N-1}$ such that 
\[
\partial^m F\cap B(0,\eta)\subset \left\{ y\in\R^N:\ |(y-x)\cdot e_0|<q\delta \tau\right\}.
\]
\end{proposition}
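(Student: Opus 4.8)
The plan is to follow the blow-up scheme of De~Giorgi, in the nonlocal adaptation of \cite{CRS10} and \cite{labandadei5}, arguing by contradiction. Suppose the statement fails for the given $t_0$. Then, for every choice of the constants, one finds a sequence of additive $\omega$-minimizers $F_k$ of $P_{t_k}$, with $t_k\in[t_0,1]$, such that $0\in\partial^m F_k$, $\partial^m F_k\cap B(0,1)$ lies in a strip $\{|(y-x_k)\cdot e_k|<\tau_k\}$ with $\tau_k\to0$, but the conclusion fails at the scale $\eta$ for every unit vector. Up to subsequences, $e_k\to e_\infty$ (after a rotation, $e_\infty=e_N$) and $t_k\to t_\infty\in[t_0,1]$. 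Since additive $\omega$-minimizers satisfy uniform density estimates and uniform energy bounds — with constants depending only on $N$ and $t_0$, analogously to Proposition~\ref{densityestimate} and Lemma~\ref{lemdens} — the boundaries $\partial^m F_k$ cannot collapse, and converge in the local Hausdorff sense in $B(0,1/2)$ to a subset of the hyperplane $\{y_N=0\}$, while $F_k\to\{y_N<0\}$ in $L^1_{loc}$.

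The next step is to show that, after the anisotropic dilation $(y',y_N)\mapsto(y',y_N/\tau_k)$, the rescaled boundaries are graphs $y_N=u_k(y')$ over a fixed ball $B'\subset\R^{N-1}$ with $\|u_k\|_\infty\le 1$, and $u_k\to u$ locally uniformly, where $u$ solves the linearized equation associated to $P_{t_k}$: for $t_\infty<1$ this is the fractional equation $(-\Delta)^{\frac{1+t_\infty}{2}}u=0$ on $\R^{N-1}$, and for $t_\infty=1$ it degenerates to $\Delta u=0$. The point is that the bilinear forms $(1-t)P_t$ and the associated seminorms are bounded above and below in terms of $t_0$ only (cf.\ \eqref{brA1}, \eqref{fracisop}, \eqref{armonic}, Proposition~\ref{prola}), so all estimates are uniform in $t_k\in[t_0,1]$; passing the additive $\omega$-minimality of $F_k$ to the limit, exactly as in \cite[Theorem 3.4]{labandadei5}, yields that $u$ is a weak solution of this linear equation in $B'$ with $u(0)=0$.

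By interior regularity for the (fractional or classical) Laplace equation, $u\in C^{1,\a}_{loc}(B')$ with estimates depending only on $N,t_0$, so there is a linear function $\ell(y')=\nabla u(0)\cdot y'$ with $|u(y')-\ell(y')|\le C_0|y'|^{1+\a}$ on $B'_{1/2}$, $C_0=C_0(N,t_0)$. Then fix $\eta=\eta(N,t_0)$ so small that $C_0\,\eta^{\a}<\frac{1}{4}$, and choose $\delta,q\in(0,1)$ with $q\delta=\frac{1}{2}$. Undoing the rescaling and using the uniform convergence $u_k\to u$ together with the density estimates, one gets that for $k$ large $\partial^m F_k\cap B(0,\eta)$ is contained in a strip of width $q\delta\tau_k$ around the hyperplane orthogonal to a suitable $e_0\in S^{N-1}$ (the direction of the tilted tangent plane of $u$ at $0$), close to $e_k$. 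This contradicts the failure of the conclusion for $F_k$, and proves the proposition.

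The main obstacle is the second step: making the compactness and, above all, the passage to the limit in the almost-minimality condition \emph{uniform} in $t_k\in[t_0,1]$, and correctly identifying the limiting linear equation, in particular handling the degeneration $t_k\to 1$, where the nonlocal operator converges to the classical Laplacian (compare the remark after \eqref{H} and \cite{BBM01}). A further ingredient that needs care is the Lipschitz-graph approximation — showing that small flatness forces $\partial^m F_k$ to be trapped between two Lipschitz graphs with uniformly small slope, so that the rescaled sets are genuine graphs — which again relies on the density estimates and the additive $\omega$-minimality and is entirely parallel to \cite{labandadei5} and \cite{CRS10}.
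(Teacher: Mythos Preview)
Your proposal is correct and follows exactly the approach the paper takes: the paper does not give an independent proof of Proposition~\ref{proflat} but simply states that it ``can be proved exactly as in \cite[Theorem 3.4]{labandadei5}'', which is precisely the blow-up/linearization scheme you outline. Your sketch is faithful to that reference, including the correct identification of the main technical point (uniformity of the estimates in $t\in[t_0,1]$ and the passage to the limiting linear equation, classical or fractional), so there is nothing to add.
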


From Corollary \ref{lemadd} and Proposition \ref{proflat} we derive the $C^{1,\beta}$ regularity 
minimizer of \eqref{Feps} following standard arguments that can be found in \cite[Theorem 1]{CG11} 
(see also \cite[Corollary 3.5]{labandadei5}).

\begin{corollary}\label{corregbis}
There exists $\beta=\beta(N,\delta_0)<1$ such that any minimizer
$F_\eps$ of \eqref{Feps}, with $\eps<\eps_0$, as in Lemma \ref{equiv}, has boundary of class $C^{1,\beta}$
outside of a closed singular set of Hausdorff dimension at most $N-2$.
\end{corollary}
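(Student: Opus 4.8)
The plan is to obtain Corollary~\ref{corregbis} from the regularity theory for (almost-)minimizers of the fractional perimeter, essentially as in \cite[Theorem~1]{CG11} and \cite[Corollary~3.5]{labandadei5}; the only genuine point to verify is that the volume-penalized problem~\eqref{Feps} fits into that framework with all constants depending only on $N$ and $\delta_0$, so that the conclusion is stable as $s\to0^+$ and $t\to1^-$.

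First I would record that $F_\eps$ is a good additive $\omega$-minimizer of $P_t$. By Corollary~\ref{lemadd}, for every $x\in\partial^m F_\eps$, every competitor $E_\eps$ with $F_\eps\Delta E_\eps\subset B(x,R)$, and every $R<R_0$, one has
\[
(1-t)P_t(F_\eps,B(x,R))\le (1-t)P_t(E_\eps,B(x,R))+c_0\,R^{N-s}.
\]
Since $t-s\ge\delta_0>0$, the error $R^{N-s}$ is of strictly higher order than the natural scaling $R^{N-t}$ of the $t$-perimeter, and since $s\ge0$ forces $t\ge\delta_0$, the constants $R_0,c_0$ depend only on $N,\delta_0$. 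Combining this with the lower density estimate of Proposition~\ref{densityestimate} and the upper bound of Lemma~\ref{lemdens}, a suitably normalized representative of $F_\eps$ satisfies $\partial F_\eps=\partial^m F_\eps$ with locally finite $\mathcal{H}^{N-1}$ measure, and $F_\eps$ lies in the class of almost-minimizers to which Proposition~\ref{proflat} applies with $t_0=\delta_0$.

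Then I would run the standard improvement-of-flatness iteration. At any point of the reduced boundary $\partial^*F_\eps$ (where a blow-up is a halfspace) the flatness is arbitrarily small at some scale, so one iterates Proposition~\ref{proflat} on dyadic balls: each step contracts the flatness by the fixed factor $q\delta<1$ while the $\omega$-minimality error decays like $R^{t-s}$, so the iteration closes and yields a H\"older modulus $R\mapsto R^\beta$ for the measure-theoretic normal, with $\beta=\beta(N,\delta_0)\in(0,1)$; hence $\partial F_\eps$ is a $C^{1,\beta}$ hypersurface near every point of $\partial^*F_\eps$, exactly as in \cite[Theorem~1]{CG11} (see also \cite[Corollary~3.5]{labandadei5}). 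Finally, for the singular set $\Sigma:=\partial^m F_\eps\setminus\partial^*F_\eps$, one notes that blow-ups of $F_\eps$ are minimizing cones for the pure $t$-perimeter (the penalization disappears in the limit, again because $N-s>N-t$), and a Federer dimension-reduction argument, together with the fact that in $\mathbb{R}^1$ and $\mathbb{R}^2$ the only $t$-minimal cones are halfspaces \cite{CRS10}, gives $\dim_{\mathcal{H}}\Sigma\le N-2$.

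The main obstacle is entirely contained in the first point: one must be sure that the volume penalization is truly a harmless higher-order perturbation with constants uniform as $s\to0^+$ and $t\to1^-$, and this is precisely what Lemma~\ref{equiv}, Lemma~\ref{lemuno} and Corollary~\ref{lemadd} are designed to provide. Once this is granted, the remaining two steps are the by-now classical regularity theory for almost-minimizers of fractional perimeters, applied verbatim. (The case $t=1$ needed in Theorem~\ref{thexist} is not covered by this corollary: it is treated separately via the classical De Giorgi--Federer theory, which yields the improved bound $N-8$ on the singular set.)
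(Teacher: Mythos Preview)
Your proposal is correct and follows exactly the route the paper indicates: invoke the additive $\omega$-minimality from Corollary~\ref{lemadd}, feed it into the improvement-of-flatness Proposition~\ref{proflat}, and then run the standard iteration and dimension-reduction argument of \cite[Theorem~1]{CG11} and \cite[Corollary~3.5]{labandadei5}. The paper's own proof is in fact just a one-line reference to these same ingredients, so you have simply spelled out what that reference entails.
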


\begin{remark}\label{remreg}\rm
If $t=1$, by the general regularity theory for $\omega$-minimizers
of the classical perimeter developed in \cite{bombieri, tamanini} 
we have that $F_\eps$ has boundary of class $C^{1,\beta}$ outside of a closed singular set of Hausdorff dimension
at most $N-8$.
\end{remark}

We are in the position of completing the
proof of Theorem~\ref{thexist}.\medskip

\noindent{\it Proof of Theorem \ref{thexist}.}
The existence follows from Theorem~\ref{PRE-thexist}.
The regularity of $\partial F$ follows from Corollary \ref{correg} and
Corollary \ref{corregbis}. \qed

\section{Rigidity of minimizers for small volumes}\label{section5}

We now develop the rigidity theory needed to prove Theorem~\ref{thmain}.

\begin{theorem}\label{teoregolarita}
%Let~$\eps_0$ and~$\Lambda_0$ be as in Lemma \ref{equiv}.
For any $\eta>0$ there exists $\bar\eps=\bar\eps(\eta,N,\delta_0)>0$
such that any minimizer $F_\eps$ of \eqref{Feps},
with $\eps<\bar\eps$, can be written as 
 \begin{equation}\label{eqgraph}
  \partial F_\eps=\{(1+u_\eps(x))x:x\in\partial B\},
 \end{equation}
where $B$ is the ball of radius 1
having the same barycenter of $F_\eps$,
and $u_\eps:\partial B\to \R$ satisfies
\[
\|u_\eps\|_{C^1(\partial B)}\le \eta.
\]
\end{theorem}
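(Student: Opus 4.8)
The plan is to argue by contradiction, relying on the fact that $\mathcal F^{\,\eps}_{s,t}\to(1-t)P_t$ as $\eps\to 0$ and that the ball is the unique volume-constrained minimizer of $(1-t)P_t$: this produces $L^1$-convergence of the minimizers to a ball, which is then upgraded to $C^1$-convergence by means of the uniform density bounds and the $\omega$-minimality established in Sections~\ref{section2} and~\ref{section4}.

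\textbf{Step 1 ($L^1$-closeness).} By translation invariance of \eqref{Feps} we may always take the reference ball to be $B=B(0,1)$, after translating $F_\eps$ so that its barycenter is the origin. Fix $\eta>0$ and suppose, for contradiction, that there are $\eps_k\downarrow 0$ and minimizers $F_k:=F_{\eps_k}$ of \eqref{Feps}, with $|F_k|=\omega_N$ and barycenter at $0$, for which \eqref{eqgraph} fails or holds only with $\|u_{\eps_k}\|_{C^1(\partial B)}>\eta$. By minimality, $\mathcal F^{\,\eps_k}_{s,t}(F_k)\le \mathcal F^{\,\eps_k}_{s,t}(B)$; rescaling via Lemma~\ref{lem0} to a minimizer of $\phi(m_k)$ with $m_k=\omega_N\eps_k^{N/(t-s)}\to 0$ and applying the deficit estimate of Lemma~\ref{deficit}, one finds translates $F_k-\sigma_k$ with $|(F_k-\sigma_k)\,\Delta\,B|\to 0$. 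Since, by the density bounds of Proposition~\ref{densityestimate}, the sets $F_k-\sigma_k$ are uniformly bounded and have volume $\omega_N$, while $\mathrm{bar}(F_k)=0$, it follows that $\sigma_k\to 0$, and hence $|F_k\,\Delta\,B|\to 0$.

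\textbf{Step 2 ($C^1$-closeness).} Each $F_k$, being a minimizer of \eqref{Feps} with $\eps_k<\eps_0$, is an additive $\omega$-minimizer of the $t$-perimeter with modulus $c_0R^{N-s}$ (Corollary~\ref{lemadd}) and obeys the lower density estimate of Proposition~\ref{densityestimate}, all constants depending only on $N$ and $\delta_0$. Combining these with $|F_k\,\Delta\,B|\to0$ one obtains, in a standard way, that $\partial F_k\to\partial B$ in the Hausdorff distance, so that for $k$ large the spherical excess of $F_k$ at an arbitrary boundary point, at a fixed scale, is as small as desired; the flatness-improvement iteration of Proposition~\ref{proflat} then yields uniform $C^{1,\beta}$ estimates at every point of $\partial F_k$, which together with the Hausdorff convergence give $\partial F_k\to\partial B$ in $C^{1,\beta'}$ for every $\beta'<\beta$. (If $t=1$, the same conclusion holds with the classical $\omega$-minimality regularity theory in place of Proposition~\ref{proflat}, cf.\ Remark~\ref{remreg}.)

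\textbf{Step 3 (conclusion).} For $k$ large, $\partial F_k$ is therefore a normal graph $\{(1+u_{\eps_k}(x))x : x\in\partial B\}$ with $\|u_{\eps_k}\|_{C^1(\partial B)}\to 0$; since $B$ is the ball of radius $1$ with the same barycenter (the origin) as $F_k$, this is precisely the representation \eqref{eqgraph}, and $\|u_{\eps_k}\|_{C^1(\partial B)}\le\eta$ for all large $k$, contradicting the choice of the $F_k$. \textbf{The main obstacle} is Step 2: upgrading $L^1$- to $C^1$-closeness \emph{uniformly} along the sequence. This succeeds because, thanks to the normalizations fixed in Section~\ref{section2}, the density constants, the additive $\omega$-minimality modulus of Corollary~\ref{lemadd}, and the constants in Proposition~\ref{proflat} are all independent of $\eps_k$, so the regularity machinery of \cite{CG11} (respectively \cite{bombieri,tamanini} when $t=1$) can be run with fixed constants on every $F_k$; in particular the excess being uniformly small rules out a singular set in the limit.
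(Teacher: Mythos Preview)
Your proof is correct and follows essentially the same route as the paper's: Lemma~\ref{deficit} for $L^1$-closeness to a ball, the density estimate of Proposition~\ref{densityestimate} to upgrade this to Hausdorff convergence of the boundaries, and then the uniform $\omega$-minimality (Corollary~\ref{lemadd}, Proposition~\ref{proflat}) to obtain $C^1$-convergence to $\partial B$. Your argument is in fact more detailed than the paper's, which simply cites these ingredients; in particular you make explicit the contradiction framework and the reason why the translation $\sigma_k$ coming from Lemma~\ref{deficit} must tend to zero so that the limiting ball is the one centered at the barycenter.
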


\begin{proof}
From Lemma \ref{deficit}, putting $m=\varepsilon^{\frac{N}{t-s}}\omega_N$ there, it follows that $|F_\eps\Delta B|\to 0$ as $\eps\to 0$.
From the density lower bound proved in Proposition \ref{densityestimate} it then follows that 
$\partial F_\eps\to \partial B$ in the Hausdorff topology. 
The result now follows via a standard argument based on the $\omega$-minimality of $F_\eps$
and on the regularity of the limit set $B$ (see \cite[Corollary 3.6]{labandadei5} and, for $t=1$, 
\cite[Theorem 1]{tamanini} and \cite[Theorem 26.6]{M12}).
\end{proof}

\begin{theorem}\label{teodelta}
There exist $\tau_0,c_1,c_2>0$ depending only on $N$, with $c_1<c_2$, 
with the following property.
Suppose that $E_\tau$ is such that, for $\tau\in[0,\tau_0]$,
$\partial E_\tau$ takes the form
 \[
  \partial E_\tau=\{(1+\tau u(x))x:x\in\partial B\},
 \]
where $u:\partial B\to \R$ satisfies
\[
 \|u\|_{C^1(\partial B)}\le 1/2.
\]
Suppose moreover that the barycenter of $E_\tau$ is the same of that of $B$, 
say $0$, and that $|E_\tau|=|B|$. Then, for all $\alpha\in (0,1)$ 
it holds true that
\begin{equation}\label{claimrego}
c_1\, \tau^2 \left([u]^2_{H^{\frac{1+\a}{2}}(\partial B)}
+\a P_\a(B)\|u\|^2_{L^2(\partial B)}\right)
\,\le\,
P_\a(E_\tau)-P_\a(B)\,\le\, c_2\, \tau^2 [u]^2_{H^{\frac{1+\a}{2}}(\partial B)}.
\end{equation}
\end{theorem}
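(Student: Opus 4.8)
The plan is to perform a second-order, Fuglede-type expansion of $P_\alpha$ about the ball, following the scheme of \cite{labandadei5}, while keeping explicit track of the dependence on $\alpha$ so that $\tau_0,c_1,c_2$ can be chosen to depend only on $N$.

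I would start from the bilinear decomposition $P_\alpha(E_\tau)-P_\alpha(B)=2\langle\chi_B,\chi_{E_\tau}-\chi_B\rangle+[\chi_{E_\tau}-\chi_B]^2_{H^{\alpha/2}}$, where $\langle f,g\rangle:=\int_{\R^N}\int_{\R^N}\frac{(f(x)-f(y))(g(x)-g(y))}{|x-y|^{N+\alpha}}\,dx\,dy$. The first term is the ``first variation'' contribution: since $\chi_{E_\tau}-\chi_B$ is supported in the thin shell $\{x:\mathrm{dist}(x,\partial B)\lesssim\tau\|u\|_\infty\}$ and $\partial B$ has constant fractional mean curvature, this term can be expanded in $\tau$, and, after invoking the volume constraint $|E_\tau|=|B|$ (which in polar coordinates reads $\int_{\partial B}[(1+\tau u)^N-1]\,d\H^{N-1}=0$, so that $\bigl|\int_{\partial B}u\,d\H^{N-1}\bigr|\le C_N\tau\|u\|^2_{L^2(\partial B)}$), reduces to a quantity of order $\tau^2$ that gets incorporated into the genuine second-order term. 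The term $[\chi_{E_\tau}-\chi_B]^2_{H^{\alpha/2}}$ is already of order $\tau^2$.

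Passing to polar coordinates $x=\rho\sigma$, $\sigma\in\partial B$, and Taylor-expanding in $\tau$, one isolates a quadratic form $\mathcal Q_\alpha[u]$; after expanding $u$ in spherical harmonics $u=\sum_{k,i}a_k^i(u)Y_k^i$ and using the identity \eqref{armonic}, $\mathcal Q_\alpha[u]$ is comparable (with dimensional constants) to $[u]^2_{H^{(1+\alpha)/2}(\partial B)}$ on the subspace where the degree-$0$ and degree-$1$ coefficients vanish, and is of lower order on the complementary, constrained directions: indeed the volume constraint bounds $a_0(u)$, and the barycenter condition $\int_{E_\tau}x\,dx=0$ gives $|a_1^i(u)|\le C_N\tau\|u\|^2_{L^2(\partial B)}$. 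The upper bound in \eqref{claimrego} is then immediate. For the lower bound I would use Proposition~\ref{prola} together with a spectral gap estimate of the form $\lambda_k^\alpha-\lambda_1^\alpha\ge c_N(\lambda_k^\alpha+\alpha P_\alpha(B))$ for $k\ge2$ (of the type proved in \cite{labandadei5}) and the relation $\alpha P_\alpha(B)\le c_N\lambda_1^\alpha$, to deduce $\mathcal Q_\alpha[u]\ge c_N\bigl([u]^2_{H^{(1+\alpha)/2}(\partial B)}+\alpha P_\alpha(B)\|u\|^2_{L^2(\partial B)}\bigr)$ modulo the $a_0$- and $a_1$-contributions, which are $O(\tau^2\|u\|^4_{L^2(\partial B)})$ and hence absorbable once $\tau_0$ and $\|u\|_{C^1(\partial B)}$ are small.

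The main obstacle is the \emph{uniform} control of the cubic and higher-order remainder: after the Taylor expansion one is left with integrals of differences of $u$ against the kernel $|\cdot|^{-(N+\alpha)}$ weighted by powers of $\tau u$, which must be bounded by $C\tau^3\|u\|_{C^1(\partial B)}[u]^2_{H^{(1+\alpha)/2}(\partial B)}$ with $C$ depending only on $N$. This requires carefully matching the homogeneity of the kernel with the $H^{(1+\alpha)/2}$-seminorm — the technical heart of \cite{labandadei5} — while the stability of the estimates as $\alpha\to1^-$ follows from a Bourgain--Brezis--Mironescu type limit \cite{BBM01}. Once this remainder bound is in hand, choosing $\tau_0=\tau_0(N)$ small lets the remainder be absorbed into the coercive quadratic term, which yields \eqref{claimrego}.
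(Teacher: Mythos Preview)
Your outline is sound in spirit --- it is essentially a recapitulation of the full Fuglede-type analysis carried out in \cite{labandadei5} --- but it is considerably more elaborate than what the paper actually does, and the two approaches differ in an instructive way.

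The paper does not reprove the lower bound at all: the first inequality in \eqref{claimrego} is simply quoted from \cite[Theorem~2.1]{labandadei5}. Only the upper bound is new here, and for that the paper avoids both the bilinear decomposition $2\langle\chi_B,\chi_{E_\tau}-\chi_B\rangle+[\chi_{E_\tau}-\chi_B]^2_{H^{\alpha/2}}$ and the spherical-harmonic/spectral machinery you invoke. Instead it starts directly from the polar-coordinate identity of \cite[Formula~(2.20)]{labandadei5},
\[
P_\alpha(E_\tau)=\frac{\tau^2}{2}\,g(\tau)+\frac{P_\alpha(B)}{P(B)}\,h(\tau),
\]
with $g$ a double spherical integral against the explicit kernel $f_\theta(a,b)=a^{N-1}b^{N-1}(|a-b|^2+ab\theta^2)^{-(N+\alpha)/2}$ and $h(\tau)=\int_{\partial B}(1+\tau u)^{N-\alpha}\,d\H^{N-1}$. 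A crude pointwise bound $f_\theta(a,b)\le C_N\theta^{-(N+\alpha)}$ (valid because $a,b\in[1-\tau,1+\tau]$) immediately gives $g(\tau)\le c_N[u]^2_{H^{(1+\alpha)/2}(\partial B)}$. The remaining task is to show $h(\tau)-h(0)\le 0$, which follows from a short Taylor expansion of $(1+\tau u)^{N-\alpha}$ combined with the \emph{volume constraint alone} --- the barycenter condition is never used for the upper bound. So your plan to control the $a_1^i(u)$-modes via the barycenter, and to appeal to Proposition~\ref{prola} and a spectral gap, is unnecessary for this direction. What your route buys is a self-contained argument for both inequalities at once; what the paper's route buys is brevity, by outsourcing the delicate lower bound and dispatching the upper bound with a two-line kernel estimate plus an elementary sign computation.
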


\begin{proof}
The first inequality in \eqref{claimrego} has been proved
in \cite[Theorem 2.1]{labandadei5}. It remains to prove the 
second inequality.

As in \cite[Formula (2.20)]{labandadei5},
after some calculations we get that
\begin{equation}\label{one}
 P_\a(E_\tau)=\frac{ \tau^2}{2}g(\tau)+\frac{P_\a(B)}{P(B)}h(\tau),
\end{equation}
where we set
\[
 h(\tau):=\int_{\partial B}(1+\tau u(x))^{N-\a}\,d\H^{N-1}(x),
\]
\[
{\mbox{and }} \qquad g(\tau):=\int_{\partial
B}\int_{\partial
B}\left(\int_{u(y)}^{u(x)}\int_{u(y)}^{u(x)}f_{|x-y|}(1+\tau r,1+\tau\rho)
\,dr\,d\rho\right)\,d\H^{N-1}(x)\,d\H^{N-1}(y),
\]
being
\begin{equation}\label{df}
 f_\theta(a,b):=\frac{a^{N-1}b^{N-1}}{(|a-b|^2+ab\theta^2)^{\frac{N+\a}{2}}}.
\end{equation}
We observe that $r$ and~$\rho$ in the definition of~$g$
range in~$[-\|u\|_{L^\infty(\partial B)},\,\|u\|_{L^\infty(\partial B)}]
\subseteq [-1,1]$, since~$\|u\|_{L^\infty(\partial B)}\le1$.
Hence, comparing with the definition of~$g$,
we notice that~$a$ and~$b$ in~\eqref{df}
range in~$[1-\tau,
1+\tau]$, and therefore they are bounded
and bounded away from zero. As a consequence, we get
$$ f_\theta(a,b)\le \frac{C_1}{(C_2+C_3\theta^2)^{\frac{N+\a}{2}}}
\le \frac{C_1}{(C_3\theta^2)^{\frac{N+\a}{2}}}=\frac{C_4}{\theta^{N+\a}},$$
for suitable constants~$C_1,\dots,C_4>0$.
Therefore, up to renaming the constants, we have
$$ g(\tau)\le 
\int_{\partial
B}\int_{\partial
B}\left(\int_{u(y)}^{u(x)}\int_{u(y)}^{u(x)}
\frac{c_N}{|x-y|^{N+\a}}
\,dr\,d\rho\right)\,d\H^{N-1}(x)\,d\H^{N-1}(y)=
c_N\,[u]^2_{H^{\frac{1+\a}{2}}(\partial B)}\,.
$$
Thus, since $h(0)=P(B)$, by \eqref{one} we get
\begin{equation}\label{two}
 P_\a(E_\tau)-P_\a(B)\le c_N\, \tau^2[u]^2_{H^{\frac{1+\a}{2}}(\partial B)}
+\frac{P_\a(B)}{P(B)}(h(\tau)-h(0)).
\end{equation}
Now we want to estimate $h(\tau)-h(0)$. 
Since $|E_\tau|=|B|$, using polar coordinates,
we get 
\begin{equation}\label{VC}
 \int_{\partial B}(1+\tau u)^N\,d\H^{N-1}=N|E_\tau|=N|B|=P(B).
\end{equation}
Thus
\begin{equation}\label{twobis}
 h(\tau)-h(0)=
 \int_{\partial B}(1+\tau u)^{N-\a}\,d\H^{N-1}-P(B)=
\int_{\partial B}(1+\tau u)^N((1+\tau u)^{-\a}-1)\,d\H^{N-1}.
\end{equation}
By a Taylor expansion, we know that for any 
$x\ge0$ small enough, it holds 
\begin{eqnarray*}
 &&\left((1+x)^{-\a}-1)(1+x)^N\right)
\\ &&\qquad=\left(-\a x+\frac{\a(\a+1)}{2}x^2+\a 
\beta(x)\right)\left(1+Nx+\frac{N(N-1)}{2}
x^2+\gamma(x)\right) ,
\end{eqnarray*}
with $|\beta(x)|+|\gamma(x)|\le c_N x^3$, so that
 \[
  \left((1+x)^{-\a}-1)(1+x)^N\right)
\le-\a x+\left(\frac{\a(\a+1)}{2}-N\a\right)x^2+\a\, c_N\,x^3.
 \]
By applying such an inequality to \eqref{twobis}, 
and using the fact that $\|u\|_{L^\infty(\partial B)}<1$, we get
\begin{equation}\label{VC2}
 h(\tau)-h(0)\le-\a\int_{\partial B}\left[\tau u+\left(N-\frac{\a+1}{2}\right)
\tau^2u^2\right]\,d\H^{N-1}
+\a \,c_N\,\tau ^3\|u\|_{L^2(\partial B)}^2.
\end{equation}
Also, from~\eqref{VC}, we have
$$
0=\int_{\partial B}((1+\tau u)^N-1)\,d\H^{N-1}\le
\int_{\partial B}(N\tau u+N(N-1)\tau^2u^2+c_N\,\tau^3u^3)\,d\H^{N-1}.
$$
Hence, since $\|u\|_{L^\infty(\partial B)}< 1$, 
we obtain
\[
-\int_{\partial B}\tau u\,d\H^{N-1}\le \frac{N-1}{2}\, \tau^2\|u\|^2_{L^2(\partial B)}
%\int_{\partial B} t^2u^2\,d\H^{N-1}
+c_N\,\tau^3\|u\|^2_{L^2(\partial B)}\,,
\]
so that~\eqref{VC2} gives 
\[
 h(\tau)-h(0)\le -\frac{\tau^2}{2}\a\,(N-\a)\|u\|^2_{L^2(\partial B)}
%\int_{\partial B} u^2\,d\H^{N-1}
+\a \, c_N\,\tau^3\|u\|^2_{L^2(\partial B)} \le0
\]
for $\tau\le\tau_0(N)$. 
By inserting this into \eqref{two} we obtain
the second inequality in~\eqref{claimrego}.
\end{proof}

\smallskip

We now complete the proof of Theorem \ref{thmain}.
\medskip

\noindent{\it Proof of Theorem \ref{thmain}.} 
We have to show that there exists $\eps_1=\eps_1(N,\delta_0)\in (0,\eps_0]$, $\eps_0$ as in \eqref{e0}, and so $\bar m_1=\bar m_1(N,\delta_0)\in (0,\bar m_0]$, such that the ball $B$ is the only minimizer of 
problem \eqref{Feps} for $\eps<\eps_1$.
Let $\eps<\eps_1$ and let $F_\eps$ be a minimum of problem \eqref{Feps}, which exists by Theorem \ref{thexist}.
By the minimality of $F_\eps$ we have
\begin{equation}\label{claim}
(1-t)P_t(F_\eps)-(1-t)P_t(B)\le \eps \left(sP_s(F_\eps)-sP_s(B)\right)
\end{equation}
where $B$ has the same barycenter of $F_\eps$.
Possibly reducing $\eps$ we can assume that $\partial F_\eps$ can be written as in \eqref{eqgraph},
with $\|u_\eps\|_{C^1(\partial B)}\le \tau_0/2$, where $\tau_0$ is as in Theorem \ref{teodelta}. Then, from
\eqref{claim} and \eqref{claimrego} it follows 
\begin{eqnarray}\label{lippa}
\nonumber c_1 (1-t)[u_\eps]^2_{H^{\frac{1+t}{2}}(\partial B)}&\le& c_1
(1-t)\left([u_\eps]^2_{H^{\frac{1+t}{2}}(\partial B)}+tP_t(B)\|u_\eps\|^2_{L^2(\partial B)}\right)
\\
\nonumber &\le& \big((1-t)P_t(F_\eps)-(1-t)P_t(B)\big)
\\
\nonumber &\le& \eps \left(sP_s(F_\eps)-sP_s(B)\right)
\\
&\le& \eps\,s\,c_2 
\,[u_\eps]^2_{H^{\frac{1+s}{2}}(\partial B)}.
%\left( [u_\eps]^2_{H^{\frac{1+s}{2}}(\partial B)}+ C\,\|u_\eps\|^2_{L^2(\partial B)}\right).
\end{eqnarray}
From \eqref{H} it then follows 
\[
c_1 (1-t)[u_\eps]^2_{H^{\frac{1+t}{2}}(\partial B)}\leq 
c_N  \frac{\varepsilon s}{(1-s)}\,(1-t)[u_\eps]^2_{H^{\frac{1+t}{2}}(\partial B)}
\]
which implies $u_\eps=0$, that is $F_\eps=B$, whenever $\eps$ is sufficiently small.~\qed

We conclude the section with the following counterpart to Theorem \ref{thmain}.

\begin{theorem}\label{prounstable}
For all $0<s<t\le 1$,
there exists a volume $\bar m_2=\bar m_2(N,s,t)\ge \bar m_1$ such that, 
for $m>m_2$, the ball is not a local minimizer of problem \eqref{problem}.
\end{theorem}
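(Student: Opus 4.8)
\smallskip
\noindent\emph{Sketch of a proof of Theorem~\ref{prounstable}.}
The plan is to destabilise the ball $B_m$ by a suitable volume-- and barycenter--preserving normal perturbation whose energy is strictly smaller than $\F(B_m)$. The mechanism is a scaling mismatch: under a dilation by $R$ the aggregating term $(1-t)P_t$ (or $N\omega_N P$ when $t=1$) scales like $R^{N-t}$, whereas the competing term $sP_s$ scales like $R^{N-s}$, so that the second variation turns negative once $R$, and hence $m=\omega_N R^N$, is large. The main tool is the two--sided expansion of the (fractional) perimeter near a ball, Theorem~\ref{teodelta}, applied \emph{simultaneously} for the exponents $t$ and $s$.

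Fix a non-constant spherical harmonic $u\in\S_2$, normalised so that $\|u\|_{C^1(\partial B)}\le\tfrac12$. Since $u$ is $L^2(\partial B)$--orthogonal to $\S_0\oplus\S_1$, a routine implicit--function argument (a constant correction fixing the volume, a translation of order $\tau^2$ fixing the barycenter) produces, for all small $\tau>0$, sets
\[
E_\tau=\bigl\{(1+\tau u_\tau(x))\,x:\ x\in\partial B\bigr\},\qquad
u_\tau\longrightarrow u\ \text{in }C^1(\partial B),\qquad \|u_\tau\|_{C^1(\partial B)}\le\tfrac12,
\]
with $|E_\tau|=|B|$ and barycenter at the origin, so that Theorem~\ref{teodelta} is applicable to each $E_\tau$. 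Setting $R:=(m/\omega_N)^{1/N}$ and $\widetilde E_\tau:=R\,E_\tau$, one has $|\widetilde E_\tau|=m$, barycenter of $\widetilde E_\tau$ equal to $0$, and $\widetilde E_\tau\to B_m$ in $C^1$ as $\tau\to0^+$. By the homogeneity $P_\a(\lambda A)=\lambda^{N-\a}P_\a(A)$ (and $P(\lambda A)=\lambda^{N-1}P(A)$),
\[
\F(\widetilde E_\tau)-\F(B_m)
= R^{N-t}\Bigl\{(1-t)\bigl[P_t(E_\tau)-P_t(B)\bigr]
- R^{\,t-s}\,s\,\bigl[P_s(E_\tau)-P_s(B)\bigr]\Bigr\},
\]
with $(1-t)P_t$ replaced by $N\omega_N P$ when $t=1$.

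Now feed in Theorem~\ref{teodelta}: the upper bound in \eqref{claimrego} with $\a=t$ gives $(1-t)[P_t(E_\tau)-P_t(B)]\le (1-t)\,c_2\,\tau^2\,[u_\tau]^2_{H^{\frac{1+t}{2}}(\partial B)}$, while its lower bound with $\a=s$ gives $s[P_s(E_\tau)-P_s(B)]\ge s\,c_1\,\tau^2\,[u_\tau]^2_{H^{\frac{1+s}{2}}(\partial B)}>0$. Thus the factor $\tau^2$ drops out of the brace, whose sign is governed by whether $R^{\,t-s}$ exceeds
\[
\frac{(1-t)\,c_2}{s\,c_1}\cdot
\frac{[u_\tau]^2_{H^{\frac{1+t}{2}}(\partial B)}}{[u_\tau]^2_{H^{\frac{1+s}{2}}(\partial B)}}\ .
\]
Since $u_\tau\to u\in\S_2$ in $C^1(\partial B)$, by \eqref{armonic} the ratio above converges to $\lambda_2^t/\lambda_2^s$, which is positive and finite by Proposition~\ref{prola}; hence it stays below a constant $C_*=C_*(N,s,t)$ for all sufficiently small $\tau$. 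Therefore, putting $\bar m_2:=\max\!\bigl(\bar m_1,\ \omega_N\,C_*^{\,N/(t-s)}\bigr)$, for every $m>\bar m_2$ we have $(m/\omega_N)^{(t-s)/N}>C_*$, so $\F(\widetilde E_\tau)<\F(B_m)$ for all $\tau>0$ small enough; since $\widetilde E_\tau\to B_m$, the ball $B_m$ is not a local minimizer of \eqref{problem}.

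The case $t=1$ is handled identically after replacing the expansion of $P_t$ by that of the classical perimeter, $N\omega_N\bigl(P(E_\tau)-P(B)\bigr)\le c_2\,\tau^2[u_\tau]^2_{H^1(\partial B)}$, which follows from Fuglede's inequality or by letting $t\to1^-$ in \eqref{claimrego} (recall $(1-t)P_t\to N\omega_N P$ and $(1-t)[\cdot]^2_{H^{\frac{1+t}{2}}(\partial B)}\to[\cdot]^2_{H^1(\partial B)}$); the lower bound on $P_s(E_\tau)-P_s(B)$ is unchanged. I expect the only point requiring real care to be the construction in the second step: producing a family $E_\tau$ that meets \emph{exactly} the volume and barycenter hypotheses of Theorem~\ref{teodelta} while keeping $u_\tau$ convergent to the fixed harmonic $u$, so that the seminorm ratio stays bounded uniformly in $\tau$; everything downstream is the scaling identity together with the already available quadratic estimate \eqref{claimrego}. (That one may take $\bar m_2\ge\bar m_1$ is moreover automatic, since by Theorem~\ref{thmain} the ball is the unique global --- hence local --- minimizer for $m<\bar m_1$, so the threshold $\omega_N C_*^{N/(t-s)}$ cannot lie below $\bar m_1$; the $\max$ merely makes the statement independent of this remark.)
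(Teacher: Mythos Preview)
Your proposal is correct and follows essentially the same route as the paper: apply the upper bound in \eqref{claimrego} with $\alpha=t$ and the lower bound with $\alpha=s$, then let the scaling factor $R^{t-s}$ (equivalently, the paper's $\eps=(m/\omega_N)^{(t-s)/N}$) dominate the fixed ratio of seminorms. The paper works in the rescaled functional $\Feps$ and simply picks one fixed competitor $F_\eps$ written as a graph over $\partial B$, so the seminorm ratio is a single constant and no limiting argument in $\tau$ is needed; you instead build a family $E_\tau$ with $u_\tau\to u\in\S_2$ and pass to the limit in the ratio. Your version is in fact more careful about the point the paper glosses over, namely producing a competitor that meets the exact volume and barycenter hypotheses of Theorem~\ref{teodelta}, but conceptually the two arguments coincide.
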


\begin{proof}
We have to show that there exists $\eps_2\ge\eps_1$
such that the ball $B$ is not a local minimizer of problem \ref{Feps} 
for $\eps>\eps_2$. 
We look for a competitor $F_\eps\neq B$ which can be written as in \eqref{eqgraph},
with $u\not\equiv 0$ and 
and $\|u\|_{C^1(\partial B)}\le \tau_0/2$, where $\tau_0$ 
is as in Theorem \ref{teodelta}. As above, 
from \eqref{claimrego} it follows 
\begin{eqnarray}\label{loppa}
\nonumber \big((1-t)P_t(F_\eps)-(1-t)P_t(B)\big) &\le&
c_2 (1-t)[u]^2_{H^{\frac{1+t}{2}}(\partial B)}
\\
&<& \eps c_1 s [u]^2_{H^{\frac{1+s}{2}}(\partial B)}
\\
\nonumber &\le& \eps \left(sP_s(F_\eps)-sP_s(B)\right),
\end{eqnarray}
as soon as 
\[
\eps > \eps_2 := \frac{c_2 (1-t)[u]^2_{H^{\frac{1+t}{2}}(\partial B)}}
{c_1 s[u]^2_{H^{\frac{1+s}{2}}(\partial B)}}\,.
\]
This shows that $F_\eps$ has lower energy than $B$, so that the ball 
cannot be a local minimizer of problem \eqref{problem}.
\end{proof}

Notice that $\lim_{s\to 0}\bar m_2(N,s,t)=+\infty$ for all $t\in (0,1]$,
which is consistent with the fact that the ball is the unique 
minimizer of the $t$-perimeter, with volume constraint.

%%%%%%%%%%%%%%%%%%%%%%%%%%%%%%%%%%%%%%%%%%%%%%%%%%%%%%%%%%%%%%%%%%%%%%%%%%%%%%
%%%%%%%%%%%%%%%%%%%%%%%%%%%%%%%%%%%%%%%%%%%%%%%%%%%%%%%%%%%%%%%%%%%%%%%%%%%%%%
\section{A fractional isoperimetric problem}\label{section6}%%%%%%%%%%%%%%%%%%%%%%%%%%%%%%%%%%
%%%%%%%%%%%%%%%%%%%%%%%%%%%%%%%%%%%%%%%%%%%%%%%%%%%%%%%%%%%%%%%%%%%%%%%%%%%%%%
%%%%%%%%%%%%%%%%%%%%%%%%%%%%%%%%%%%%%%%%%%%%%%%%%%%%%%%%%%%%%%%%%%%%%%%%%%%%%%
\noindent
We recall from the Introduction the definition of the functional $\widetilde\F$ given by
\[
\widetilde\F(E)=
\begin{cases}
\frac{\left((1-t)P_t(E)\right)^{N-s}}{\left(sP_s(E)\right)^{N-t}} & \text{if $0<s<t<1$}\\
 \quad& \quad\\
\frac{(N\omega_{N}P(E))^{N-s}}{\left(sP_s(E)\right)^{N-1}} & \text{if $0<s<t=1$}\\
\quad& \quad\\
\frac{(1-t)P_t(E)^{N}}{(N\omega_{N}|E|)^{N-t}} & \text{if $0=s<t<1$}\\
\quad &\quad\\
N\omega_{N}\frac{P(E)^{N}}{|E|^{N-1}}&\text{if $s=0$ and $t=1$}.
\end{cases}
\]
In this section we consider the generalized isoperimetric problem 
\begin{equation}\label{isoper}
\min_{E\subset\R^N}\,\widetilde\F(E),
\qquad 0\le s<t\le 1\,.
\end{equation}
\begin{remark}\rm
Notice that the quantity in \eqref{isoper} is scale invariant,
hence without loss of generality we can look for minimizers 
$E$ satisfying a volume constraint $|E|=\omega_N$.
\end{remark}

%\begin{remark}\label{limiti}\rm
%\noindent
%In \cite{ADM11} and \cite{DFPV} (see also \cite{Maz}) it has been proved, respectively, that the functional
%$tP_t(\cdot)$ converges, in the $L^1_{\rm loc}-\Gamma$ sense, to $\omega_{N-1}P(\cdot)$ as
%$t\to0$, and to the Lebesgue measure $\omega_{N-1}|\cdot|$ as $t\to0$. Thus
%\eqref{isoper} may be seen as a continuous extension (in the sense of $L^1_{{\rm loc}}-\Gamma$
%convergence) of the classical isoperimetric problem (and the fractional one, see \cite{FS}).
% \[
% \mathcal F_{t,s}(E)=\left\{\begin{array}{rcll}
% \frac{\left(t(1-t)P_t(E)\right)^{N-s}}{\left(s(1-s)P_s(E)\right)^{N-t}} & \text{if $0<s<t<1$}\\
% \frac{P(E)^{N-s}}{\left(s(1-s)P_s(E)\right)^{N-1}} & \text{if $0<s<t=1$}\\
% \frac{t(1-t)P_t(E)^{N}}{|E|^{N-t}} & \text{if $0=s<t<1$}\\
% \frac{P(E)^{N}}{|E|^{N-1}}&\text{if $s=0$ and $t=1$}.
% \end{array}
% \right.
% \]
%\end{remark}

% For any $E\subset\R^N$ and $0< s<t< 1$ there holds
% \begin{equation}\label{basicestimates}
% sP_s(E)\le C(N)
% \left( 1-\frac st\right)^{-1}t^{-\frac s t}|E|^{1-\frac s t} [t(1-t)P_t(E)]^\frac s t.
% \end{equation}

The main aim of this section is  the following existence theorem.

\begin{theorem}\label{mainisop}
There exists a minimizer of problem \eqref{isoper}.
\end{theorem}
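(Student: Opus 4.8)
The plan is to use the Direct Method, exploiting the scale invariance to normalize the volume constraint and then reducing the problem, essentially, to the situation already analyzed for problem~\eqref{problem}. First I would fix the normalization $|E|=\omega_N$, which is legitimate by the preceding remark, and take a minimizing sequence $\{E_k\}$ for $\widetilde\F$ with $|E_k|=\omega_N$. Since $\widetilde\F(E_k)$ is bounded, in the generic case $0<s<t<1$ the ratio $\big((1-t)P_t(E_k)\big)^{N-s}/\big(sP_s(E_k)\big)^{N-t}$ is bounded; combined with the interpolation estimate of Lemma~\ref{int_est} (which bounds $P_s$ in terms of a power of $P_t$ times a power of the volume), this forces $(1-t)P_t(E_k)$ to be bounded above, and bounded \emph{below} away from zero by the fractional isoperimetric inequality~\eqref{fracisop}. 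Hence both $P_t(E_k)$ and $P_s(E_k)$ lie in a fixed compact interval of $(0,+\infty)$.

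Next I would address compactness, which is the place where the lack of a perimeter-penalization term (as opposed to the ratio) must be handled. The key point is that minimizing sequences for $\widetilde\F$ can be replaced by minimizing sequences that are equibounded in $\R^N$: given the uniform bound on $P_t(E_k)$ and the volume normalization, one runs the same truncation/concentration argument as in the proof of Theorem~\ref{PRE-thexist} — either the set already sits inside a fixed ball (via the integro-differential inequality of Lemma~\ref{ID}, whose hypotheses follow from the density-type estimate of Lemma~\ref{Aux}), or one splits off a small piece and, using a dilation as in Proposition~\ref{nonoptimality}, strictly decreases the \emph{scale-invariant} quotient $\widetilde\F$; note that $\widetilde\F$ is monotone under the same operations because it is built from the very same quantities $P_t$, $P_s$, $|E|$. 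After this reduction, the sets live in a common large ball, so by the compact embedding of $H^{t/2}$ into $H^{s/2}$ (used exactly as in Theorem~\ref{PRE-thexist}) we extract a subsequence converging in $W^{s,1}$, hence in $L^1$, to a limit set $E_{s,t}$ with $|E_{s,t}|=\omega_N$; along this subsequence $P_s(E_k)\to P_s(E_{s,t})$ and, by lower semicontinuity, $\liminf_k P_t(E_k)\ge P_t(E_{s,t})$.

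Finally I would pass to the limit in $\widetilde\F$. Writing $\widetilde\F$ as an increasing function of $(1-t)P_t(E)$ and a decreasing function of $sP_s(E)$ on the relevant range, the $L^1$-convergence of the denominator together with the lower semicontinuity of the numerator gives $\widetilde\F(E_{s,t})\le\liminf_k\widetilde\F(E_k)$, so $E_{s,t}$ is a minimizer; it is nontrivial since $P_s(E_{s,t})>0$ by the lower perimeter bound, so in particular $E_{s,t}$ is not negligible and not (up to null sets) the whole space. The boundary cases $0=s<t\le 1$ reduce to the classical or fractional isoperimetric problems, for which existence of the ball is known, and the case $0<s<t=1$ is handled with the same argument after replacing the compact embedding of $H^{t/2}$ by that of $BV$ into $H^{s/2}$. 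The main obstacle, as usual in these nonlocal problems, is the compactness step: ruling out that mass escapes to infinity along the minimizing sequence, which is precisely what the truncation-and-dilation machinery of Section~\ref{section3}, transported to the scale-invariant functional, is designed to do.
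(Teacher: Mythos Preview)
Your overall plan---normalize the volume, bound $P_t$ and $P_s$ along a minimizing sequence, localize, and pass to the limit using lower semicontinuity of $P_t$ and strong convergence of $P_s$---is reasonable, and the first and last steps work exactly as you say. The gap is in the middle: the claim that the truncation/dilation machinery of Section~\ref{section3} ``transports'' to the scale-invariant functional $\widetilde\F$ is not justified, and in fact that machinery is tied to the \emph{small-volume} regime. The dichotomy in the proof of Theorem~\ref{PRE-thexist} handles case~\eqref{case 2} via Lemma~\ref{deficit}, which explicitly requires $m<m_2(N,\delta_0)$; and Proposition~\ref{nonoptimality} is formulated for $\F$, its proof comparing with a ball of volume $m$ and invoking $\F(F_2)\ge\phi(|F_2|)>0$, which again uses that $|F_2|$ is small \emph{and} that the aggregating term dominates. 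For $\widetilde\F$ with volume normalized to $\omega_N$ there is no such smallness, and removing a small piece $F_2$ from $F$ need not decrease the ratio: by \eqref{eqsum} both $P_t$ and $P_s$ shift by a difference of the same form, and the effect on a quotient of powers is not a priori monotone. So you have not actually ruled out loss of mass at infinity.

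The paper's proof takes a different route, via a concentration-compactness argument. One first proves existence of minimizers $E_R$ inside large cubes $Q_R$ (Lemma~\ref{lemBR}), with a uniform bound on $(1-t)P_t(E_R)$. Then, along $R=n\to\infty$, one covers $Q_n$ by unit cubes, tracks the masses $x_{i,n}=|E_n\cap Q_{i,n}|$, and uses the local isoperimetric inequality~\eqref{fracisoploc} together with the perimeter bound to control $\sum_i x_{i,n}^{(N-t)/N}$; this gives a uniform tail estimate~\eqref{estx}. After a diagonal extraction, the translates $E_n-z_{i,n}$ converge in $L^1_{\rm loc}$ to limit sets $G_i$ (one per equivalence class of cubes at bounded mutual distance), with $\sum_{[i]}|G_i|=\tfrac12$. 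One proves $\sum_{[i]}P_t(G_i)\le\liminf P_t(E_n)$ and, crucially, $\sum_{[i]}P_s(G_i)\ge\limsup P_s(E_n)$, the latter via a careful splitting and the integro-differential Lemma~\ref{ID}. Finally the elementary inequality $(\sum c_i)^\alpha\le\sum c_i^\alpha$ for $\alpha\in(0,1)$ shows that \emph{some} single piece $G_j$ realizes the infimum of the ratio. This argument accommodates the possibility that the sequence genuinely splits into several drifting pieces, which your localization-to-a-ball strategy does not.
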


\noindent To prove it, we need some preliminary results,
namely a suitable version of the isoperimetric inequality
(Lemma~\ref{propiso1}) and an existence result with uniform
estimates for
a constrained minimization problem (Lemma~\ref{lemBR}).

\begin{remark}\label{notazione}\rm
In what follows, with a slight abuse of notation,
we extend the functionals $(1-t)P_t(\cdot)$ and $sP_s(\cdot)$
to $t=1$ and $s=0$ respectively, meaning
that for $t=1$ it equals $N\omega_{N}P(\cdot)$,
while for $s=0$ it equals $N\omega_{N}|\cdot|$. 
\end{remark}

\begin{lemma}\label{propiso1}
Let $s<t\in [0,1]$ satisfy \eqref{deltazero}. For any $E\subset\R^N$ there holds
\begin{equation}\label{N.A1}
\frac{\left((1-t)P_t(E)\right)^\frac{N-s}{N-t}}{\left(sP_s(E)\right)}\geq c
\end{equation}
for some $c=c(N,\delta_0)>0$.
\end{lemma}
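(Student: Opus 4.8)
The plan is to derive~\eqref{N.A1} from the interpolation inequality of Lemma~\ref{int_est} together with the fractional isoperimetric inequality~\eqref{iso_ineq2}, after normalizing the volume of $E$ by scale invariance. First I would observe that the left-hand side of~\eqref{N.A1} is invariant under the dilations $E\mapsto\lambda E$, $\lambda>0$: indeed both $\big((1-t)P_t(E)\big)^{\frac{N-s}{N-t}}$ and $sP_s(E)$ scale like $\lambda^{N-s}$. Hence we may assume $|E|=\omega_N$, and also $0<P_t(E)<+\infty$ (otherwise the quotient in~\eqref{N.A1} is to be read as $+\infty$ and there is nothing to prove). Multiplying~\eqref{basicestimates} by $s$ and using that $|E|^{1-s/t}=\omega_N^{1-s/t}$ is bounded above by a constant depending only on $N$, while $\big(1-\tfrac st\big)^{-1}=\tfrac{t}{t-s}\le\delta_0^{-1}$, I would obtain
\[
sP_s(E)\ \le\ c(N,\delta_0)\,\big((1-t)P_t(E)\big)^{\frac st}.
\]

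Plugging this into the left-hand side of~\eqref{N.A1} yields
\[
\frac{\big((1-t)P_t(E)\big)^{\frac{N-s}{N-t}}}{sP_s(E)}\ \ge\ \frac{1}{c(N,\delta_0)}\,\big((1-t)P_t(E)\big)^{p},\qquad p:=\frac{N-s}{N-t}-\frac st=\frac{N(t-s)}{t(N-t)},
\]
the last identity being an elementary computation that in particular shows $p>0$. It then remains to bound $(1-t)P_t(E)$ from below and to control the exponent $p$. The fractional isoperimetric inequality~\eqref{iso_ineq2}, together with $|E|=\omega_N$ and $t\le1$, gives $(1-t)P_t(E)\ge a(N)$ for some $a(N)>0$ depending only on $N$ (here one uses that $\omega_N^{(N-t)/N}$ is bounded below uniformly in $t$).

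The point that needs care is the uniform control of $p$: since $s\ge0$ and $t-s\ge\delta_0$ force $t\in[\delta_0,1]$, one has $\delta_0\le p\le\frac{N}{\delta_0(N-1)}$, so that $p$ stays in a compact subinterval of $(0,+\infty)$ depending only on $N$ and $\delta_0$. Consequently $\big((1-t)P_t(E)\big)^{p}\ge\min\big\{1,\,a(N)^{N/(\delta_0(N-1))}\big\}>0$, and~\eqref{N.A1} follows with a constant $c=c(N,\delta_0)$. Finally, the boundary cases $t=1$ and/or $s=0$, in the sense of Remark~\ref{notazione}, are handled by the same argument, using~\eqref{basicestimate} and the classical isoperimetric inequality $|E|^{(N-1)/N}\le c_N P(E)$ in place of Lemma~\ref{int_est} and~\eqref{iso_ineq2}; when $s=0$ the interpolation step is not even needed, the bound being immediate from the isoperimetric inequality and $|E|=\omega_N$. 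Apart from these bookkeeping points the argument is routine, and I do not foresee a genuine obstacle.
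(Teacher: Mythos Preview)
Your argument is correct and uses the same two ingredients as the paper's proof—the interpolation estimate~\eqref{basicestimates} and the fractional isoperimetric inequality—but organizes them slightly differently. The paper does not invoke scale invariance; instead it substitutes the isoperimetric bound $|E|^{1-s/t}\le C(N,\delta_0)\big((1-t)P_t(E)\big)^{\frac{N(t-s)}{t(N-t)}}$ directly into~\eqref{basicestimates}, obtaining in one stroke
\[
sP_s(E)\le C(N,\delta_0)\,\big((1-t)P_t(E)\big)^{\frac{N-s}{N-t}},
\]
which is~\eqref{N.A1}. Your route—normalize $|E|=\omega_N$, get the exponent $s/t$ from~\eqref{basicestimates}, then correct by bounding $\big((1-t)P_t(E)\big)^p$ below via the isoperimetric inequality and controlling $p$ uniformly—is a legitimate rearrangement of the same computation, just slightly longer because you postpone the isoperimetric input and must then track the range of $p$. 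Either way the constants depend only on $N$ and $\delta_0$, using $t\ge\delta_0$ and $\tfrac{t}{t-s}\le1+\delta_0^{-1}$.
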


\begin{proof}
Let $s<t\in [0,1]$ and let $\delta_0=t-s$.
Notice that
\begin{equation}\label{N.A2}
\frac{t}{t-s}=1+\frac{s}{t-s}\le1+\frac{1}{\delta_0}.
\end{equation} 
Then from \eqref{fracisop}, and since $\delta_0<t$, it follows
\[
|E|^{1-\frac{s}{t}}\le C(N,\delta_0) \left((1-t)P_t(E)\right)^\frac{N(t-s)}{(N-t)t}.
\]
Plugging this estimate into \eqref{basicestimates} (or \eqref{basicestimate} if $t=1$) we get
\[
sP_s(E)\ \le\ C(N,\delta_0)\,
\frac{t}{t-s}\, 
\left((1-t)P_t(E)\right)^\frac{N-s}{N-t}\,,
\]
which, together with~\eqref{N.A2} gives~\eqref{N.A1}.
\end{proof}

\noindent We notice that,
if $s=0$, the claim is an immediate  consequence of the the fractional isoperimetric inequality \eqref{fracisop}.

\begin{lemma}\label{lemBR}
Let $s<t\in [0,1]$ satisfy \eqref{deltazero}. 
For $R>1$ let $Q_R=[-R,R]^N$. Then, there exists a minimizer
$E_R$ of the problem
\begin{equation}\label{isoperBR}
\min_{E\subset Q_R\,|E|=m}\,\frac{((1-t)P_t(E))^{N-s}}{(sP_s(E))^{N-t}}\,.
\end{equation}
Moreover
\begin{equation}\label{conBR}
(1-t)P_t(E_R)\le C
%\min_{E\subset Q_R\,|E|=1}\,\frac{P_t(E)^{N-s}}{P_s(E)^{N-t}}\,.
\end{equation}
where $C$ is independent of $R$.
\end{lemma}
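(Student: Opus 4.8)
The plan is to apply the Direct Method of the Calculus of Variations on the fixed bounded domain $Q_R$; the only genuinely delicate point is that the bound \eqref{conBR} on the $t$-perimeter must be made independent of $R$. I would treat the case $0<s<t<1$ in detail, the cases $t=1$ and $s=0$ being analogous. After rescaling we may assume $m=\omega_N$, so that the unit ball $B$ is admissible for every $R>1$; this gives the upper bound $\mathfrak m_R:=\inf\{\widetilde\F(E):E\subset Q_R,\ |E|=m\}\le\widetilde\F(B)=:C_0$ with $C_0$ independent of $R$. On the other hand, Lemma~\ref{propiso1} gives, for every admissible $E$,
\begin{equation*}
\widetilde\F(E)=\left(\frac{\left((1-t)P_t(E)\right)^{\frac{N-s}{N-t}}}{sP_s(E)}\right)^{N-t}\ge c^{N-t}>0,
\end{equation*}
so the infimum is bounded away from zero and no degeneration can occur.

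Next I would record an a priori bound. For any admissible $E$ with $\widetilde\F(E)\le C_0$, the interpolation inequality \eqref{basicestimates} yields $sP_s(E)\le K\,\big((1-t)P_t(E)\big)^{s/t}$ with $K=K(N,\delta_0,m)$ independent of $R$; inserting this into $\widetilde\F(E)\le C_0$ gives $\big((1-t)P_t(E)\big)^{(N-s)-(N-t)\frac st}\le C_0K^{N-t}$. Since $(N-s)-(N-t)\frac st=\frac{N(t-s)}{t}>0$, this forces
\begin{equation}\label{aprioriLBR}
(1-t)P_t(E)\le \big(C_0K^{N-t}\big)^{\frac{t}{N(t-s)}}=:C,\qquad sP_s(E)\le K\,C^{s/t},
\end{equation}
with $C$ independent of $R$.

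With this in hand I would run the Direct Method. Take a minimizing sequence $E_k\subset Q_R$, $|E_k|=m$, with $\widetilde\F(E_k)\to\mathfrak m_R$; for $k$ large $\widetilde\F(E_k)\le C_0+1$, so by \eqref{aprioriLBR} both $(1-t)P_t(E_k)$ and $sP_s(E_k)$ are bounded uniformly in $k$. Hence $\{\chi_{E_k}\}$ is bounded in $H^{t/2}(\R^N)$ with supports contained in the fixed set $Q_R$; by the compact embedding $H^{t/2}\hookrightarrow H^{s/2}$ (see \cite[Section 7]{DPV12}) a subsequence converges in $H^{s/2}$, hence in $L^1$ and a.e., to $\chi_{E_R}$ for some $E_R\subset Q_R$ with $|E_R|=m$. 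Strong $H^{s/2}$-convergence gives $sP_s(E_k)\to sP_s(E_R)$, which is positive since $E_R$ is a proper subset of $Q_R$, while Fatou's lemma gives $(1-t)P_t(E_R)\le\liminf_k(1-t)P_t(E_k)$. Passing to a further subsequence with $(1-t)P_t(E_k)\to\ell$ we obtain
\begin{equation*}
\widetilde\F(E_R)=\frac{\big((1-t)P_t(E_R)\big)^{N-s}}{\big(sP_s(E_R)\big)^{N-t}}\le\frac{\ell^{N-s}}{\big(sP_s(E_R)\big)^{N-t}}=\lim_k\frac{\big((1-t)P_t(E_k)\big)^{N-s}}{\big(sP_s(E_k)\big)^{N-t}}=\mathfrak m_R,
\end{equation*}
so $E_R$ is a minimizer of \eqref{isoperBR}. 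Since $\widetilde\F(E_R)=\mathfrak m_R\le C_0$, estimate \eqref{aprioriLBR} applies to $E_R$ and yields \eqref{conBR} with $C$ independent of $R$.

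For $t=1$ the same scheme works, replacing $(1-t)P_t$ by $N\omega_NP$, \eqref{basicestimates} by \eqref{basicestimate}, the compactness $H^{t/2}\hookrightarrow H^{s/2}$ by $BV(Q_R)\hookrightarrow L^1(Q_R)$ together with $BV\hookrightarrow H^{s/2}$, and the lower semicontinuity of $P_t$ by that of the perimeter; the a priori exponent becomes $N(1-s)>0$. For $s=0$ the functional, restricted to sets of volume $m$ in $Q_R$, is (up to the fixed factor $(N\omega_Nm)^{N-t}$) just $(1-t)P_t$, so \eqref{isoperBR} reduces to the fractional isoperimetric problem in $Q_R$, solved directly, and $(1-t)P_t(E_R)\le(1-t)P_t(B_m)$ gives \eqref{conBR}. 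The main obstacle is precisely the $R$-independence of \eqref{conBR}: it is what forces the choice of the ball — whose energy does not depend on $R$ — as reference competitor, combined with the interpolation inequality \eqref{basicestimates}; the compactness and semicontinuity steps are otherwise routine.
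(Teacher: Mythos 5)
Your proof is correct and follows essentially the same strategy as the paper's: derive an $R$-independent a priori bound on $(1-t)P_t$ by combining an upper bound on the infimum with the interpolation inequality \eqref{basicestimates}, then run the Direct Method on the fixed bounded domain using the compact embedding $H^{t/2}\hookrightarrow H^{s/2}$ (or $BV\hookrightarrow L^1$ when $t=1$) and lower semicontinuity of $P_t$. The only genuine divergence is in how the $R$-independent upper bound on the infimum is produced: the paper observes that $R\mapsto C(R):=\inf_{E\subset Q_R,\,|E|=m}\frac{((1-t)P_t(E))^{(N-s)/(N-t)}}{sP_s(E)}$ is non-increasing (the admissible class grows with $R$) and compares to $C(1)$, whereas you rescale to $m=\omega_N$ and test directly with the unit ball. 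Both routes require an admissible competitor independent of $R$, so they are equivalent in substance; yours sidesteps introducing $C(R)$ at the cost of a scaling step (and tacitly needs $m\le\omega_N$ for $B\subset Q_1$, just as the paper tacitly needs the $R=1$ problem to be non-void — harmless in the only application, where $m=1/2$). One small cosmetic slip: you bound $\widetilde\F(E_k)\le C_0+1$ for $k$ large but then invoke \eqref{aprioriLBR}, which was stated under $\widetilde\F(E)\le C_0$; just restate the a priori bound with $C_0+1$ in place of $C_0$, or take $\limsup$ first.
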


\begin{proof}
%% If $s=0$ the claim is equivalent to state the validity of the (fractional) 
%% constrained isoperimetric problem. Thus we concentrate on the case $s>0$. 
We recall that, thanks to the notation introduced in Remark
\ref{notazione} we can deal at once with the cases $t<1$ and $t=1$. 
By Lemma \ref{propiso1} we know that
\[
 C(R)=\inf_{E\subset
Q_R\,|E|=m}\,\frac{\left((1-t)P_t(E)\right)^\frac{N-s}{N-t}}{\left(sP_s(E)\right)}
\]
is a strictly positive quantity. Clearly the map $R\mapsto C(R)$ is 
non-increasing. Let $C=C(1)+1$ and
let $E_n$ be a minimizing sequence for \eqref{isoperBR}, 
so that for $n$ big enough it holds $(1-t)P_t(E_n)\le C (sP_s(E_n))^{(N-t)/(N-s)}$.
Possibly increasing the constant $C$,
from \eqref{basicestimates} (or \eqref{basicestimate} if $t=1$) it follows 
\[
(1-t)P_t(E_n)\le C ((1-t)P_t(E_n))^\frac{s(N-t)}{t(N-s)}
\]
which gives
\begin{equation}\label{ciccia}
(1-t)P_t(E_n)\le C \qquad \text{for all }n.
\end{equation}
The existence of a minimizer now follows by the direct method 
the calculus of variations, since the compact embedding of $L^1(Q_R)$ and $H^s(Q_R)$ into $H^t(Q_R)$ and the
estimate \eqref{conBR}
directly follows from \eqref{ciccia}.  
\end{proof}

We now prove Theorem \ref{mainisop}.

\begin{proof}[Proof of Theorem \ref{mainisop}]
If $s=0$ then the claim of the theorem is equivalent to that of the isoperimetric inequality (the fractional
isoperimetric inequality if $t<1$). Thus we consider just the case $s>0$. Again, we shall always write $(t-1)P_t$
meaning that such a functional is equivalent to the classical perimeter if $t=1$ (see Remark \ref{notazione}).

\noindent
Let $E_n$ be a minimizer of \eqref{isoperBR} with $R=n\in\mathbb N$ and $m=1/2$.
We divide $Q_n$ into $(2n)^N$ unit cubes with vertices in $\mathbb Z^N$,  
and we let $\{Q_{i,n}\}_{i=1}^{I_n}$ be the unit cubes with non-negligible 
intersection with $E_n$, that is, $x_{i,n}=|E_n\cap Q_{i,n}|\in (0,1/2]$
for all $i\in\{1,\ldots,I_n\}$, for some~$I_n\in\{1,\ldots,(2n)^N\}$. 

We remark that, from~\eqref{67-0} (and omitting the integrands for
simplicity), we have that
$$ \sum_{i=1}^\infty
P_t(E_n,Q_{i,n})=\sum_{i=1}^\infty
\int_{E_n\cap Q_{i,n}}\int_{\R^N\setminus E_n}
+\int_{Q_{i,n}\setminus E_n}\int_{E_n\setminus Q_{i,n}}
\le
\int_{E_n\cap Q_n}\int_{\R^N\setminus E_n}
+\int_{Q_{n}\setminus E_n}\int_{E_n},$$
which implies that
\begin{equation}\label{678.00}
\sum_{i=1}^\infty
P_t(E_n,Q_{i,n})\le 2\int_{E_n}\int_{\R^N\setminus E_n}
=2P_t(E_n).
\end{equation}
Now, up to reordering the cubes $Q_{i,n}$
we can assume that the sequence $\{x_{i,n}\}_{i=1}^{I_n}$ is non-increasing
in~$i$, and we set $x_{i,n}:=0$ for $i>I_n$. We have that
\begin{equation}\label{condx}
\sum_{i=1}^{\infty}x_{i,n}=\frac 12
\end{equation}
and, recalling \eqref{fracisoploc},
\eqref{conBR} and~\eqref{678.00}, and the fact that $x_{i,n}\le |E_n|=1/2=|Q_{i,n}|/2$, we get
\begin{equation}\label{condxx}
\sum_{i=1}^{\infty}x_{i,n}^\frac{N-t}{N}\le C
\sum_{i=1}^{\infty}(1-t)P_t(E_n,Q_{i,n})
\le 2C\,(1-t)P_t(E_n)\le C,
\end{equation}
up to renaming~$C$.
As in \cite[Lemma 4.2]{GN}, from \eqref{condx} 
and \eqref{condxx} it follows that
\begin{equation}\label{estx}
\sum_{i=k}^{\infty}x_{i,n}\ \le\ C\,k^{-\frac1N}
%\qquad\forall k\in\{1,\ldots,I_n\}\,,
\end{equation}
for all $k\in\mathbb N$,
where $C$ depends only on $(N,s,t)$. 

Up to extracting a subsequence
(using either a diagonal process or Tychonoff Theorem), we can suppose that  
$x_{i,n}\to \alpha_i\in [0,1/2]$ as $n\to +\infty$ for every $i\in\mathbb N$, so that by \eqref{condx} and
\eqref{estx} we have
\begin{equation}\label{eqstella}
\sum_i \alpha_i=\frac 12\, .
\end{equation}
Fix now $z_{i,n} \in Q_{i,n}$. Up to extracting a further subsequence, we can suppose that $d(z_{i,n},z_{j,n}) \to
c_{ij} \in [0,+\infty]$, and (recalling~\eqref{conBR})
that there exists $G_i\subseteq\R^N$ such that
\begin{equation}\label{limit}
\left(E_{n}-z_{i,n}\right) \to G_i \quad 
\textrm{ in the } L^1_{\rm loc}\textrm{-convergence}
\end{equation}
for every $i\in\mathbb N$. 
We say that $i \sim j$ if $c_{ij} < +\infty$ and we denote by $[i]$ the equivalence class of $i$. Notice that $G_i$
equals $G_j$ up to a translation, if $i\sim j$. Let
$\mathcal A:=\{[i]: i\in\N\}$.
We claim that
\begin{equation}\label{limitbis}
\sum_{[i]\in\mathcal A} P_t(G_i)\le \liminf_{n\to +\infty} P_t(E_n)
\qquad \text{and} \qquad
\sum_{[i]\in\mathcal A} P_s(G_i)\le \liminf_{n\to +\infty} P_s(E_n)\,.
\end{equation}
To prove it, we first fix~$M\in\N$ and~$R>0$. 
We take different equivalent classes~$i_1,\dots,i_M$
and we notice that if~$i_k\ne i_j$ then the set~$z_{i_k,n}+Q_R$
is drifting far apart from~$z_{i_j,n}+Q_R$, and so
$$\lim_{n\to +\infty} 
\int_{z_{i_k,n}+Q_R}\int_{z_{i_j,n}+Q_R}\frac{dx\,dy}{|x-y|^{N+t}}=0.$$
Accordingly, by~\eqref{U35},
\eqref{limit}
and the lower semicontinuity of the perimeter,
\begin{eqnarray*}\sum_{i=1}^M P_t(G_i, Q_R)
&\le& \liminf_{n\to +\infty} \sum_{k=1}^M P_t(E_n, (z_{i_k,n}+Q_R))
\\&\le& \liminf_{n\to +\infty} P_t\left(E_n, \bigcup_{k=1}^N (z_{i_k,n}+Q_R)\right)
+ 2\sum_{{1\le k,j\le M}\atop{i_k\neq i_j}} 
\int_{z_{i_k,n}+Q_R}\int_{z_{i_j,n}+Q_R}\frac{dx\,dy}{|x-y|^{N+t}}
\\ &\le& \liminf_{n\to +\infty} P_t(E_n).\end{eqnarray*}
By sending first~$R\to +\infty$ and then~$M\to +\infty$, this yields~\eqref{limitbis}.

Now we claim that 
\begin{equation}\label{volume}
\sum_{[i]\in\mathcal A} |G_i|=\frac 12 .
\end{equation}
Indeed, for every $i\in\mathbb N$ and $R>0$ we have
\[
|G_i|\ge |G_i\cap Q_R| = \lim_{n\to +\infty} |(E_n-z_{i,n})\cap Q_R|.
\]
If $j$ is such that $j \sim i$ and $c_{ij} \le \frac{R}{2}$, 
possibly enlarging $R$ we have 
$Q_{j,n}- z_{i,n} \subset Q_R$ for all $n\in\mathbb N$, so that
\begin{eqnarray*}&& |(E_n-z_{i,n})\cap Q_R| =
\sum_{j=1}^{I_n}|(E_n-z_{i,n})\cap Q_R\cap (Q_{j,n}-z_{i,n})|
\\ &&\qquad\geq\sum_{j:\,c_{ij} \leq \frac{R}{2}} 
|(E_n-z_{i,n})\cap Q_R\cap (Q_{j,n}-z_{i,n})|
=\sum_{j:\,c_{ij} \leq \frac{R}{2}} 
|(E_n-z_{i,n})\cap (Q_{j,n}-z_{i,n})|\\ &&\qquad
=\sum_{j:\,c_{ij} \leq \frac{R}{2}} 
|E_n\cap Q_{j,n}|,\end{eqnarray*}
and so
\[
|G_i|
\ge\lim_{n\to +\infty} \left|\left(E_{n} -z_{i,n}\right) \cap Q_R\right|\geq 
\lim_{n \to +\infty} 
\sum_{j:\,c_{ij} \leq \frac{R}{2}} |E_{n} \cap Q_{j,n}|=\sum_{j:\,c_{ij} \leq \frac{R}{2}} \alpha_j.
\]
Letting $R\to +\infty$ we then have
\[
|G_i| \geq \sum_{j:\,i\sim j} \alpha_j=\sum_{j\in[i]}\alpha_j\,, 
\]
hence, recalling \eqref{eqstella},
\[
\sum_{[i]\in\mathcal A} |G_i| \ge \frac 12 ,
\]
thus proving \eqref{volume} (since the other inequality is trivial).

We now claim that 
\begin{equation}\label{limittris}
\sum_{[i]\in\mathcal A} P_s(G_i)\ge \limsup_{n\to +\infty} P_s(E_n).
\end{equation}
Indeed, by~\eqref{volume} we have that
for any $\eps>0$ there exist $R,\,\ell$ such that there exist $\ell$ distinct equivalence classes
$[i_1],\dots,[i_\ell]\in\mathcal A$ such that 
\begin{equation}\label{7899}
\frac 12 -\eps\le 
\sum_{k=1}^\ell|G_{i_k}\cap B_R|= \lim_{n\to +\infty}
\sum_{k=1}^\ell \left|\left(E_{n} -z_{i_k,n}\right) \cap B_R\right|.
\end{equation}
For $\rho>0$ we let
\[
E^\rho_{n,1}=E_n\cap \bigcup_{k=1}^\ell \left(z_{i_k,n}+B_\rho\right)
\qquad E^\rho_{n,2}=E_n\setminus E_{n,1}\,.
\]
For $n$ sufficiently large we have that the balls~$z_{i_k,n}+B_R$
are disjoint (since the $z_{i_k,n}$ are drifting far away from
each other, being each~$i_k$ in a different equivalence class). Therefore~\eqref{7899}
gives that
\begin{equation}\label{as2}
|E^R_{n,1}|\ge \frac 12 -2\eps \qquad\text{and}\qquad
|E^R_{n,2}|\le 2\eps
\end{equation}
if~$n$ is large enough.
We claim that 
\begin{equation}\label{eqdiambis}
\int_{E^{\bar\rho}_{n,1}}\int_{E^{\bar\rho}_{n,2}}\frac{dx\,dy}{|x-y|^{N+s}}
\le \frac{c_N}{s(1-s)}|E^{\bar\rho}_{n,2}|^\frac{N-s}{N}
\qquad\text{for some }\bar\rho\in \left[R,R+(2\delta)^{-\frac 1N}\right],
\end{equation}
where the constants $c_N,\,\delta$ 
depend only on $N$.

Indeed, if this is not the case, we would have that
\begin{equation}\label{eqdiambis2}
\begin{split}
&|E^{\rho}_{n,2}|>0 \ {\mbox{ and}}\\
&\int_{E^{\rho}_{n,1}}\int_{E^{\rho}_{n,2}}\frac{dx\,dy}{|x-y|^{N+s}}>
\frac{c_N}{s(1-s)}|E^{\rho}_{n,2}|^\frac{N-s}{N}
\qquad\text{for every }\rho\in \left[R,R+(2\delta)^{-\frac 1N}\right].\end{split}\end{equation}
So we let 
$$ \mu(\rho):=|E^\rho_{n,1}|=|E_n|-|E^\rho_{n,2}|=\frac12-|E^\rho_{n,2}|$$
and we obtain
\begin{eqnarray}\nonumber
\frac{c_N}{s(1-s)}\left(\frac 12-\mu(\rho)\right)^\frac{N-s}{N}
&<& \int_{E^\rho_{n,1}}\int_{E^\rho_{n,2}}\frac{dx\,dy}{|x-y|^{N+s}}
\\\label{lobel}
&\le& \int_{E^\rho_{n,1}}\int_{\R^N\setminus\bigcup_{k=1}^\ell \left(z_{i_k,n}+B_\rho\right)}
\frac{dx\,dy}{|x-y|^{N+s}}
\\\nonumber
&\le& \frac{N\omega_N}{s}\int_{E^\rho_{n,1}}\frac{dx}{(\rho-|x-z_{i_{k(x)},n}|)^s}
\\\nonumber
&=&\frac{N\omega_N}{s}\int_0^\rho\frac{\mu'(z)}{(\rho-z)^s}\,dz\,
\end{eqnarray}
for all $\rho\in \left[R,R+(2\delta)^{-\frac 1N}\right]$,
where $k(x)\in\mathbb N$ is such that $x\in z_{i_{k(x)},n}+B_\rho$.

{F}rom \eqref{lobel} and Lemma~\ref{ID} (used here with~$m:=1/2$
and~$\bar\rho:=R$), we obtain that
$\mu(\rho)=1/2$ (and so~$|E^\rho_{n,2}|=0$) for $\rho=R+(2\delta)^{-\frac 1N}$,
which leads to a contradiction with~\eqref{eqdiambis2}. 
We thus proved \eqref{eqdiambis}. Notice that inequality
\eqref{eqdiambis} holds also with $t$ instead of $s$.
So, by~\eqref{eqdiambis}
and the fact that $|E^{\bar\rho}_{n,2}|\le 2\eps$
(recall~\eqref{as2}), we obtain that
\begin{equation}\label{eqdiambis.1}
\int_{E^{\bar\rho}_{n,1}}\int_{E^{\bar\rho}_{n,2}}\frac{dx\,dy}{|x-y|^{N+s}}
\le C\eps^{\frac{N-s}{N}}
\ {\mbox{ and }} \
\int_{E^{\bar\rho}_{n,1}}\int_{E^{\bar\rho}_{n,2}}\frac{dx\,dy}{|x-y|^{N+t}}
\le C\eps^{\frac{N-t}{N}},
\end{equation}
for some~$C>0$, possibly depending on~$n$, $s$ and~$t$.

{}From this, \eqref{eqsum} and \eqref{conBR} we obtain
\begin{equation}\label{stoma0}\begin{split}
P_t(E^{\bar\rho}_{n,1})+
P_t(E^{\bar\rho}_{n,2})
\, &=P_t(E_n)
+2\int_{E^{\bar\rho}_{n,1}}\int_{E^{\bar\rho}_{n,2}}\frac{dx\,dy}{|x-y|^{N+t}}\\
&\le P_t(E_n)+C\eps^\frac{N-t}{N}\le C\,.
\end{split}\end{equation}
Now, by \eqref{basicestimates}, we get
$$ P_s(E^{\bar\rho}_{n,2})\le 
C\,|E^{\bar\rho}_{n,2}|^{1-\frac st}P_t(E_{n,2}^{\bar\rho})^{\frac st}\,,$$
up to renaming~$C$.
Using this, \eqref{stoma0} and then~\eqref{as2} once more, 
and possibly renaming~$C$ again, we conclude that
\begin{eqnarray*}
P_s(E^{\bar\rho}_{n,2}) &\le& 
C\,|E^{\bar\rho}_{n,2}|^{1-\frac st}
\Big(P_t(E_{n,1}^{\bar\rho})+P_t(E_{n,2}^{\bar\rho})\Big)^{\frac st}
\\ &\le& C\, |E^{\bar\rho}_{n,2}|^{1-\frac st}\\
&\le& C \eps^{1-\frac st}\,.\end{eqnarray*}
Consequently, using~\eqref{eqsum} and \eqref{eqdiambis.1}, we conclude that
\begin{equation}\label{stoma}
\begin{aligned}
P_s(E^{\bar\rho}_{n,1})&=P_s(E_{n})-P_s(E^{\bar\rho}_{n,2})+2\int_{E^{\bar\rho}_{n,1}}\int_{E^{\bar\rho}_{n,2}}
\frac {dx\,dy}{|x-y|^{N+s}}\\
&\ge P_s(E_n)-C\eps^{1-\frac st}.
\end{aligned}
\end{equation}

% 
% \begin{equation}\label{stoma}\begin{split}
% P_s(E^{\bar\rho}_{n,1})\, &=P_s(E_n)-P_s(E^{\bar\rho}_{n,2})-
% 2\int_{E^{\bar\rho}_{n,1}}\int_{E^{\bar\rho}_{n,2}}\frac{dx\,dy}{|x-y|^{N+s}}\\
% &\ge P_s(E_n)-\frac{2 c_N}{s(1-s)}\eps^\frac{N-s}{N}\,.
% \end{split}\end{equation}
Also, from~\eqref{conBR}, \eqref{limit}
and the compact embedding of $H^\frac{t}{2}$ into $H^\frac{s}{2}$
(see \cite[Section 7]{DPV12}), we see that
\begin{equation}\label{N.102}
\lim_{n\to +\infty} P_s\left((E_n-z_{i_k,n})\cap B_{\bar\rho}\right)
=P_s(G_{i_k}\cap B_{\bar\rho}).\end{equation}

%In particular, from \eqref{basicestimates} (or \eqref{basicestimate} if $t=1$)
%we obtain
%\[
%P_s(E_{n,2})\le C (1-t)P_t(E_n)\eps^{1-\frac st}\le C \eps^{1-\frac st}. 
%\]
%Hence,
%\[
%P_s(E_{n,1})\ge P_s(E_n)-P_s(E_{n,2})\ge P_s(E_n)-C \eps^{1-\frac st}.
%\]

Now we recall that if $K$ is a convex set, then $P(E\cap K)\le P_t(E)$ (see for instance \cite[Lemma
$B.1$]{labandadei5}). Together with \eqref{R2} and~\eqref{stoma}, this implies
\begin{eqnarray*}
\sum_{[i]\in\mathcal A}P_s(G_i)&\ge& \sum_{k=1}^\ell P_s(G_{i_k}\cap 
B_{\bar\rho})
\\
&=& \lim_{n\to +\infty}\sum_{k=1}^\ell P_s\left((E_n-z_{i_k,n})\cap B_{\bar\rho}\right)
\\
&\ge& \lim_{n\to +\infty} P_s(E_{n,1}^{\bar\rho})
\\
&\ge&
\limsup_{n\to +\infty} P_s(E_n) - C(N,s,t)\eps^\frac{t-s}{t},
\end{eqnarray*}
which gives \eqref{limittris} by letting $\eps\to 0^+$.

{}From \eqref{limitbis} and \eqref{limittris} we obtain that
\begin{equation}\label{pag3}
\frac{\sum_{[i]\in\mathcal A} (1-t)P_t(G_i)}{\left(s\sum_{[i]\in\mathcal A} P_s(G_i)\right)^\frac{N-t}{N-s}}
\ \le\ \liminf_{n\to +\infty}\frac{(1-t)P_t(E_n)}{(sP_s(E_n))^\frac{N-t}{N-s}}\,.
\end{equation}

Let us now prove that the there exists $j$ such that
\begin{equation}\label{quasifine}
 \frac{(1-t)P_t(G_j)}{(sP_s(G_j))^\frac{N-t}{N-s}}
\le\frac{\sum_{[i]\in\mathcal A}
(1-t)P_t(G_i)}{\left(s\sum_{[i]\in\mathcal A} P_s(G_i)\right)^\frac{N-t}{N-s}}=:S.
\end{equation}
Indeed, if it is not the case, we get
\[
\begin{aligned}
 S&=\frac{\sum_{[i]\in\mathcal A} (1-t)P_t(G_i)}{\left(s\sum_{[i]\in\mathcal A}
P_s(G_i)\right)^\frac{N-t}{N-s}} 
= \frac{\sum_{[i]\in\mathcal A}
\left(\frac{(1-t)P_t(G_i)}{(sP_s(G_i))^\frac{N-t}{N-s}}\right)(sP_s(G_i))^\frac{N-t}{N-s}}{\left(s\sum_{[i]\in\mathcal A}
P_s(G_i)\right)^\frac{N-t}{N-s}}\\
&>S \frac{\sum_{[i]\in\mathcal A} (sP_s(G_i))^\frac{N-t}{N-s}}{\left(s\sum_{[i]\in\mathcal A}
P_s(G_i)\right)^\frac{N-t}{N-s}}\ge S,
\end{aligned}
 \]
which is impossible. To get the last estimate we used the 
elementary inequality $\left(\sum_i
c_i\right)^\alpha\le\sum_i c_i^\alpha$ 
which holds true for $c_i\ge0$ and $\alpha\in(0,1)$. %(see~\eqref{ele0}). 

Now, let $j$ be the index
satisfying
\eqref{quasifine}. 
% Then we have
% \[
% \frac{ P_t(G_j)}{P_s(G_j)^{\frac{N-t}{N-s}}} \le \sum_{[i]\in\mathcal A} \frac{P_t(G_i)}{\left(\sum_i
% P_s(G_i)\right)^\frac{N-t}{N-s}}.
% \]
Then, by \eqref{pag3} we get
\begin{equation}\label{8do}
\frac{(1-t)P_t(G_j)}{(sP_s(G_j))^\frac{N-t}{N-s}} \le \liminf_{n\to\infty}
\frac{(1-t)P_t(E_n)}{(sP_s(E_n))^\frac{N-t}{N-s}}\,.
\end{equation}
Then, given any set~$E$, fixed any~$\epsilon>0$, we intersecate~$E$
with a big ball~$B_{R_\epsilon}$ in such a way that
$$ \frac{(1-t)P_t(E\cap B_{R_\epsilon})}{
(sP_s(E\cap B_{R_\epsilon}))^\frac{N-t}{N-s}} \le \frac{(1-t)P_t(E)}{
(sP_s(E))^\frac{N-t}{N-s}}+\epsilon.$$ Then, by the minimality of~$E_n$,
$$ \frac{(1-t)P_t(E\cap B_{R_\epsilon})}{
(sP_s(E\cap B_{R_\epsilon}))^\frac{N-t}{N-s}}\ge
\frac{(1-t)P_t(E_n)}{
(sP_s(E_n))^\frac{N-t}{N-s}}$$
for any~$n\ge n_\epsilon$.
Thus, by~\eqref{8do},
$$ \frac{(1-t)P_t(E)}{
(sP_s(E))^\frac{N-t}{N-s}} +\epsilon\ge
\liminf_{n\to\infty}
\frac{(1-t)P_t(E_n)}{(sP_s(E_n))^\frac{N-t}{N-s}}\ge
\frac{(1-t)P_t(G_j)}{(sP_s(G_j))^\frac{N-t}{N-s}}.$$
By sending~$\epsilon\searrow0$ we see that~$G_j$ is the desired minimizer,
which concludes the proof.
% 
% Let now $i_j$ be a minimizing sequence, that is,
% \[
% \inf_{[i]}\frac{P_t(\widetilde G_i)}{P_s(\widetilde G_i)^\frac{N-t}{N-s}}=
% \lim_{j\to+\infty}\frac{P_t(\widetilde G_{i_j})}{P_s(\widetilde G_{i_j})^\frac{N-t}{N-s}}.
% \]
% Up to passing a subsequence (still denoted by $i_j$) there exists a
% set $\widetilde G$, with $|\widetilde G|=1$ such that 
% \[
% \lim_{j\to+\infty}\widetilde G_{i_j}= \widetilde G \quad 
% \textrm{ in the } L^1_{\rm loc}\textrm{-convergence}.
% \]
% We claim that $\widetilde G$ is a minimizer of \eqref{isoper}.
% 
% Indeed, let $F$ be a set of finite $t$-perimeter. 
% Without loss of generality we may assume 
% that $|F|=1/2$ and ${\rm diam}(F)<+\infty$. 
% In particular $F\subset B_{R}$ for $R$
% large enough, so that
% \[
% \frac{P_t(\widetilde G)}{P_s(\widetilde G)^\frac{N-1}{N-s}}\le 
% \frac{\sum_{[i]} P_t(G_i)}{\left(\sum_{[i]} P_s(G_i)\right)^\frac{N-1}{N-s}}
% =\lim_{n\to +\infty}\frac{P_t(E_n)}{P_s(E_n)^\frac{N-1}{N-s}}
% \le \frac{P_t(F)}{P_s(F)^\frac{N-1}{N-s}}\,,
% \] 
% which shows that the set $\widetilde G$ is a minimizer of \eqref{isoper}.
\end{proof}

\begin{proposition}\label{regfrac}
Let $F$ be a minimizer of \eqref{isoper}. Then $F$ is a 
multiplicative $\omega$-minimizer of the $t$-perimeter, that is, 
for any set $E$ such that $F\Delta E\subset B(x,R)$, there holds 
\[
P_t(F,B(x,R))\leq (1+CR^{t-s})\,P_t(E,B(x,R))\qquad
\text{for any }R<R_0\,,
\]
where $R_0,\,C$ depend only on $N,\,\delta_0$ and $|F|$.
\end{proposition}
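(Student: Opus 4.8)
The strategy is to run, for the scale-invariant functional $\widetilde\F$, the same scheme used in Corollary~\ref{correg}, now extracting the competing information from the ratio structure. By scale invariance we may assume $|F|=\omega_N$ (and, replacing $F$ by $F^c$ if necessary — which changes neither $\widetilde\F$ nor the local perimeters $P_\a(\cdot,B(x,R))$ — that $|F|<+\infty$). Set $a_0:=(1-t)P_t(F)$ and $b_0:=sP_s(F)$. I would first record that $a_0$ and $b_0$ are bounded above and below by positive constants depending only on $N$ and $\delta_0$: the lower bound on $a_0$ is the fractional isoperimetric inequality \eqref{iso_ineq2}; the upper bound on $a_0$ follows from $\widetilde\F(F)\le\widetilde\F(B)$ together with \eqref{basicestimates}, which gives $a_0^{N(t-s)/t}\le C(N,\delta_0)$; and then $\widetilde\F(F)\le\widetilde\F(B)$ bounds $b_0$ from below while \eqref{basicestimates} bounds it from above.

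Now fix $x\in\partial^m F$, $R<R_0$ and a competitor $E$ with $F\Delta E\subset B:=B(x,R)$ (the case $x\notin\partial^m F$ being analogous). As in \eqref{A1}, the inclusion $F\Delta E\subset B$ gives $P_\a(F)-P_\a(E)=P_\a(F,B)-P_\a(E,B)$ for $\a\in\{s,t\}$ and $P_t(F\Delta E)\le P_t(F,B)+P_t(E,B)$, while $|F\Delta E|\le\omega_N R^N$; in particular $|E|\ge\omega_N/2$ for $R_0$ small, so $a_E:=(1-t)P_t(E)$ and $b_E:=sP_s(E)$ are again comparable to constants. If $P_t(E,B)\ge P_t(F,B)$ there is nothing to prove; otherwise $a_0>a_E$, and rewriting $\widetilde\F(F)\le\widetilde\F(E)$ as $(a_0/a_E)^{N-s}\le(b_0/b_E)^{N-t}$ shows that $b_0>b_E$, after which, using $(1+u)^{(N-t)/(N-s)}\le 1+u$ for $u\ge0$, one gets the clean estimate
\[
\frac{a_0-a_E}{a_E}\ \le\ \frac{b_0-b_E}{b_E}\,.
\]
Hence $b_0-b_E=s[P_s(F)-P_s(E)]\le sP_s(F\Delta E)$ by \eqref{P21}, and applying \eqref{stimathree} to the rescaled set $R^{-1}(F\Delta E)$ and scaling back, together with $(1-t)P_t(F\Delta E)\le(1-t)[P_t(F,B)+P_t(E,B)]$, yields
\[
sP_s(F\Delta E)\ \le\ c_0\,R^{t-s}(1-t)\bigl[P_t(F,B)+P_t(E,B)\bigr]+c_0\,R^{N-s}.
\]
Combining the last three displays with the bounds on $a_E,b_E$, with $a_0-a_E=(1-t)[P_t(F,B)-P_t(E,B)]$, and rearranging (using $R^{t-s}\le1/2$), I obtain
\[
(1-t)P_t(F,B)\ \le\ \bigl(1+C R^{t-s}\bigr)(1-t)P_t(E,B)+C R^{N-s}.
\]

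It remains to remove the additive term, and for this I would use the lower density estimate $(1-t)P_t(F,B(x,R))\ge c\,R^{N-t}$ for $x\in\partial^m F$ and $R<R_0$. Indeed, the displayed inequality applied with $E=F\setminus B(x,r)$ and with $E=F\cup B(x,r)$ shows that $F$ is an $\omega$-minimizer of the $t$-perimeter, so the density estimates for almost-minimal fractional perimeters (see \cite{CG11}, or the integro-differential argument behind Lemma~\ref{Aux} together with the relative fractional isoperimetric inequality, Lemma~\ref{lemfracisoploc}, applied to $F$ or to $F^c$) give $|F\cap B(x,r)|\ge c_0 r^N$ and $|F^c\cap B(x,r)|\ge c_0 r^N$, whence the claimed perimeter bound. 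Then $C R^{N-s}=C R^{t-s}\cdot R^{N-t}\le (C/c)\,R^{t-s}(1-t)P_t(F,B)$; moving this term to the left and rearranging gives $P_t(F,B(x,R))\le(1+C R^{t-s})P_t(E,B(x,R))$ for $R<R_0$. The cases $s=0$ and $t=1$ are identical, with $(1-t)P_t$ and $sP_s$ replaced by $N\omega_N P$ and $N\omega_N|\cdot|$ as in Remark~\ref{notazione} and with \eqref{basicestimate} in place of \eqref{basicestimates}. The genuinely delicate point is this last step: the density-estimate machinery of Section~\ref{section2} was tailored to the volume-constrained functional $\F$, and what makes it usable here is precisely the observation that the computations above already certify that a normalized minimizer of the ratio functional $\widetilde\F$ is an $\omega$-minimizer of the plain fractional perimeter, reducing the density bound to a statement about $\omega$-minimal fractional boundaries.
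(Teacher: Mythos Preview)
Your argument is correct, but it takes a detour the paper avoids. Up to the linearization step the two proofs are essentially the same: the paper writes minimality as $P_t(E)\ge P_t(F)\,(P_s(E)/P_s(F))^{(N-t)/(N-s)}$ and uses $1-r^\alpha\le|1-r|$, while you write $(a_0/a_E)^{N-s}\le(b_0/b_E)^{N-t}$ and use $(1+u)^{\alpha}\le1+u$; both lead to
\[
(1-t)\bigl[P_t(F,B)-P_t(E,B)\bigr]\ \le\ C\, sP_s(F\Delta E),
\]
with a constant depending only on $N,\delta_0,|F|$ (your WLOG $a_E\le a_0\le C$ and the lower bound $b_E\ge c$ for $|E|\ge\omega_N/2$ are correct and suffice).

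The difference is in how $sP_s(F\Delta E)$ is controlled. You apply the two-term bound \eqref{stimathree} (rescaled), which produces the additive $CR^{N-s}$, and then you have to absorb it via a perimeter lower density $P_t(F,B(x,R))\ge cR^{N-t}$, invoking the density theory for almost-minimizers of the $t$-perimeter. The paper instead combines the interpolation \eqref{basicestimates} with the fractional isoperimetric inequality \eqref{iso_ineq2}, exactly as in \eqref{stimamon}, to get directly
\[
sP_s(F\Delta E)\ \le\ c_0\,|F\Delta E|^{\frac{t-s}{N}}\,(1-t)P_t(F\Delta E)\ \le\ c_0 R^{t-s}(1-t)\bigl[P_t(F,B)+P_t(E,B)\bigr],
\]
with no additive remainder. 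Plugging this in yields the multiplicative $\omega$-minimality in one line, without any appeal to density estimates. So your proof works, but the extra machinery (and the slightly delicate self-reference to $\omega$-minimality to justify the density bound) is unnecessary: the sharper one-term interpolation used in the paper removes the additive term at the source.
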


\begin{proof} First, if~$\alpha\in(0,1)$,
by graphic the functions, one sees that,
for any~$r\ge0$,
\begin{equation}\label{iN.01}
1-r^\alpha\le |1-r|.
\end{equation}
Also, from~\eqref{P21}, we know that
$$ P_s(F)-P_s(E)\le P_s(F\Delta E),$$
for any sets~$E$ and~$F$, and so, by possibly exchanging the roles
of~$E$ and~$F$ we obtain
\begin{equation}\label{iN.02}
|P_s(E)-P_s(F)|\le P_s(F\Delta E)
\end{equation}
Now, letting $E$ be such that $F\Delta E\subset B(x,R)$, using
the minimality of~$F$, \eqref{iN.01}
and~\eqref{iN.02} we see that
\begin{eqnarray*}
P_t(E)&\ge& P_s(E)^\frac{N-t}{N-s} \frac{P_t(F)}{P_s(F)^\frac{N-t}{N-s}}
\\
&=& P_t(F)+\left(\frac{P_s(E)^\frac{N-t}{N-s}}{P_s(F)^\frac{N-t}{N-s}}-1\right) P_t(F)
\\
&\geq& P_t(F)-\left|\frac{P_s(E)}{P_s(F)}-1\right|P_t(F)
\\
&\geq& P_t(F)-\frac{P_t(F)}{P_s(F)}|P_s(E)-P_s(F)|
\\
&\ge& P_t(F)-\frac{P_t(F)}{P_s(F)} P_s(F\Delta E).
\end{eqnarray*}
Hence,
by applying the fractional isoperimetric inequality \eqref{fracisop} to $P_s(F)$,
we obtain that
$$ P_t(E)\ge P_t(F)-C(N,\delta_0)|F|^{-\frac{N-s}{N}}P_s(F\Delta E).$$
As in \eqref{stimamon}, by means of \eqref{basicestimates} and again the fractional isoperimetric inequality
we then get
\begin{eqnarray*}
P_t(E,B(x,R))&\ge& P_t(F,B(x,R))-C(N,\delta_0) |F|^{-\frac{N-s}{N}} |F\Delta E|^\frac{t-s}{N}
P_t(F\Delta E) 
\\
&=& \left( 1-C(N,\delta_0)|F|^{-\frac{N-s}{N}}R^{t-s}\right)P_t(F,B(x,R))\,,
\end{eqnarray*}
which gives
\[
P_t(F,B(x,R))\leq \frac{|F|^{-\frac{N-s}{N}}}{1-C(N,\delta_0)R^{t-s}}\,P_t(E,B(x,R))\,.
\]
\end{proof}

Reasoning as in Section \ref{section4}, from Proposition \eqref{regfrac}
we obtain the following regularity result.

\begin{corollary}\label{corregtris}
There exists $\beta=\beta(N,\delta_0)<1$ such that any minimizer
$F$ of \eqref{isoper} is bounded and has boundary of class $C^{1,\beta}$,
outside of a closed singular set of Hausdorff dimension at most $N-2$
(respectively $N-8$ if $t=1$).
\end{corollary}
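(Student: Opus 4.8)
The strategy is to reproduce, almost verbatim, the regularity analysis of Section~\ref{section4}, whose only structural input is that a minimizer is a multiplicative $\omega$-minimizer of the $t$-perimeter. By the scale invariance of $\widetilde\F$ (see the Remark following~\eqref{isoper}) I would first normalize $|F|=\omega_N$; note that, $F$ being a nontrivial minimizer, $0<sP_s(F)<+\infty$ and $0<(1-t)P_t(F)<+\infty$. Proposition~\ref{regfrac} then says that $F$ enjoys the multiplicative $\omega$-minimality of $P_t$ that was established, for minimizers of~\eqref{Feps}, in Corollary~\ref{correg}. Comparing $F$ with $F\setminus B(x,R)$ and using $(1-t)P_t(F\setminus B(x,R),B(x,R))\le (1-t)P_t(B(x,R))\le c_N R^{N-t}$, I would obtain the upper density bound $(1-t)P_t(F,B(x,R))\le\Theta R^{N-t}$ for $x\in\partial^m F$ and $R<R_0$, exactly as in Lemma~\ref{lemdens}; combining this with the multiplicative inequality and the fractional isoperimetric inequality~\eqref{iso_ineq2}, as in Corollary~\ref{lemadd}, I would upgrade it to additive $\omega$-minimality,
\[
(1-t)P_t(F,B(x,R))\le (1-t)P_t(E,B(x,R))+c_0\,R^{N-s}\qquad\text{for }F\Delta E\subset B(x,R),\ R<R_0,
\]
with $R_0,c_0$ depending only on $N$ and $\delta_0$ (recall $|F|=\omega_N$).

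With additive $\omega$-minimality in hand, the $C^{1,\beta}$ regularity of $\partial F$ follows by feeding Proposition~\ref{proflat} into the standard flatness-improvement iteration of~\cite[Theorem 1]{CG11} (see also~\cite[Corollary 3.5]{labandadei5}), which yields that $\partial F$ is of class $C^{1,\beta}$, $\beta=\beta(N,\delta_0)<1$, outside a closed set of Hausdorff dimension at most $N-2$. When $t=1$, where $(1-t)P_t$ stands for $N\omega_N P$, $F$ is an $\omega$-minimizer of the classical perimeter, and the sharper bound $N-8$ on the singular set follows from the regularity theory for $\omega$-minimizers of the perimeter developed in~\cite{bombieri,tamanini}, as recalled in Remark~\ref{remreg}.

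Finally I would prove that $F$ is bounded, mimicking the first half of the proof of Proposition~\ref{necessario}. The additive $\omega$-minimality of $P_t$ (respectively of $P$, if $t=1$) yields the interior volume density estimate $|F\cap B(x,r)|\ge c_0\,r^N$ for all $x\in\partial^m F$ and $r<r_0$, with $c_0,r_0$ depending only on $N$ and $\delta_0$; this is the exact analogue of Proposition~\ref{densityestimate}, and it is the step I expect to require the most care, since in Section~\ref{section2} it was obtained from the non-optimality criterion of Proposition~\ref{nonoptimality}, which is specific to $\F$ and unavailable here, so it must be produced directly from the $\omega$-minimality of $P_t$ — a standard but not entirely trivial matter, carried out e.g.\ by comparing $F$ with $F\setminus B(x,r)$, applying the fractional isoperimetric inequality to $F\cap B(x,r)$, and invoking the integro-differential Lemma~\ref{ID} as in the proof of Lemma~\ref{Aux}. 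Granting this density estimate, if $F$ were unbounded one could find $x_k\in\partial^m F$ with $|x_k|\to+\infty$ and, along a subsequence, pairwise disjoint balls $B(x_k,r)$ for a fixed $r<r_0$, whence $\omega_N=|F|\ge\sum_k|F\cap B(x_k,r)|\ge\sum_k c_0\,r^N=+\infty$, a contradiction. All the remaining steps are a line-by-line transcription of Section~\ref{section4}.
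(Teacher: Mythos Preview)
Your proposal is correct and follows exactly the route the paper indicates: the paper's entire argument is the one-line ``Reasoning as in Section~\ref{section4}, from Proposition~\ref{regfrac} we obtain the following regularity result,'' and you have faithfully unpacked that into the chain multiplicative $\omega$-minimality $\Rightarrow$ upper density bound $\Rightarrow$ additive $\omega$-minimality $\Rightarrow$ $C^{1,\beta}$ via Proposition~\ref{proflat} (and \cite{bombieri,tamanini} when $t=1$). You are also right to single out the boundedness claim as the one point requiring extra care, since the density lower bound used in Proposition~\ref{necessario} came from the non-optimality criterion, which is unavailable here; deriving it instead from $\omega$-minimality (via comparison with $F\setminus B(x,r)$ and the integro-differential argument of Lemma~\ref{Aux}/Lemma~\ref{ID}, or for $t=1$ from the standard density estimates for $\omega$-minimizers of $P$) is the appropriate fix, and the paper's brief pointer glosses over this.
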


\noindent{\it Proof of Theorem \ref{mainisopintro}.}
The existence claim is a consequence of Theorem~\ref{mainisop}
and the regularity follows from Corollary~\ref{corregtris}.~\qed

\end{document}